\documentclass{amsproc}
\newtheorem{theorem}{Theorem}[section]
\newtheorem{lemma}[theorem]{Lemma}
\newtheorem{corollary}[theorem]{Corollary}
\theoremstyle{definition}
\newtheorem{definition}[theorem]{Definition}
\newtheorem{example}[theorem]{Example}

\theoremstyle{remark}
\newtheorem{remark}[theorem]{Remark}
\numberwithin{equation}{section}

\usepackage{graphicx}
\usepackage{amssymb}
\usepackage{epstopdf}
\usepackage{epsfig}
\usepackage{psfrag}
\DeclareGraphicsRule{.tif}{png}{.png}{`convert #1 `dirname #1`/`basename #1 .tif`.png}

\begin{document}

 \title{The Geometry of $p$-Adic Fractal Strings: A Comparative Survey\footnote{This article is in the Proceeding of the 11th International Conference on \textit{$p$-Adic Functional Analysis}
(Clermont-Ferrand, France, July 2010),
J.\ Araujo, B.\ Diarra and A.\ Escassut, eds., Contemporary Mathematics, American  Mathematical Society, Providence, R.\ I., 2011.}
 } 
\author{Michel L. Lapidus}
\address{Department of Mathematics, University of California, Riverside, CA 92521-0135}
\email{lapidus@math.ucr.edu}
\thanks{The research of the first author (MLL) was partially supported by the US National Science Foundation under the grant DMS-0707524.}

\author {L\~u' H\`ung}
\address{Department of Math \& CS, Hawai`i Pacific University, Honolulu, HI 96813-2785}
\email{hlu@hpu.edu}
\thanks{The research of the second author (LH) was partially supported by the Trustees' Scholarly Endeavor Program at Hawai`i Pacific University.}
\subjclass[2010]{Primary 11M41, 26E30, 28A12, 32P05, 37P20; Secondary 11M06, 11K41, 30G06, 46S10, 47S10, 81Q65.}

\keywords{Fractal geometry, $p$-adic analysis,   $p$-adic fractal strings, zeta functions, complex dimensions, Minkowski dimension,  tubes formulas, $p$-adic self-similar strings, lattice strings, 
Cantor, Euler and Fibonacci strings, nonarchimedean harmonic and functional analysis. }

\begin{abstract}
We give a brief overview  of the theory of complex dimensions of real (archimedean) fractal strings via an illustrative example, the ordinary Cantor string, and a detailed survey of the theory of $p$-adic (nonarchimedean) fractal strings and their complex dimensions. 
Moreover, we present an explicit volume formula for the tubular neighborhood of a $p$-adic fractal string 
$\mathcal{L}_p$, expressed in terms of the underlying complex dimensions.  Special attention will be focused on $p$-adic self-similar strings, in which the nonarchimedean theory takes a more natural form than its archimedean counterpart.  In   contrast with the archimedean setting, all $p$-adic self-similar strings are lattice and hence, their complex dimensions (as well as their zeros) are periodically distributed along finitely many vertical lines. The general theory is illustrated by some simple examples, the nonarchimedean Cantor, Euler, and Fibonacci strings.  Throughout this comparative survey of the archimedean and nonarchimedean theories of fractal (and possibly, self-similar) strings, we discuss analogies and differences between the real and $p$-adic situations. We close this paper by proposing several directions for future research, including seemingly new and challenging problems in $p$-adic (or rather, nonarchimedean) harmonic and functional analysis, as well as spectral theory. 
\end{abstract}

\maketitle

\tableofcontents



\begin{quote}
{\em
Nature is an infinite sphere of which the center is everywhere and the circumference nowhere.}
\hspace{\stretch{1}} Blaise Pascal 
\end{quote}


\section{Introduction} 

 In this survey, we present aspects of a geometric theory of $p$-adic (or nonarchimedean)  fractal strings, that is, bounded open subsets of the $p$-adic line $\mathbb Q_p$ having  a fractal subset of 
$\mathbb Q_p$ for ``boundary''.  This theory, developed by Michel Lapidus and 
L\~u\hspace{-.25em}\raisebox{-.1 em}{'} H\`ung  in [\ref{LapLu1}, \ref{LapLu2}], as well as by those same authors and Machiel van Frankenhuijsen in [\ref{LapLu3}],  extends in a natural way the theory of real (or archimedean)  fractal strings and their complex dimensions developed in  [\ref{L-vF1}, \ref{L-vF2}], and building on
 [\ref{Lap3}, \ref{LapMa}, \ref{LapPo}], for example. Following [\ref{LapLu1}, \ref{LapLu2}, \ref{LapLu3}],  we introduce suitable geometric zeta functions, the poles of which play the role for  $p$-adic fractal strings of the complex dimensions for the standard real fractal strings. Furthermore, we discuss the analogies and the differences between the real and $p$-adic fractal strings. 

More specifically, we recall the definition of $p$-adic self-similar strings introduced in [\ref{LapLu2}]; furthermore, we show (as in [\ref{LapLu2}]) that all $p$-adic self-similar strings are lattice (in a strong sense) and deduce from this fact the simple periodic structure of their complex dimensions. We also discuss the explicit fractal tube formulas obtained in [\ref{LapLu3}], both in the general case of (languid) $p$-adic fractal strings and that of $p$-adic self-similar strings. Throughout this paper, these various results are illustrated in the case of suitable nonarchimedean analogs of the Cantor and the Fibonacci strings (which are both self-similar), as well as in the case of a new (and non self-similar) $p$-adic fractal string, namely, the $p$-adic Euler string introduced in [\ref{LapLu3}]. 
Some particular attention is devoted to the nonarchimedean (or 3-adic) Cantor string (introduced and studied in [\ref{LapLu1}]), an appropriate counterpart of the archimedean Cantor string, whose `metric' boundary is the nonarchimedean (or 3-adic) Cantor set ([\ref{LapLu1}]), a suitable $p$-adic analog of the classic ternary Cantor set. 

We note that $p$-adic (or nonarchimedean) analysis has been used in various areas of mathematics (such as representation theory, number theory and arithmetic geometry), as well as (more speculatively) of mathematical and theoretical physics (such as quantum mechanics, relativity theory, quantum field theory, statistical and condensed matter physics, string theory and cosmology); see, e.g., [\ref{Drag}, \ref{Dykkv}, \ref{Ulam}, \ref{RTV}, \ref{VVZ}] and the relevant references therein.  In particular, it is believed by some authors that $p$-adic numbers (or, more generally, nonarchimedean fields) can be used to describe the geometry of spacetime at very high energies and hence, very small scales (i.e., below the Planck or the string scale); see, e.g., [\ref{Vol}].  Furthermore, several physicists and mathematical physicists have suggested that the small scale structure of spacetime may be fractal; see, e.g., [\ref{GibHaw}, \ref{HawIs}, \ref{Lap2}, \ref{Not}, \ref{WheFo}].

On the other hand, in the recent book [\ref{Lap2}], it has been suggested that fractal strings and their quantization, fractal membranes, may be related to aspects of string theory and that $p$-adic (and possibly, ad\`elic) analogs of these notions would be useful in this context in order to better understand the underlying (noncommutative) spacetimes and their moduli spaces ([\ref{Lap2}, \ref{LapNe}]).  The theory of $p$-adic fractal strings, once suitably `quantized',  may be helpful in further developing some of these ideas    and eventually providing a framework for unifying the real and $p$-adic fractal strings and membranes.  

The rest of this paper is organized as follows:

In \S\ref{CSS}, we give a brief survey of the main pertinent properties of the Cantor string and set, both in the real (or archimedean) and the 3-adic (or nonarchimedean) situations. This will serve, in particular, as a pedagogical introduction to the general theory of real and $p$-adic  fractal (and possibly self-similar) strings. 

In \S \ref{pfs}, we recall the definition of an arbitrary $p$-adic fractal string, along with some of the key pertaining notions (geometric zeta function and complex dimensions, as well as Minkowski dimension and content). We also discuss the  more technical question of how to suitably define and calculate the volume (i.e., $p$-adic Haar measure) of the `inner' $\varepsilon$-neighborhood (or inner tube) of a $p$-adic fractal string (\S\ref{inner tube}).\footnote{Most of the proofs given in this paper will be concentrated in \S\ref{inner tube} because they truly depend on the nonarchimedean (specifically, $p$-adic) nature of the underlying geometry.}
In \S\ref{Mdimension}, we then use these results to express this volume as an infinite sum over the underlying complex dimensions, thereby obtaining a nonarchimedean analog of the `fractal tube formula' of [\ref{L-vF1}, \ref{L-vF2}]. We illustrate this formula in \S\ref{explicit tf} by providing (as well as deriving via a direct computation) the fractal tube formula for the $p$-adic Euler string, the definition of which is given in \S\ref{Euler string}. 

In \S\ref{sss}, we focus on the important special class of $p$-adic self-similar strings, of which the 3-adic Cantor string and the 2-adic Fibonacci string are among the simplest examples. After having explained their construction in \S\ref{construction} via an iterated function (or self-similar) system, we provide 
(in \S\ref{gzf sss}--\ref{zeros and poles}) a detailed study of their geometric zeta functions and complex dimensions. It turns out that due to the discreteness of the valuation group of $\mathbb Q_p,$ all $p$-adic self-similar strings are `strongly lattice' (\S\ref{Periodicity}), from which it follows that their complex dimensions (along with the zeros of their geometric zeta functions) are periodically distributed along finitely 
many vertical lines (\S\ref{zeros and poles}).\footnote{This is not necessarily the case for a general `lattice' archimedean self-similar string. Moreover, a generic archimedean self-similar string is `nonlattice'. It follows that  the theory of $p$-adic self-similar fractal strings is more natural as well as simpler than its archimedean counterpart.} 
It follows that (under mild assumptions) the fractal tube formula of a $p$-adic self-similar string involves finitely many (multiplicative) periodic functions, one for each `line' of complex dimensions. We describe such explicit tube formulas in some detail in \S\ref{exact tf} and also obtain in \S\ref{amc} an explicit expression for the average Minkowski content of a $p$-adic self-similar string (and the associated nonarchimedean self-similar set).

Throughout this expository paper and comparative survey, we illustrate some of the main results by discussing the examples of the nonarchimedean Cantor, Euler and Fibonacci strings. Moreover, we point out the main analogies and differences between the archimedean and nonarchimedean theories of fractal strings. 

Finally, in \S\ref{conclusion}, we conclude this paper by proposing several possible research directions for future work in this new field. This includes, in particular, a possible extension of the nonarchimedean theory of fractal strings and their tube formulas to Berkovich spaces, along with seemingly new and quite challenging problems in nonarchimedean spectral, harmonic and functional analysis.

\section{Archimedean vs. Nonarchimedean Cantor Set and String}\label{CSS}

In this section, we briefly recall, for a simple but important example, some of the main notions pertaining to the  theory of real and $p$-adic fractal strings. Namely, the geometric zeta function, the complex dimensions, and the `fractal tube formula' that expresses the volume of the inner $\varepsilon$-neigborhoods of a suitable boundary of the string as a `fractal power series' with exponents involving the underlying complex dimensions. 

In the archimedean case, the classic example is the real Cantor string (\S\ref{CS}), whose boundary (or associated self-similar set) is the ternary Cantor set (\S\ref{C}); cf. [\ref{L-vF2}, Ch. 1].
The nonarchimedean counterpart of the real Cantor set or string is the 3-adic Cantor set or string, discussed in \S\ref{C3} or \S\ref{CS3}, respectively (and both introduced in [\ref{LapLu1}]). In particular, the 3-adic Cantor set is the nonarchimedean self-similar set naturally associated with the 3-adic Cantor string (which is a special case of $p$-adic  self-similar string, in the sense of [\ref{LapLu2}] and \S\ref{sss} below).

Finally, a comparative study of the archimedean and nonarchimedean Cantor strings and their respective fractal tube formulas is provided in \S\ref{comparison}. It will help us preview some of the main analogies and differences between the real and $p$-adic theory of fractal (and possibly, self-similar) strings, as further discussed and developed in \S\ref{pfs} and \S\ref{sss}.

\subsection{Archimedean (or Ternary) Cantor Set}\label{C}
The classical archimedean (or ternary) Cantor  set, denoted by $\mathcal{C}$, is the set that remains after iteratively removing the open middle third subinterval(s) from the closed unit interval $C_0=[0,1].$ 
The construction is illustrated in Figure 1. There, for each $n\geq 0,$ $C_n$ is the compact set defined as the union of $2^n$ compact intervals of length $3^{-n}$ and endpoints the ternary points of `scale' $n$ (i.e., of the form $\frac{3k+j}{3^n},$ with $k\in \mathbb N$ and $j=1,2$).
Hence, the archimedean Cantor  set $\mathcal{C}$ is equal to $\bigcap_{n=0}^{\infty} C_n$. 
\begin{figure}[h!]
\psfrag{dots}{$\vdots$}
\psfrag{C0}{$C_0$}
\psfrag{C1}{$C_1$}
\psfrag{C2}{$C_2$}
\psfrag{Cn}{$C_n$}
\raisebox{-1cm}
{\psfig{figure=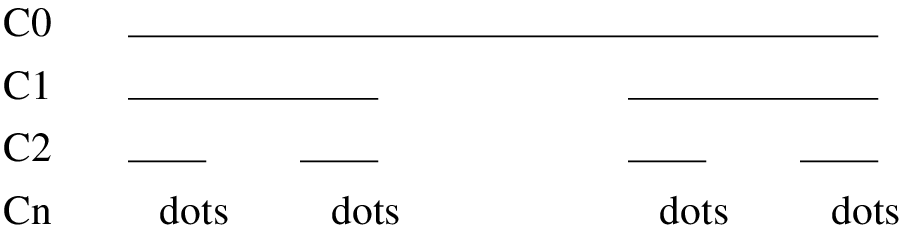, height=3.25 cm}}
\caption{Construction of the archimedean Cantor set $\mathcal{C}=\bigcap_{n=0}^{\infty} C_n$.}
\label{Cantor construction}
\end{figure}

For comparison with our results in the nonarchimedean case, we state 
the following well-known results (see, e.g., [\ref{Fal}, Ch. 9] and [\ref{GJ}, p. 50]):

\begin{theorem}\label{fixed point}
The archimedean Cantor  set $\mathcal C$ is self-similar.  More specifically, it is the unique nonempty, compact invariant set  in $[0,1]\subset \mathbb R$ generated by the iterated function system (IFS) $\mathbf{\Phi}=\{\Phi_1, \Phi_2\}$ of affine similarity contraction mappings of $[0, 1]$ into itself, where
\[\Phi_1(x)=\frac{x}{3} \quad \mbox{and} \quad  \Phi_2(x)= \frac{x}{3} + \frac{2}{3}.\]
That is, 
\[\mathcal{C}= \Phi_1(\mathcal{C})\cup \Phi_2(\mathcal{C}).\]
\end{theorem}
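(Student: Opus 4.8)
The plan is to invoke the contraction mapping principle (Banach's fixed point theorem) on a suitable complete metric space of sets, following the classical argument of Hutchinson. First I would let $\mathcal{K}$ denote the collection of all nonempty compact subsets of $[0,1]$, equipped with the Hausdorff metric $d_H$; it is a standard fact that $(\mathcal{K}, d_H)$ is a complete metric space. On $\mathcal{K}$, define the map $F$ by $F(A) = \Phi_1(A) \cup \Phi_2(A)$. Since each $\Phi_i$ is continuous and maps $[0,1]$ into $[0,1]$, and since a finite union of nonempty compact sets is again nonempty and compact, $F$ indeed maps $\mathcal{K}$ into itself; in particular, any fixed point of $F$ automatically lies in $\mathcal{K}$, hence is a nonempty compact subset of $[0,1]$.

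Next I would show that $F$ is a contraction on $(\mathcal{K}, d_H)$ with ratio $1/3$. This rests on two elementary properties of the Hausdorff metric: (i) if $\Phi$ is $r$-Lipschitz on $[0,1]$ (here $r = 1/3$ for each $\Phi_i$, since $\Phi_i(x) = x/3 + c_i$), then $d_H(\Phi(A), \Phi(B)) \le r\, d_H(A,B)$; and (ii) for any $A_1, A_2, B_1, B_2 \in \mathcal{K}$,
\[
d_H(A_1 \cup A_2,\, B_1 \cup B_2) \le \max\{\, d_H(A_1, B_1),\, d_H(A_2, B_2)\,\}.
\]
Combining (i) and (ii) yields $d_H(F(A), F(B)) \le \frac{1}{3}\, d_H(A,B)$ for all $A, B \in \mathcal{K}$. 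By Banach's fixed point theorem, $F$ then has a unique fixed point $\mathcal{F} \in \mathcal{K}$ --- that is, a unique nonempty compact set with $\mathcal{F} = \Phi_1(\mathcal{F}) \cup \Phi_2(\mathcal{F})$ --- and moreover $F^n(A_0) \to \mathcal{F}$ in $d_H$ for every $A_0 \in \mathcal{K}$.

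It then remains to identify this abstract fixed point with the classical Cantor set $\mathcal{C} = \bigcap_{n=0}^\infty C_n$. Taking $A_0 = C_0 = [0,1]$, a direct inspection of the construction shows $F(C_n) = \Phi_1(C_n) \cup \Phi_2(C_n) = C_{n+1}$ for every $n \ge 0$, so that $C_n = F^n(C_0)$ and hence $C_n \to \mathcal{F}$ in the Hausdorff metric. On the other hand, $(C_n)_{n \ge 0}$ is a decreasing sequence of nonempty compact sets, and for such a sequence the Hausdorff limit is exactly the intersection $\bigcap_{n \ge 0} C_n = \mathcal{C}$. By uniqueness of limits in $(\mathcal{K}, d_H)$, we conclude $\mathcal{C} = \mathcal{F}$, so $\mathcal{C}$ is the unique nonempty compact set in $[0,1]$ satisfying $\mathcal{C} = \Phi_1(\mathcal{C}) \cup \Phi_2(\mathcal{C})$, as asserted.

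The main obstacle is not conceptual but organizational: assembling cleanly the standard auxiliary facts --- completeness of $(\mathcal{K}, d_H)$, the two Hausdorff-metric estimates (i)--(ii), and the convergence of a decreasing sequence of compacts to its intersection in $d_H$. All of these are well documented (see, e.g., [\ref{Fal}, Ch.\ 9]), so in the actual write-up I would cite them rather than reprove them, devoting the detailed work to the contraction estimate and to the identification step $\mathcal{C} = \mathcal{F}$.
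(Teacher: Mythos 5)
Your proposal is correct and is precisely the standard Hutchinson contraction-mapping argument that the paper itself relies on: the theorem is stated there as a well-known result with a citation to [\ref{Fal}, Ch.\ 9], and the same Contraction Mapping Principle on the space of nonempty compact subsets is what the paper explicitly invokes for the nonarchimedean analogue (Theorem \ref{Hutchinson}). The identification step $C_{n+1} = F(C_n)$ and the convergence of the decreasing sequence $C_n$ to $\bigcap_n C_n$ in the Hausdorff metric are exactly the right way to pin down the fixed point as $\mathcal{C}$.
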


\begin{theorem}\label{expansion}
The archimedean Cantor set is characterized by the ternary expansion of its elements as 
\[
\mathcal{C}=\left\{ \tau \in [0,1] ~ :~\tau=a_0 + a_13^{-1} + a_23^{-2} + \cdots, ~a_j \in \{0,2\},
\forall j \geq 0\right\}.
\]
\end{theorem}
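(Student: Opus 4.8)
The plan is to make precise the natural correspondence between the $2^n$ closed intervals whose union is $C_n$ and the length-$n$ words $a_1 a_2 \cdots a_n$ with each $a_j \in \{0,2\}$, and then to pass to the limit $n\to\infty$. Concretely, I would first prove by induction on $n \ge 0$ that
\[
C_n = \bigcup \left\{ \left[\, \textstyle\sum_{j=1}^n a_j 3^{-j},\ \sum_{j=1}^n a_j 3^{-j} + 3^{-n} \,\right] : a_j \in \{0,2\}\ \mbox{for } 1 \le j \le n \right\}.
\]
The base case $n=0$ is the tautology $C_0 = [0,1]$ (empty sum). For the inductive step, deleting the open middle third of a level-$n$ interval $[\alpha, \alpha + 3^{-n}]$ leaves its left third $[\alpha, \alpha + 3^{-(n+1)}]$ (append the digit $a_{n+1}=0$) and its right third $[\alpha + 2\cdot 3^{-(n+1)}, \alpha + 3^{-n}]$ (append $a_{n+1}=2$), the discarded middle corresponding to the forbidden digit $1$; this is exactly the displayed description at level $n+1$. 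Equivalently, iterating Theorem~\ref{fixed point} gives $\mathcal{C} = \bigcup_{w \in \{1,2\}^n} \Phi_w(\mathcal{C})$ with $\Phi_w := \Phi_{w_1} \circ \cdots \circ \Phi_{w_n}$, and $\Phi_w([0,1])$ is precisely the interval above with $a_j := 2(w_j - 1)$. I would also record here, for later use, that distinct words yield intervals with disjoint interiors and that each level-$n$ interval contains exactly the two level-$(n+1)$ intervals obtained by appending $0$ or $2$.

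Next I would deduce the asserted set equality from $\mathcal{C} = \bigcap_{n \ge 0} C_n$. For the inclusion ``$\supseteq$'': if $\tau = \sum_{j \ge 1} a_j 3^{-j}$ with all $a_j \in \{0,2\}$, then since $\sum_{j>n} 2\cdot 3^{-j} = 3^{-n}$ the partial sums satisfy $\sum_{j=1}^n a_j 3^{-j} \le \tau \le \sum_{j=1}^n a_j 3^{-j} + 3^{-n}$, so $\tau$ lies in a constituent interval of $C_n$ for every $n$, whence $\tau \in \mathcal{C}$. For ``$\subseteq$'': if $\tau \in \mathcal{C}$, then for each $n$ there is a word $a_1^{(n)} \cdots a_n^{(n)}$ with digits in $\{0,2\}$ such that $\tau$ belongs to the associated level-$n$ interval; the nesting $C_{n+1} \subseteq C_n$ and the last observation of the previous paragraph force $a_j^{(n+1)} = a_j^{(n)}$ for all $j \le n$, so the digits stabilize to a single sequence $(a_j)_{j\ge1}$ in $\{0,2\}$ with $|\tau - \sum_{j=1}^n a_j 3^{-j}| \le 3^{-n}$ for every $n$. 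Letting $n \to \infty$ yields $\tau = \sum_{j \ge 1} a_j 3^{-j}$, an expansion of the required form.

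The step demanding the most care is the inclusion ``$\subseteq$'', because ternary expansions are not unique: endpoints of deleted intervals, such as $1/3 = (0.1\overline{0})_3 = (0.0\overline{2})_3$, lie in $\mathcal{C}$ and must be represented by their digit-$\{0,2\}$ expansion. The stabilization argument handles this automatically, since at each level the point is forced into whichever of the two subintervals actually survived; thus no separate discussion of endpoints is needed, and one must only refrain from asserting uniqueness of the ternary expansion — the theorem claims merely that \emph{some} expansion with digits in $\{0,2\}$ exists (such an expansion is in fact unique among those with digits in $\{0,2\}$, but this is not required). Finally, a cosmetic remark: the leading term $a_0$ is forced to be $0$ for any $\tau \in [0,1]$, since $a_0 = 2$ would give $\tau \ge 2$; hence it contributes nothing and merely reflects the convention that the expansion is indexed from $j=0$.
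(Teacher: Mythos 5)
Your proof is correct. The paper does not actually prove this statement---it is listed among ``well-known results'' with citations to [Fal, Ch.~9] and [GJ, p.~50]---so there is no in-paper argument to compare against; your induction identifying the $2^n$ constituent intervals of $C_n$ with $\{0,2\}$-words, followed by the two inclusions and the digit-stabilization argument, is the standard and complete route. One small strengthening you may as well record: the level-$n$ intervals are in fact pairwise \emph{disjoint} (any two surviving intervals are separated by a deleted open middle third of positive length), not merely of disjoint interiors, and this is what guarantees that the level-$n$ word containing a given $\tau\in\mathcal{C}$ is unique, which your stabilization step implicitly uses.
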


We note that, as usual, we choose the nonrepeating ternary expansion here
(so that none of the coefficient $a_j$ is equal to 1, and hence, the sequence of digits does not end with $\bar 1$, where the overbar indicates that 1 is repeated ad infinitum).
Such a precaution will not be needed in \S\ref{C3} for the elements of $\mathbb Q_3$  because the 3-adic expansion is unique.

\subsection{Archimedean (or Real) Cantor String}\label{CS}
The ordinary  \emph{archimedean} (or \emph{real}) \emph{Cantor string} $\mathcal{CS}$ is defined as the complement of the ternary Cantor  set in the closed unit interval $[0,1]$. By construction, the topological boundary of 
$\mathcal{CS}$ is the ternary Cantor  set $\mathcal{C}$. 
The Cantor string is one of the simplest and most important examples in the research monographs 
[\ref{L-vF1}, \ref{L-vF2}] by Lapidus and van Frankenhuijsen.  Indeed, it is used throughout those books to illustrate and motivate the general theory; see also, e.g., [\ref{Lap}] and [\ref{LapPo}].  From the point of view of the theory of fractal strings and their complex dimensions 
[\ref{L-vF1}, \ref{L-vF2}], it suffices to consider the sequence 
$\{l_n\}_{n\in \mathbb N^*}$ of lengths associated to $\mathcal{CS}$.\footnote{Here and thereafter, we let $\mathbb N:=\{0, 1, 2, \ldots\}$ and $\mathbb N^*:=\{1, 2, 3, \ldots\}$.} More specifically,  these are the distinct lengths of the intervals of which the bounded open set $\mathcal {CS} \subset \mathbb R$ is composed, counted according to their multiplicities.\footnote{For an arbitrary real or $p$-adic fractal string, there are two equivalent ways of keeping track of its associated lengths. Either by considering its distinct lengths, counted according to their multiplicities, as above, or else by considering the sequence of all of its lengths, written in nonincreasing order (and hence, tending to zero, except in the trivial case when the fractal string is composed of finitely many intervals).}
 Accordingly, the archimedean Cantor string consists of $m_1=1$ interval of length $l_1=1/3$, $m_2=2$ intervals of length $l_2=1/9$, $m_3=4$ intervals of length $l_3=1/27$, and so on;  
see Figure 2.

\begin{figure}[tb]\label{Cantor string}
\psfrag{dots}{$\dots$}
\psfrag{l1}{$l_1$}
\psfrag{l2}{$l_2$}
\psfrag{l3}{$l_3$}
\psfrag{ln}{$l_n$}
\raisebox{-1cm}
{\psfig{figure=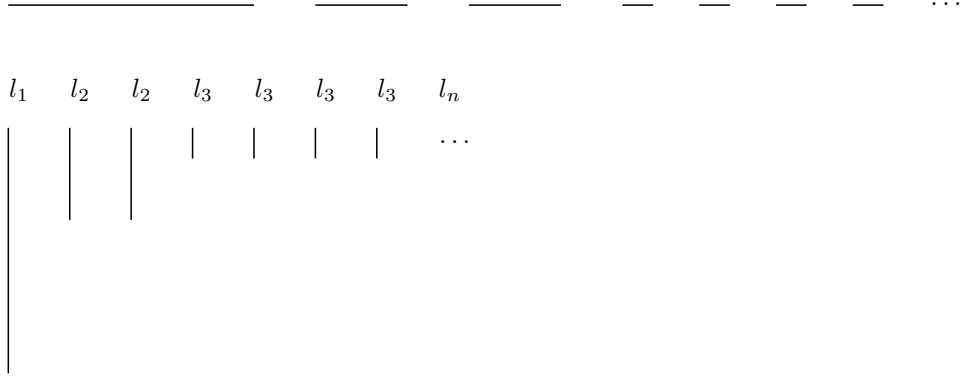, height=5.4cm}}
\caption{The archimedean Cantor string $\mathcal {CS}$ (above); the Cantor string viewed as a fractal harp (below).}
\end{figure}

Important information about the geometry of $\mathcal{CS}$, e.g., the Minkowski dimension and the Minkowski measurability, is contained in its  \emph{geometric zeta function}
\begin{equation}\label{zetaCS}
\zeta_{\mathcal{CS}}(s):=\sum_{n=1}^{\infty}m_n\cdot l_n^s=
\sum_{n=1}^{\infty}\frac{2^{n-1}}{ 3^{ns}} 
=\frac{3^{-s}}{1-2\cdot 3^{-s}} \hspace{0.5cm} \mbox{for}~ \Re(s)>D,
\end{equation}
where $D=\log2/\log3=\log_32$ is the \emph{Minkowski dimension} of the ternary Cantor  set.\footnote{Throughout this paper, $\log t$ denotes the natural logarithm of $t>0$.} 
In addition, 
$\zeta_{\mathcal{CS}}$ can be extended to a meromorphic function on the entire complex plane
$\mathbb{C}$, as given by the last expression in (\ref{zetaCS}).  The corresponding set of poles of $\zeta_{\mathcal{CS}}$ is then given by
\begin{equation}\label{cdCS}
 \mathcal{D}_{\mathcal {CS}}=\{   D+i \nu \mathbf{p}~|~\nu \in \mathbb{Z} \},
 \end{equation}
where $i:=\sqrt{-1}$ and $\mathbf{p}:=2\pi / \log3$ is the \emph{oscillatory period} of $\mathcal {CS}$.
The set $\mathcal{D}_{\mathcal {CS}}$ is called the set of \emph{complex dimensions} of the real Cantor string; see Figure 5 in \S\ref{CS3}.  

For $\varepsilon >0$, let $V_{\mathcal {CS}}(\varepsilon)$ be the volume of the inner tubular neighborhood of the boundary of the real Cantor string, i.e., 
$\partial(\mathcal {CS})=\mathcal C,$ with radius $\varepsilon$:
\begin{equation}\label{V_CS}
V_{\mathcal {CS}}(\varepsilon)=\mu_{L}(\{x\in \mathcal {CS}~|~d(x,\mathcal {C})<\varepsilon \}),
\end{equation}
where $\mu_L$ is the one-dimensional Lebesgue measure on $\mathbb R$. Then it can be computed directly (as in [\ref{L-vF2}, \S1.12]) to depend only on the lengths of $\mathcal{CS} $ and to be given by
\begin{equation}\label{tube formula of CS}
V_{\mathcal{CS}}(\varepsilon)=\frac{1}{2\log3}\sum_{\omega\in\mathcal{D}_{\mathcal{CS}}}\frac{(2\varepsilon)^{1-\omega}}{\omega(1-\omega)}-2\varepsilon,
\end{equation}
where $\mathcal{D}_{\mathcal{CS}}$ is as in (\ref{cdCS}).

The general theme of the monographs [\ref{L-vF1}, \ref{L-vF2}] is that \emph{the complex dimensions describe oscillations in the geometry and the spectrum of a fractal string.} In particular, due to the presence of nonreal complex dimensions on the vertical line $\Re(s)=D,$ there are oscillations of order $D$ in the geometry of $\mathcal{CS}$ and therefore its boundary, the ternary Cantor set, is \emph{not} Minkowski measurable; see [\ref{LapPo}], [\ref{L-vF2}, \S 1.1.2]. 

\subsection{Nonarchimedean (or 3-Adic) Cantor Set}\label{C3}

Our goal in this section is to provide a natural  nonarchimedean (or $p$-adic) analog of the classic ternary Cantor  set 
$\mathcal C$ and to show that it satisfies a counterpart of many of the key properties of $\mathcal C$ in this nonarchimedean context.  Furthermore, we will show in \S\ref{CS3} that the corresponding $p$-adic fractal string, called the 
nonarchimedean (or 3-adic)  Cantor string and denoted by $\mathcal {CS}_3$, is an exact analog of the ordinary archimedean Cantor string $\mathcal{CS}$, a central example in the theory of real  fractal strings and their complex dimensions [\ref{L-vF1}, \ref{L-vF2}]. The nonarchimedean Cantor set and string were both introduced and studied in detail in [\ref{LapLu1}].

We begin by recalling a few simple facts concerning the field of $p$-adic numbers $\mathbb Q_p,$ equipped with the standard $p$-adic absolute value $|\cdot|_p$ and associated topology; see, e.g., [\ref{Kob}, \ref{Rob}, \ref{Sch}].\footnote{Here and thereafter, $|\cdot|_p$ is normalized in the usual way; namely, 
$|p^k|_p=p^{-k},$ for any $k\in \mathbb Z$.} 
As is well known, 
every $z\in \mathbb Q_p$ has a unique representation as a convergent infinite series in ($\mathbb Q_p, |\cdot|_p$):
\[z=a_{v}p^{v} + \cdots + a_0 + a_1p + a_2 p^2 + \cdots, \]
for some $v\in \mathbb{Z}$ and  $a_j \in \{0, 1, \dots, p-1\}$ for all $j\geq v$.
An important subset of $\mathbb{Q}_p$ is the unit ball, $\mathbb{Z}_p=\{x\in \mathbb Q_p~:~ |x|_p\leq 1\}$, which can also be represented as follows:
\[\mathbb{Z}_p=\{ a_0 + a_1p + a_2 p^2 + \cdots ~|~a_j \in \{0, 1, \dots, p-1\},~ \forall j\geq 0 \}.
\] 
Using this \emph{p-adic expansion}, we can easily see that 
\begin{equation}\label{decomposition}
\mathbb Z_p=\bigcup_{c=0}^{p-1} (c+p\mathbb Z_p),
\end{equation}
where $c+p\mathbb Z_p=\{y\in \mathbb Q_p: |y-c|_p\le p^{-1}\}$ is the $p$-adic ball (or interval) of center $c$ and radius $p^{-1}$ .

A remarkable property of the nonarchimedean `unit interval' $\mathbb Z_p$ of $\mathbb Q_p$, which does not have any analog for the archimedean unit interval $[0,1]$ of $\mathbb R$, is that  $\mathbb Z_p$ is a ring, and in particular, is stable under addition; see, e.g., [\ref{Har}] for a thorough discussion of this point. Indeed, $|a+b|_p\leq \max\{|a|_p, |b|_p\}\leq 1,$ if $a,b\in \mathbb Z_p.$
Moreover, $\mathbb Z_p$ is a compact group and as such, admits a unique translation invariant measure, to be also denoted by $\mu_H$, which is the restriction to $\mathbb Z_p$ of Haar measure
 $\mu_H$ on $\mathbb Q_p$. 
Finally, note that unlike its real counterpart $[0,1]$ (or $[-1,1]$), $\mathbb Z_p$ is not connected; actually, it is totally disconnected. This well known fact, combined with the `self-duplicating property' 
(\ref{decomposition}), will naturally lead us to suitably modify many of the definitions and results of the standard theory of ordinary real fractal strings.

\begin{figure}[h]
\psfrag{dots}{$\vdots$}
\psfrag{C0}{$T_0$}
\psfrag{C1}{$T_1$}
\psfrag{C2}{$T_2$}
\psfrag{Cn}{$T_n$}
\psfrag{Z3}{$\mathbb Z_3$}
\psfrag{Z0}{$0+3\mathbb Z_3$}
\psfrag{Z1}{$1+3\mathbb Z_3$}
\psfrag{Z2}{$2+3\mathbb Z_3$}
\psfrag{1}{$0+9\mathbb Z_3$}
\psfrag{2}{$3+9\mathbb Z_3$}
\psfrag{3}{$6+9\mathbb Z_3$}
\psfrag{4}{$2+9\mathbb Z_3$}
\psfrag{5}{$5+9\mathbb Z_3$}
\psfrag{6}{$8+9\mathbb Z_3$}
\raisebox{-1cm}
{\psfig{figure=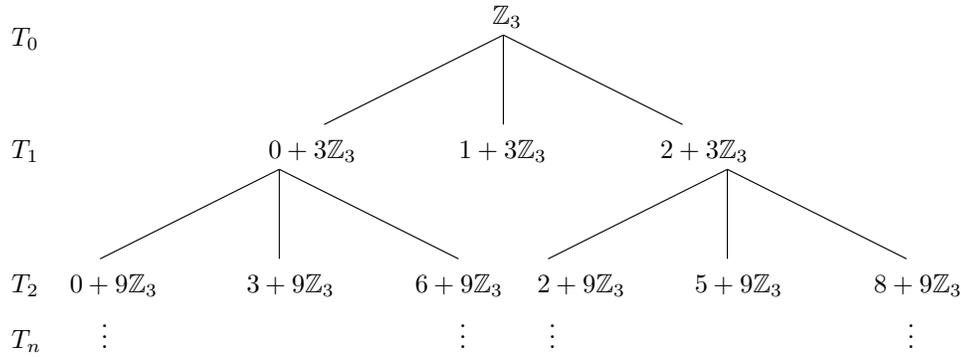, height=4.7cm}}
\caption{Construction of the nonarchimedean Cantor set $\mathcal{C}_3=\bigcap_{n=0}^{\infty} T_n.$}
\end{figure}

Consider the ring of 3-adic integers $\mathbb{Z}_3$. In a procedure reminiscent of the construction of the classic ternary Cantor set (see \S\ref{C}),  we construct the \emph{nonarchimedean} (or\emph{ 3-adic}) \emph{Cantor set} as follows.  First, we subdivide  $T_0=\mathbb{Z}_3$ into 3 equally long subintervals. We then remove the ``middle  third'' subinterval
$1+3\mathbb{Z}_3$ and call $T_1$ the remaining set: $T_1=0+3\mathbb{Z}_3 \cup 2+3\mathbb{Z}_3$.
We then repeat this process with each of the remaining subintervals, i.e., with $0+3\mathbb{Z}_3$ and  $2+3\mathbb{Z}_3$.  Finally, we define the nonarchimedean Cantor set $\mathcal{C}_3$ to be
$\bigcap_{n=0}^{\infty} T_n$; see Figure 3.  Here, for each $n\geq 0,$ the compact set $T_n$ is the union of $2^n$ $3$-adic intervals of scale $n$ (i.e., of radius or diameter $3^{-n}$). Note that $\mathcal C_3$ is compact, as the intersection of a decreasing sequence of compact subsets of $\mathbb Z_3.$

The nonarchimedean analog of Theorem \ref{fixed point} is then given  by Theorem \ref{invariant}: 
\begin{theorem}\label{invariant}
The nonarchimedean Cantor set $\mathcal{C}_3$ is self-similar.  More specifically, it is the unique nonempty, compact invariant set in $\mathbb Z_3 \subset \mathbb Q_3$ generated by the IFS
 $\mathbf{\Phi}=\{\Phi_1, \Phi_2\}$ of affine similarity contraction mappings of $\mathbb{Z}_3$ into itself, where
\begin{equation}\label{cantor maps}
\Phi_1(x)=3x \quad \mbox{and}\quad \Phi_2(x)=3x+2.
\end{equation}
That is, 
\[
\mathcal{C}_3= \Phi_1(\mathcal{C}_3)\cup \Phi_2(\mathcal{C}_3).
\]
\end{theorem}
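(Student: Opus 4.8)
The plan is to verify, in turn, that (i) the set $\mathcal C_3 = \bigcap_{n=0}^\infty T_n$ satisfies the fixed-point equation $\mathcal C_3 = \Phi_1(\mathcal C_3)\cup\Phi_2(\mathcal C_3)$, and (ii) that it is the \emph{unique} nonempty compact subset of $\mathbb Z_3$ with this property. For (ii) I would invoke the standard contraction-mapping argument for iterated function systems: since $\mathbb Q_3$ is a complete metric space, the space of nonempty compact subsets of $\mathbb Z_3$ equipped with the Hausdorff metric is complete, and the Hutchinson operator $\mathbf\Phi(A)=\Phi_1(A)\cup\Phi_2(A)$ is a contraction because each $\Phi_j$ is a similarity contraction with ratio $|3|_3 = 3^{-1} < 1$. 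Indeed $|\Phi_j(x)-\Phi_j(y)|_3 = |3|_3\,|x-y|_3 = 3^{-1}|x-y|_3$, so $\mathbf\Phi$ contracts the Hausdorff metric by the factor $3^{-1}$, and Banach's fixed point theorem gives a unique invariant compact set. (One must also check $\mathbf\Phi$ maps compact subsets of $\mathbb Z_3$ to compact subsets of $\mathbb Z_3$: continuity of the $\Phi_j$ handles compactness, and $\Phi_1(\mathbb Z_3)=3\mathbb Z_3\subset\mathbb Z_3$, $\Phi_2(\mathbb Z_3)=2+3\mathbb Z_3\subset\mathbb Z_3$ handle the containment.)

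It then remains to identify the abstract fixed point with $\mathcal C_3$, and for this I would show the stronger statement $\mathbf\Phi(T_n) = T_{n+1}$ for every $n\geq 0$, from which $\mathbf\Phi(\mathcal C_3)=\mathcal C_3$ follows by taking intersections (using that a decreasing intersection of compacts is respected by the continuous, injective-on-each-piece maps $\Phi_j$ — concretely, $\Phi_j\bigl(\bigcap_n T_n\bigr)=\bigcap_n\Phi_j(T_n)$ because the $T_n$ are nested compacts and $\Phi_j$ is a homeomorphism onto its image). The base case is the defining relation $T_1 = (0+3\mathbb Z_3)\cup(2+3\mathbb Z_3) = 3\mathbb Z_3\cup(2+3\mathbb Z_3) = \Phi_1(\mathbb Z_3)\cup\Phi_2(\mathbb Z_3) = \mathbf\Phi(T_0)$. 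For the inductive step, the key structural observation is that $\Phi_j$ sends a $3$-adic interval of scale $n$ to a $3$-adic interval of scale $n+1$: writing a scale-$n$ interval as $a+3^n\mathbb Z_3$, we get $\Phi_1(a+3^n\mathbb Z_3) = 3a+3^{n+1}\mathbb Z_3$ and $\Phi_2(a+3^n\mathbb Z_3) = (3a+2)+3^{n+1}\mathbb Z_3$. One checks that as $a$ ranges over the centers of the $2^n$ intervals comprising $T_n$, the $2^{n+1}$ intervals $\Phi_1(a+3^n\mathbb Z_3)$ and $\Phi_2(a+3^n\mathbb Z_3)$ are exactly the $2^{n+1}$ intervals comprising $T_{n+1}$ — this is precisely the nonarchimedean analog of the "remove the middle third" step, and it uses the self-duplicating decomposition (\ref{decomposition}) applied at scale $n+1$.

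The main obstacle I anticipate is purely bookkeeping: making the combinatorial correspondence between the scale-$n$ pieces of $T_n$ and the scale-$(n+1)$ pieces of $T_{n+1}$ precise, i.e., verifying that $\Phi_1$ and $\Phi_2$ together reproduce exactly the surviving sub-subintervals of the construction and no others. This amounts to checking that the two affine maps are injective with disjoint images at each scale (so no piece is hit twice and no piece is missed), which follows from $|3a+2 - 3a'|_3 = |3(a-a')+2|_3$: since $|3(a-a')|_3 \le 3^{-1} < 1 = |2|_3$, the ultrametric inequality forces this to equal $|2|_3 \ge 3^{-1}$... more carefully, $|3(a-a')+2|_3 = \max\{|3(a-a')|_3,|2|_3\}$ when the two terms have distinct absolute values, and $|2|_3 = 1$ while $|3(a-a')|_3 \le 3^{-1}$, so the images of $\Phi_1$ and $\Phi_2$ are disjoint. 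Everything else is a routine induction.
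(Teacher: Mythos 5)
Your proposal is correct and follows exactly the route the paper indicates: the survey defers the detailed proof to [LapLu1], but its Theorem \ref{Hutchinson} obtains existence and uniqueness of the nonempty compact invariant set precisely as you do, via the contraction mapping principle on the hyperspace of nonempty compact subsets of $\mathbb Z_p$ equipped with the Hausdorff metric, using that each $\Phi_j$ is a similarity contraction of ratio $|3|_3=3^{-1}$. Your identification of the fixed point with $\mathcal{C}_3$ through the induction $\mathbf{\Phi}(T_n)=T_{n+1}$ is sound (and your ultrametric disjointness check is right); the only remark worth adding is that the bookkeeping you flag can be bypassed entirely by the digit characterization of Theorem \ref{02}, since $\Phi_1$ and $\Phi_2$ simply prepend the digits $0$ and $2$ to the $3$-adic expansion, which makes both $\mathbf{\Phi}(T_n)=T_{n+1}$ and the invariance of $\mathcal{C}_3$ immediate.
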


 The next result is also a counterpart of a well known property of the ternary Cantor set (which we have omitted to recall in \S\ref{C}, by necessity of concision). It is a simple consequence of the self-similarity of the nonarchimedean Cantor set $\mathcal C_3$, as expressed by Theorem \ref{invariant}, and provides a useful introduction to the more general notion of  $p$-adic self-similar string to be discussed in 
 \S\ref{sss}. 
 
 \begin{theorem}\label{Cantor} 
 Let $W_{\alpha}=\{1,2\}^{\alpha}$ be the set of all finite words, on two symbols, of a given length $\alpha\geq 0$. Then 
 \[
 \mathcal{C}_3=\bigcap_{\alpha=0}^{\infty} \bigcup_{w\in W_{\alpha}} \Phi_w (\mathbb{Z}_3),
 \]
 where $\Phi_w:=\Phi_{w_{\alpha}} \circ \cdots  \circ \Phi_{w_1}$ for~ $w=(w_1, \dots, w_{\alpha})\in W_{\alpha}$ and the maps $\Phi_{w_j} $ are as in Equation~(\ref{cantor maps}). 
 \end{theorem}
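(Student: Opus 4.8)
The plan is to read the identity directly off the fixed-point characterization of $\mathcal C_3$ supplied by Theorem~\ref{invariant}, establishing the two inclusions separately. The one preliminary observation I would record is that, iterating the self-similarity relation $\mathcal C_3=\Phi_1(\mathcal C_3)\cup\Phi_2(\mathcal C_3)$, an immediate induction on $\alpha$ yields
\[
\mathcal C_3=\bigcup_{w\in W_\alpha}\Phi_w(\mathcal C_3)
\]
for every $\alpha\geq 0$; the inductive step amounts to the bookkeeping fact that each word of length $\alpha+1$ is obtained in a unique way from a word $v\in W_\alpha$ by adjoining one symbol $j\in\{1,2\}$, together with the relation $\Phi_v\circ\Phi_j=\Phi_{(j,v)}$ forced by the composition convention in the statement. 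I would also note that for $w\in W_\alpha$ the map $\Phi_w$ is an affine similarity of the form $x\mapsto 3^\alpha x+b_w$, so that $\Phi_w(\mathbb Z_3)=b_w+3^\alpha\mathbb Z_3$ is a $3$-adic ball of radius $3^{-\alpha}$.

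For the inclusion ``$\subseteq$'' it suffices to combine the displayed identity with $\mathcal C_3\subseteq\mathbb Z_3$ and the monotonicity of each $\Phi_w$ with respect to set inclusion: then $\Phi_w(\mathcal C_3)\subseteq\Phi_w(\mathbb Z_3)$ for all $w$, hence $\mathcal C_3\subseteq\bigcup_{w\in W_\alpha}\Phi_w(\mathbb Z_3)$ for every $\alpha$, and therefore $\mathcal C_3$ is contained in the intersection over $\alpha$.

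For the reverse inclusion, fix a point $x$ lying in $\bigcap_{\alpha\geq 0}\bigcup_{w\in W_\alpha}\Phi_w(\mathbb Z_3)$. For each $\alpha$, choose $w^{(\alpha)}\in W_\alpha$ with $x\in B_\alpha:=\Phi_{w^{(\alpha)}}(\mathbb Z_3)$, a ball of radius $3^{-\alpha}$. By the displayed identity, $\Phi_{w^{(\alpha)}}(\mathcal C_3)$ is a nonempty subset of $\mathcal C_3$ contained in $B_\alpha$, so $B_\alpha\cap\mathcal C_3\neq\emptyset$ for every $\alpha$. Suppose now that $x\notin\mathcal C_3$; since $\mathcal C_3$ is nonempty and compact, $r:=d(x,\mathcal C_3)>0$, and choosing $\alpha$ large enough that $3^{-\alpha}<r$ and using that in the ultrametric space $\mathbb Q_3$ every point of the ball $B_\alpha$ lies within distance $3^{-\alpha}$ of $x$, we obtain $B_\alpha\cap\mathcal C_3=\emptyset$, a contradiction. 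Hence $x\in\mathcal C_3$, which gives ``$\supseteq$'' and completes the proof.

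The two inductions and the monotonicity of the $\Phi_w$ are entirely routine; the one genuinely nonarchimedean point, and the step I would be most careful with, is the final ultrametric argument that forces $B_\alpha$ to miss $\mathcal C_3$ as soon as its radius drops below $d(x,\mathcal C_3)$. Alternatively, one can dispense with Theorem~\ref{invariant} and argue via $3$-adic digit expansions: from $\Phi_w(\mathbb Z_3)=b_w+3^\alpha\mathbb Z_3$ one checks that $\bigcup_{w\in W_\alpha}\Phi_w(\mathbb Z_3)$ is precisely the set of $z\in\mathbb Z_3$ whose first $\alpha$ digits avoid $1$, i.e.\ the set $T_\alpha$ of the construction, so that intersecting over $\alpha$ recovers $\mathcal C_3=\bigcap_{\alpha}T_\alpha$.
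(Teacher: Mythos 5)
Your argument is correct, and it follows exactly the route the paper itself indicates (the paper gives no proof here, deferring to [\ref{LapLu1}], but explicitly describes the theorem as ``a simple consequence of the self-similarity of $\mathcal{C}_3$ as expressed by Theorem \ref{invariant}''): iterate the invariance relation, use monotonicity for one inclusion, and use compactness of $\mathcal{C}_3$ together with the fact that $\Phi_w(\mathbb{Z}_3)$ is a ball of radius $3^{-\alpha}$ for the other. Your closing alternative, identifying $\bigcup_{w\in W_{\alpha}}\Phi_w(\mathbb{Z}_3)$ with the set $T_{\alpha}$ of the explicit construction, is also valid and is arguably the most direct match with the paper's definition $\mathcal{C}_3=\bigcap_{n}T_n$.
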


\begin{figure}[h]
\psfrag{dots}{$\vdots$}
\psfrag{C0}{$C_0$}
\psfrag{C1}{$C_1$}
\psfrag{C2}{$C_2$}
\psfrag{Cn}{$C_n$}
\psfrag{Z3}{$\mathbb Z_3$}
\psfrag{Z0}{$\Phi_1(\mathbb Z_3)$}
\psfrag{Z1}{$1+3\mathbb Z_3$}
\psfrag{Z2}{$\Phi_2(\mathbb Z_3)$}
\psfrag{1}{$\Phi_{11}(\mathbb Z_3)$}
\psfrag{2}{$3+9\mathbb Z_3$}
\psfrag{3}{$\Phi_{21}(\mathbb Z_3)$}
\psfrag{4}{$\Phi_{12}(\mathbb Z_3)$}
\psfrag{5}{$5+9\mathbb Z_3$}
\psfrag{6}{$\Phi_{22}(\mathbb Z_3)$}
\raisebox{-1cm}
{\psfig{figure=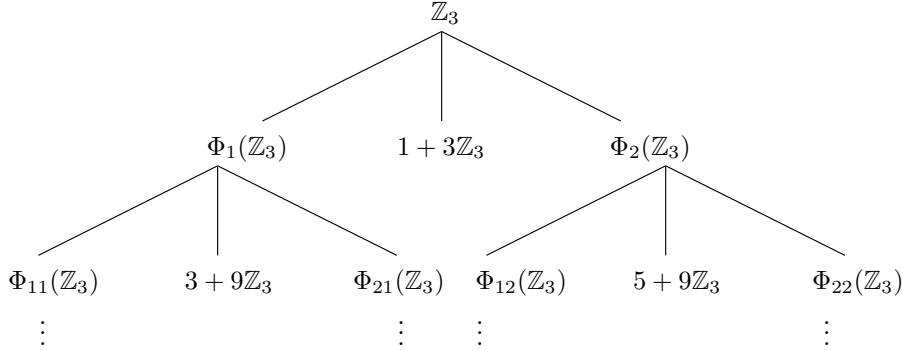, height=4.7cm}}
\caption{Construction of the nonarchimedean Cantor set $\mathcal{C}_3$ via an Iterated Function System (IFS).}
 \end{figure}

The following result is the nonarchimedean analog of Theorem \ref{expansion}:
\begin{theorem}\label{02}
The nonarchimedean Cantor set is characterized by the 3-adic expansion of its elements. That is,
  \[
\mathcal{C}_3=\left\{ \tau \in \mathbb Z_3~|~\tau=a_0 + a_1 3 +a_2 3^2 + \cdots,  a_j\in \{0,2\},~
\forall j\geq0\right\}.
\]
\end{theorem}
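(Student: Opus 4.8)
The plan is to deduce Theorem~\ref{02} from the IFS description of $\mathcal{C}_3$ furnished by Theorem~\ref{Cantor}, by computing the balls $\Phi_w(\mathbb Z_3)$ explicitly and reading off the $3$-adic digits of their elements. To a word $w=(w_1,\dots,w_\alpha)\in W_\alpha$ I would associate the digits $\epsilon_j\in\{0,2\}$ defined by $\epsilon_j=0$ if $w_j=1$ and $\epsilon_j=2$ if $w_j=2$, so that $\Phi_{w_j}(x)=3x+\epsilon_j$ for each $j$. Composing in the order prescribed in Theorem~\ref{Cantor}, namely $\Phi_w=\Phi_{w_\alpha}\circ\cdots\circ\Phi_{w_1}$, a short induction on $\alpha$ gives
\[
\Phi_w(x)=3^\alpha x+\sum_{j=1}^{\alpha}3^{\alpha-j}\epsilon_j=:3^\alpha x+b_w,
\]
where $b_w$ is an integer with $0\le b_w\le 3^\alpha-1$. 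Hence $\Phi_w(\mathbb Z_3)=b_w+3^\alpha\mathbb Z_3$ is the $3$-adic ball of radius $3^{-\alpha}$ consisting of exactly those $z\in\mathbb Z_3$ whose first $\alpha$ digits in the (unique) $3$-adic expansion coincide with the base-$3$ digits of $b_w$.

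Next I would identify those base-$3$ digits. Writing $b_w=c_0+c_1 3+\cdots+c_{\alpha-1}3^{\alpha-1}$ and comparing with $b_w=\epsilon_\alpha+3\epsilon_{\alpha-1}+\cdots+3^{\alpha-1}\epsilon_1$, uniqueness of the base-$3$ expansion of an integer in $[0,3^\alpha)$ forces $c_i=\epsilon_{\alpha-i}$ for $i=0,\dots,\alpha-1$; in particular every $c_i$ lies in $\{0,2\}$, and conversely every $\alpha$-tuple of digits in $\{0,2\}$ is realized by some $w\in W_\alpha$. Consequently,
\[
\bigcup_{w\in W_\alpha}\Phi_w(\mathbb Z_3)=\bigl\{\,z=a_0+a_1 3+a_2 3^2+\cdots\in\mathbb Z_3 : a_j\in\{0,2\}\ \text{for}\ 0\le j\le\alpha-1\,\bigr\},
\]
the digits $a_j$ with $j\ge\alpha$ being unconstrained. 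Finally, intersecting over all $\alpha\ge0$ (noting that $W_0$ contributes only $\mathbb Z_3$ itself), Theorem~\ref{Cantor} yields
\[
\mathcal{C}_3=\bigcap_{\alpha=0}^{\infty}\bigcup_{w\in W_\alpha}\Phi_w(\mathbb Z_3)=\bigl\{\,\tau=a_0+a_1 3+a_2 3^2+\cdots\in\mathbb Z_3 : a_j\in\{0,2\}\ \forall j\ge0\,\bigr\},
\]
which is the asserted characterization.

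As for difficulties: there is no genuine obstacle here, only bookkeeping. The one place to be careful is the direction of the composition $\Phi_w=\Phi_{w_\alpha}\circ\cdots\circ\Phi_{w_1}$ and the resulting correspondence between word positions and digit positions (the reversal $c_i=\epsilon_{\alpha-i}$); and one should emphasize that, in contrast with the archimedean Theorem~\ref{expansion}, no nonterminating-expansion convention is needed, since the $3$-adic expansion of an element of $\mathbb Z_3$ is genuinely unique. Alternatively, the same statement can be obtained directly from the construction $\mathcal C_3=\bigcap_n T_n$ by proving inductively that $T_n=\{z\in\mathbb Z_3: a_j\in\{0,2\}\ \text{for}\ 0\le j\le n-1\}$, the inductive step amounting to the observation that subdividing a ball $b+3^n\mathbb Z_3$ into its three sub-balls $b+c\,3^n+3^{n+1}\mathbb Z_3$ ($c\in\{0,1,2\}$) and deleting the middle one ($c=1$) is precisely the operation of forbidding the digit $1$ in position $n$.
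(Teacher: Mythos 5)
Your proof is correct. The paper itself states Theorem \ref{02} without proof, deferring (along with the other results of \S\ref{CSS}) to the reference [\ref{LapLu1}], so there is no in-paper argument to compare against; but your derivation from Theorem \ref{Cantor} is the natural one, the digit bookkeeping (the reversal $c_i=\epsilon_{\alpha-i}$ coming from the composition order $\Phi_w=\Phi_{w_\alpha}\circ\cdots\circ\Phi_{w_1}$) is handled correctly, and your alternative route via the sets $T_n$ matches the construction $\mathcal C_3=\bigcap_{n\ge 0}T_n$ given in \S\ref{C3} directly, without passing through Theorem \ref{Cantor}.
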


\begin{theorem}\label{homeomorphism}
The ternary Cantor  set $\mathcal{C}$ and the nonarchimedean Cantor set $\mathcal{C}_3$ are homeomorphic. 
\end{theorem}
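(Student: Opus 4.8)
The plan is to use the digit-sequence characterizations of the two Cantor sets and to exhibit the homeomorphism explicitly. By Theorem~\ref{expansion}, every point of $\mathcal C$ is uniquely of the form $\sum_j a_j 3^{-j}$ with all digits $a_j\in\{0,2\}$ (using, as stipulated there, the non-terminating ternary expansion), and by Theorem~\ref{02}, every point of $\mathcal C_3$ is uniquely of the form $\sum_j a_j 3^{j}$ with all $a_j\in\{0,2\}$. Accordingly I would let $\Sigma=\{0,2\}^{\mathbb N}$ carry the product topology and introduce the two coding maps
\[
\phi\colon\Sigma\to\mathcal C,\quad (a_j)_j\longmapsto\sum_j a_j 3^{-j},
\qquad
\psi\colon\Sigma\to\mathcal C_3,\quad (a_j)_j\longmapsto\sum_j a_j 3^{j},
\]
and then set $h:=\psi\circ\phi^{-1}\colon\mathcal C\to\mathcal C_3$; informally, $h$ keeps the digit string of a point of $\mathcal C$ and reinterprets it $3$-adically. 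The theorem reduces to showing that $\phi$ and $\psi$ are both homeomorphisms.

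Well-definedness and bijectivity come first. That $\phi$ and $\psi$ map \emph{onto} $\mathcal C$ and $\mathcal C_3$ is precisely the content of Theorems~\ref{expansion} and~\ref{02}. Injectivity of $\psi$ is immediate from the uniqueness of the $p$-adic expansion recalled in \S\ref{C3}; injectivity of $\phi$ follows from the uniqueness of the non-terminating ternary expansion --- which is exactly why restricting the digits to $\{0,2\}$ (with no trailing $\bar 1$) causes no ambiguity, the point flagged in the remark after Theorem~\ref{expansion}. For continuity, I would use that the cylinders fixing finitely many coordinates form a neighborhood base of $\Sigma$: if $a=(a_j)_j$ and $b=(b_j)_j$ agree in their first $N$ coordinates, then a geometric-series bound gives $|\phi(a)-\phi(b)|\le 3^{\,1-N}$ in $\mathbb R$, while $\psi(a)-\psi(b)=\sum_{j\ge N}(a_j-b_j)3^{j}$ has $3$-adic absolute value $\le 3^{-N}$ in $\mathbb Q_3$; both right-hand sides tend to $0$, so $\phi$ and $\psi$ are continuous. (Alternatively, one notes that $\phi$ and $\psi$ carry the partition of $\Sigma$ into its $2^n$ level-$n$ cylinders onto the natural partitions of $\mathcal C$ and of $\mathcal C_3$ --- for the latter, into the $2^n$ balls making up $T_n$ --- which makes continuity transparent.)

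To finish, I would invoke compactness: $\Sigma$ is compact (a countable product of finite discrete spaces), while $\mathcal C$ and $\mathcal C_3$ are Hausdorff, being metric subspaces of $\mathbb R$ and $\mathbb Q_3$; hence the continuous bijections $\phi$ and $\psi$ are homeomorphisms, and therefore so is $h=\psi\circ\phi^{-1}$. I expect the only point requiring real care --- bookkeeping rather than a true obstacle --- to be the injectivity of $\phi$: on the archimedean side one must invoke the non-terminating-expansion convention to separate distinct admissible digit strings, whereas the $p$-adic side is cleaner because $3$-adic expansions are unique outright. As an alternative route, one could give a purely topological proof by verifying that $\mathcal C_3$ is nonempty, compact, metrizable, perfect, and totally disconnected (total disconnectedness being inherited from $\mathbb Q_3$, and perfectness read off from $\mathcal C_3=\bigcap_{n} T_n$, equivalently from Theorem~\ref{Cantor}), and then quoting Brouwer's characterization of the Cantor space; but the explicit coding map above is more informative and is what I would present.
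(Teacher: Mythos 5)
Your proof is correct and is essentially the paper's own argument: the paper defines exactly your map $h$ (there called $\gamma$, sending $\sum_j a_j 3^{-j}$ to $\sum_j a_j 3^{j}$ with digits in $\{0,2\}$), cites Theorems \ref{expansion} and \ref{02} for bijectivity and continuity, and concludes by the same compact-to-Hausdorff argument. Your factoring through the symbol space $\Sigma$ merely makes the continuity step explicit.
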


\begin{proof} 
Let $\gamma:\mathcal{C}\rightarrow \mathcal{C}_3$ be the map sending 
\begin{equation}\label{rep}
\sum_{j=0}^{\infty}a_j 3^{-j} \mapsto \sum_{j=0}^{\infty}a_j 3^j,
\end{equation}
where $a_j\in \{0,2\}$ for all $j\geq0$. We note that on the left-hand side of (\ref{rep}), we use the ternary expansion in $\mathbb R$, whereas on the right-hand side we use the 3-adic expansion in $\mathbb Q_3$.
Then, in light of Theorems \ref{expansion} and \ref{02}, $\gamma$ is a continuous bijective map from $\mathcal{C}$ onto $\mathcal{C}_3$. 
Since both $\mathcal{C}$ and $\mathcal{C}_3$ are compact spaces in their respective natural metric topologies, $\gamma$ is a homeomorphism. 
 \end{proof}
 
 \begin{remark}
 In view of Theorem \ref{homeomorphism}, like its archimedean counterpart, the nonarchimedean Cantor set $\mathcal C_3$ is compact, totally disconnected, uncountably infinite and has no isolated points.  In particular, it is a perfect and complete metric space; furthermore, its topological dimension is 0.
 \end{remark}

\subsection{Nonarchimedean (or 3-Adic) Cantor String}\label{CS3}
The \emph{nonarchimedean} (or\emph{ 3-adic}) \emph{Cantor string} $\mathcal{CS}_3$ is defined to be 
\begin{equation}
\mathcal{CS}_3:=(1+3\mathbb{Z}_3) \cup  (3+9\mathbb{Z}_3) \cup  (5+9\mathbb{Z}_3) \cup \cdots
= \mathbb{Z}_3 \backslash \mathcal{C}_3,
\end{equation}
the complement of $\mathcal C_3$ in $\mathbb Z_3$; see the ``middle'' parts of Figure 4.
 Therefore, by analogy with the relationship between the archimedean Cantor set and Cantor string, the nonarchimedean Cantor set $\mathcal C_3$ can be thought of as a kind of ``boundary" of the nonarchimedean Cantor string.  Certainly, $\mathcal{C}_3$ is not the topological boundary of $\mathcal{CS}_3$ because the latter boundary is empty.  

As was alluded to earlier, since $\mathbb Q_p$ is a locally compact group, there is a unique translation invariant positive measure on $\mathbb Q_p$, called \emph{Haar measure} and denoted by  
$\mu_H,$   normalized so that $\mu_H(\mathbb Z_p)=1$ and hence
 $ \mu_H(a+p^k\mathbb{Z}_p)=p^{-k}$, for any $a\in \mathbb Q_p$ and $k\in \mathbb Z$; see 
 [\ref{Kob}], [\ref{Rob}], [\ref{Sch}]. As in the real case in \S\ref{CS}, we may identify $\mathcal{CS}_3$ with the sequence of lengths $l_n=3^{-n},$ counted with multiplicities $m_n=2^{n-1},$ for all $n\geq 1$.  
 
\begin{figure}[h]\label{cd}
\begin{center}
\psfrag{p}{$\mathbf{p}$}
\psfrag{10}{$10$}
\psfrag{0}{$0$}
\psfrag{1}{$1$}
\psfrag{D}{$D$}
\raisebox{-1cm}
{\psfig{figure=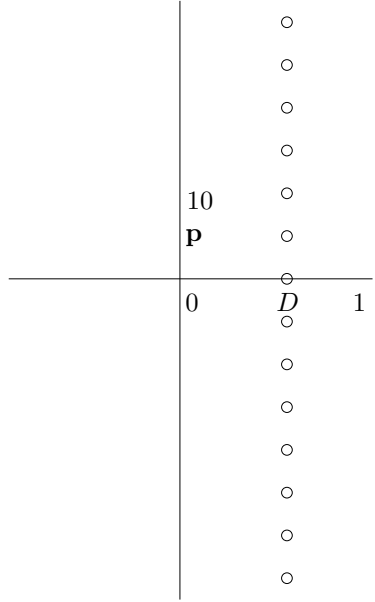, height=8cm}}
\caption{The set of complex dimensions, $\mathcal D_{\mathcal{CS}}=\mathcal D_{\mathcal{CS}_3},$
 of the archimedean and nonarchimedean Cantor strings, 
$\mathcal {CS}$ and $\mathcal{CS}_3$.}
\end{center}
\end{figure}

 Clearly, these lengths are given by the Haar measure of the $2^{n-1}$  3-adic intervals 
 $\{I_{n,q}\}_{q=1}^{2^{n-1}}$ of scale $n$ (and hence, of length $3^{-n}$) composing the level $n$ approximation to $\mathcal {CS}_3$. In the sequel, in agreement with the general definition of the geometric zeta function of a $p$-adic fractal string to be given in \S\ref{pfs}, 
 $\zeta_{\mathcal {CS}_3}(s)$ is initially defined by the following convergent Dirichlet series:
 \[
 \zeta_{\mathcal{CS}_3}(s):=\sum_{n=1}^{\infty}2^{n-1}(\mu_H(I_{n,q}))^s
 =\sum_{n=1}^{\infty}\frac{2^{n-1}}{3^{ns}},
  \]
  for $\Re(s)>\log_32$.

 The following theorem provides the exact analog of Equations (\ref{zetaCS}) and 
 (\ref{cdCS}):
 \begin{theorem}\label{zcd}
 The geometric zeta function of the nonarchimedean Cantor string is meromorphic in all of $\mathbb C$ and is given by
 \begin{equation}\label{zetaCS3}
\zeta_{\mathcal{CS}_3}(s)=\frac{3^{-s}}{1-2\cdot 3^{-s}}, \hspace{1cm} \mbox{for}~ s\in \mathbb C.
\end{equation}
Hence, the set of complex dimensions  of $\mathcal {CS}_3$ is given by
\begin{equation}\label{vcdCS3}
\mathcal{D}_{\mathcal {CS}_3}=\{   D+i \nu \mathbf{p}~|~\nu \in \mathbb{Z} \},
\end{equation}
where $D=\log_32$ is the dimension of $\mathcal {CS}_3$ and $\mathbf{p}=2\pi / \log3$ is its oscillatory period; see Figure 5.
 \end{theorem}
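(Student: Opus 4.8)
The plan is to compute the Dirichlet series $\zeta_{\mathcal{CS}_3}(s)=\sum_{n=1}^\infty 2^{n-1}3^{-ns}$ in closed form, observe that this closed form furnishes the meromorphic continuation, and then read off the poles. First I would note that for $\Re(s)>\log_3 2$ we have $|2\cdot 3^{-s}|=2\cdot 3^{-\Re(s)}<1$, so the series is a convergent geometric series with ratio $2\cdot 3^{-s}$ and first term $3^{-s}$; summing it gives
\[
\zeta_{\mathcal{CS}_3}(s)=\sum_{n=1}^\infty 3^{-s}\bigl(2\cdot 3^{-s}\bigr)^{n-1}=\frac{3^{-s}}{1-2\cdot 3^{-s}}.
\]
The right-hand side is a ratio of entire functions (indeed of entire functions of the form $e^{-s\log 3}$ and $1-2e^{-s\log 3}$), hence meromorphic on all of $\mathbb C$, and it agrees with the series on the half-plane $\Re(s)>\log_3 2$; by the principle of analytic continuation this is \emph{the} meromorphic extension, proving \eqref{zetaCS3}.

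Next I would locate the poles. Since the numerator $3^{-s}$ never vanishes, the poles of $\zeta_{\mathcal{CS}_3}$ are exactly the zeros of $1-2\cdot 3^{-s}$, i.e. the solutions of $3^{-s}=1/2$, equivalently $e^{-s\log 3}=e^{-\log 2}$. Taking logarithms, $-s\log 3=-\log 2+2\pi i k$ for $k\in\mathbb Z$, so $s=\frac{\log 2}{\log 3}-\frac{2\pi i k}{\log 3}=D+i\nu\mathbf{p}$ as $\nu=-k$ ranges over $\mathbb Z$, with $D=\log_3 2$ and $\mathbf p=2\pi/\log 3$. This yields the set $\mathcal{D}_{\mathcal{CS}_3}$ in \eqref{vcdCS3}. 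One should also check these are simple poles (the derivative of $1-2\cdot 3^{-s}$ is $2\log 3\cdot 3^{-s}$, which is nonzero at each such $s$), although simplicity is not strictly needed for the statement as phrased.

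The only remaining point is that $D=\log_3 2$ really is the \emph{dimension} of $\mathcal{CS}_3$ in the sense of the general $p$-adic theory; here one should invoke (or forward-reference) the definition in \S\ref{pfs} identifying the Minkowski dimension with the abscissa of convergence of the geometric zeta function, which for a convergent geometric Dirichlet series of this type is precisely the value where $2\cdot 3^{-\Re(s)}=1$, namely $\Re(s)=\log_3 2$. I do not anticipate a genuine obstacle: the computation is entirely parallel to the archimedean case in \eqref{zetaCS}--\eqref{cdCS}, and the main ``work'' is simply recognizing that the Haar-measure lengths $\mu_H(I_{n,q})=3^{-n}$ with multiplicities $2^{n-1}$ reproduce verbatim the same Dirichlet series as the real Cantor string — which is the whole point of the example and explains the identity $\mathcal D_{\mathcal{CS}}=\mathcal D_{\mathcal{CS}_3}$ recorded in Figure~5.
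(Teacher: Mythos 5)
Your proposal is correct and follows essentially the same route as the paper (see the computation recalled in Example \ref{cantor}): sum the geometric Dirichlet series $\sum_{n\ge 1}2^{n-1}3^{-ns}$ for $\Re(s)>\log_3 2$, invoke analytic continuation to identify the closed form as the meromorphic extension to all of $\mathbb C$, and read off the poles as the solutions of $3^{-s}=\tfrac{1}{2}$. Your additional remarks on simplicity of the poles and on the identification of $D$ with the abscissa of convergence (via Theorem \ref{D=sigma}) are consistent with the paper's treatment.
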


\begin{remark}
It is proved in [\ref{LapLu3}] that $D$ is the Minkowski dimension of $\mathcal {CS}_3 \subset \mathbb Z_3$; see Theorem \ref{D=sigma} below.  Clearly, it follows from the above computation that $D$ is also the abscissa of convergence of the Dirichlet series initially defining 
$\zeta_{\mathcal{CS}_3}.$ 
\end{remark}

\begin{remark}
We will see in Example \ref{cantor} that $\mathcal {CS}_3$ is a 3-adic \emph{self-similar}  string (in the sense of [\ref{LapLu2}] and \S\ref{sss}), with associated nonarchimedean self-similar set $\mathcal C_3$ and such that 
$\mathcal{CS}_3=\mathbb Z_3\backslash \mathcal C_3,$ the complement of the 3-adic Cantor set in $\mathbb Z_3.$
\end{remark}

The following result is the analog (for the nonarchimedean Cantor string $\mathcal {CS}_3$) of Theorem \ref{Cantor}. It provides a precise description of $\mathcal{CS}_3$ as a  countable disjoint union of 3-adic intervals. 
It also admits an archimedean counterpart, for the ternary Cantor string 
$\mathcal {CS}$ (which we have omitted to state in \S\ref{CS}). 
As we shall see in the more general context of \S\ref{sss},  the property described in that theorem follows from the self-similartiy of the nonarchimedean Cantor string $\mathcal {CS}_3$; see Figure 6.

 \begin{theorem}\label{G}
 With the same notation as in Theorem ~\ref{Cantor}, we have that
\[
 \mathcal{CS}_3=\bigcup_{\alpha=0}^{\infty} \bigcup_{w\in W_{\alpha}} \Phi_w (1+3 \mathbb{Z}_3).
 \]
 \end{theorem}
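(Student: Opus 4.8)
The plan is to derive this decomposition of $\mathcal{CS}_3$ directly from the analogous decomposition of $\mathcal{C}_3$ given in Theorem~\ref{Cantor}, using the self-duplicating structure \eqref{decomposition} of $\mathbb{Z}_3$ at each stage of the iteration. First I would record the basic ``one-step'' identity: for every word $w\in W_\alpha$, the $3$-adic interval $\Phi_w(\mathbb{Z}_3)$ decomposes as
\[
\Phi_w(\mathbb{Z}_3)=\Phi_w(0+3\mathbb{Z}_3)\ \cup\ \Phi_w(1+3\mathbb{Z}_3)\ \cup\ \Phi_w(2+3\mathbb{Z}_3)
=\Phi_{w1}(\mathbb{Z}_3)\ \cup\ \Phi_w(1+3\mathbb{Z}_3)\ \cup\ \Phi_{w2}(\mathbb{Z}_3),
\]
a disjoint union, where I have used that $\Phi_1(\mathbb{Z}_3)=0+3\mathbb{Z}_3$ and $\Phi_2(\mathbb{Z}_3)=2+3\mathbb{Z}_3$ (by \eqref{cantor maps}), together with the fact that each $\Phi_{w_j}$ is an affine bijection onto its image, so it carries the partition \eqref{decomposition} of $\mathbb{Z}_3$ to a partition of $\Phi_w(\mathbb{Z}_3)$, and concatenation $w\mapsto wj$ matches $\Phi_{wj}=\Phi_j\circ\Phi_w$ under the composition convention of Theorem~\ref{Cantor}. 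The ``removed middle third'' at level $\alpha$ attached to $w$ is precisely $\Phi_w(1+3\mathbb{Z}_3)$.

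Next I would run the telescoping argument. Set $U_\alpha:=\bigcup_{w\in W_\alpha}\Phi_w(\mathbb{Z}_3)$, so $U_0=\mathbb{Z}_3$ and, by the one-step identity summed over all $w\in W_\alpha$, one gets the disjoint decomposition
\[
U_\alpha \;=\; U_{\alpha+1}\ \cup\ \bigcup_{w\in W_\alpha}\Phi_w(1+3\mathbb{Z}_3),
\]
since $\{wj : w\in W_\alpha,\ j\in\{1,2\}\}=W_{\alpha+1}$. Iterating from $\alpha=0$ to any finite $N$ yields
\[
\mathbb{Z}_3 \;=\; U_{N}\ \cup\ \bigcup_{\alpha=0}^{N-1}\ \bigcup_{w\in W_\alpha}\Phi_w(1+3\mathbb{Z}_3),
\]
all unions disjoint. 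By Theorem~\ref{Cantor}, $\bigcap_{N}U_N=\mathcal{C}_3$, and the $U_N$ are nested decreasingly; hence passing to the limit $N\to\infty$ gives $\mathbb{Z}_3=\mathcal{C}_3\ \cup\ \bigcup_{\alpha=0}^\infty\bigcup_{w\in W_\alpha}\Phi_w(1+3\mathbb{Z}_3)$, a disjoint union, and therefore
\[
\mathcal{CS}_3=\mathbb{Z}_3\setminus\mathcal{C}_3=\bigcup_{\alpha=0}^\infty\bigcup_{w\in W_\alpha}\Phi_w(1+3\mathbb{Z}_3),
\]
as claimed.

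The one genuinely careful point — and the step I expect to require the most attention — is the disjointness bookkeeping: I must check that the intervals $\Phi_w(1+3\mathbb{Z}_3)$ appearing at different levels $\alpha$, and for different words $w$ at the same level, are pairwise disjoint, and that they are disjoint from $\mathcal{C}_3$. Within a fixed level this follows from the disjointness of the partition \eqref{decomposition} pushed forward by the injective maps $\Phi_w$; across levels it follows because at each stage the newly removed pieces lie inside the pieces $\Phi_w(\mathbb{Z}_3)$ that survived, while all later pieces lie inside $U_{\alpha+1}$, which is disjoint from the level-$\alpha$ removed pieces by the one-step identity. Disjointness from $\mathcal{C}_3$ is immediate since $\mathcal{C}_3=\bigcap_N U_N\subset U_{\alpha+1}$ for every $\alpha$. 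Everything else is routine. (One could alternatively give a digit-based proof using Theorem~\ref{02}: $\mathcal{CS}_3$ consists of those $\tau=\sum a_j3^j\in\mathbb{Z}_3$ with some digit $a_j=1$; letting $\alpha$ be the position of the first such digit identifies $\tau$ with a unique interval $\Phi_w(1+3\mathbb{Z}_3)$, $w\in W_\alpha$ — but the geometric telescoping argument above is cleaner and fits the self-similar framework of \S\ref{sss}.)
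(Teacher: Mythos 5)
Your telescoping argument is correct and is essentially the route the paper intends: Theorem \ref{G} is presented as a consequence of the self-similarity of $\mathcal{CS}_3$ (it is the case $N=2$, $K=1$ of the general decomposition in Theorem \ref{complementary}(ii)), with the detailed proof deferred to [\ref{LapLu1}]. One harmless bookkeeping slip: under the paper's convention $\Phi_{wj}=\Phi_j\circ\Phi_w$ (last letter applied outermost), the set $\Phi_w(0+3\mathbb{Z}_3)=\Phi_w(\Phi_1(\mathbb{Z}_3))$ is $\Phi_{1w}(\mathbb{Z}_3)$ for the word with $1$ \emph{prepended}, not $\Phi_{w1}(\mathbb{Z}_3)$ --- since $\{jw : w\in W_\alpha,\ j\in\{1,2\}\}=W_{\alpha+1}$ as well, your identity $U_\alpha=U_{\alpha+1}\cup\bigcup_{w\in W_\alpha}\Phi_w(1+3\mathbb{Z}_3)$ and everything downstream still hold.
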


 \begin{figure}[h]
\psfrag{dots}{$\vdots$}
\psfrag{Z3}{$\mathbb Z_3$}
\psfrag{Z0}{$0+\mathbb Z_3$}
\psfrag{Z1}{$1+3\mathbb Z_3=G$}
\psfrag{Z2}{$2+\mathbb Z_3$}
\psfrag{1}{$0+9\mathbb Z_3$}
\psfrag{2}{$\Phi_1(G)$}
\psfrag{3}{$6+9\mathbb Z_3$}
\psfrag{4}{$2+9\mathbb Z_3$}
\psfrag{5}{$\Phi_2(G)$}
\psfrag{6}{$8+9\mathbb Z_3$}
\raisebox{-1cm}
{\psfig{figure=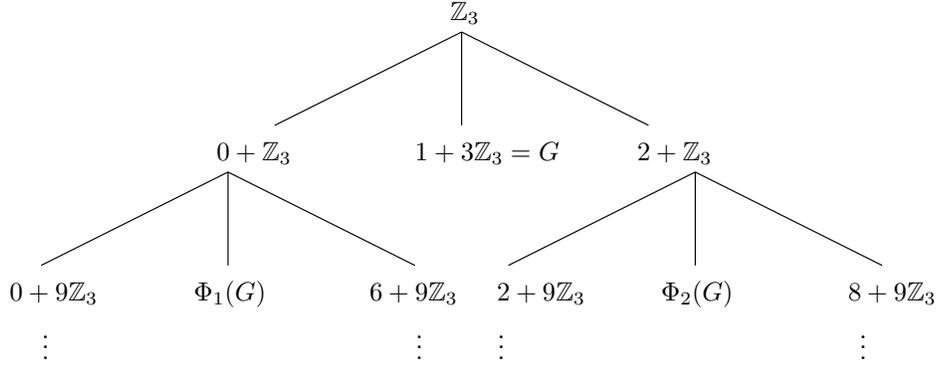, height=4.9cm}}
\caption{Construction of the nonarchimedean Cantor string $\mathcal {CS}_3$ via an IFS.}
\end{figure}

 The counterpart for the nonarchimedean Cantor string $\mathcal {CS}_3$ of the explicit tube formula (\ref{tube formula of CS}) for the archimedean Cantor string $\mathcal {CS}$ is given by
 \begin{equation}\label{tfCS3}
V_{\mathcal{CS}_3}(\varepsilon)=
\frac{1}{6\log3} \sum_{\omega \in \mathcal D_{\mathcal {CS}_3}}
\frac{\varepsilon^{1-\omega  }}{1-\omega},
\end{equation}
where $\mathcal D_{\mathcal {CS}_3}$ is given by (\ref{vcdCS3}).

\begin{remark}
With the exception of the fractal tube formula (\ref{tfCS3}) for $\mathcal {CS}_3$, which is derived in [\ref{LapLu3}], all of the results stated in \S\ref{CSS} are obtained in [\ref{LapLu1}], where the interested reader can find their detailed proofs. 
\end{remark}
\begin{remark}
The precise definition of the volume of the tubular neighborhood of a $p$-adic fractal string (and in particular, of the 3-adic Cantor string) will be given in Definition \ref{thin inner tube} (which makes use of  Definition \ref{volume definition}) of \S\ref{inner tube}. \end{remark}

\subsection{A Comparative Study of the  Real and 3-Adic Cantor Strings}\label{comparison}
A glance at Equations (\ref{cdCS}) and (\ref{vcdCS3}) shows that the archimedean and nonarchimedean Cantor strings $\mathcal {CS}$ and $\mathcal {CS}_3$ have the exact same set of complex dimensions:
\begin{equation}
\mathcal D_{\mathcal {CS}}=\mathcal D_{\mathcal {CS}_3}=\{   D+i \nu \mathbf{p}~|~\nu \in \mathbb{Z} \};\end{equation}
in particular, they have the same Minkowski dimension $D=\log_32$ and the same oscillatory period $\mathbf{p}=2\pi/\log3.$
In the present case, the complex dimension are in arithmetic progression (with period $\mathbf{p}$) along a single vertical line, 
$\Re(s)=D$, and they are simple (i.e., they are simple poles of the geometric zeta function).

We will see in \S\ref{Periodicity} and \S\ref{zeros and poles} that $p$-adic self-similar strings (of which $\mathcal {CS}_3$ is the simplest, nontrivial example) are always lattice, in a strong sense, which implies that their complex dimensions are periodically distributed along finitely many vertical lines, beginning with the rightmost line $\Re(s)=D,$ where $D$ is both the abscissa of convergence and the Minkowski dimension of the string. 

We next focus our attention on the fractal tube formulas for $\mathcal {CS}$ and $\mathcal {CS}_3$, as given by (\ref{tube formula of CS}) and (\ref{tfCS3}), respectively.\footnote{For now, we neglect the lower order term $2\varepsilon$ in (\ref{tube formula of CS}), which does not have a counterpart in (\ref{tfCS3})} 
In each case, one sums over the complex dimensions $\omega$ in 
$\mathcal D_{\mathcal{CS}}=\mathcal D_{\mathcal{CS}_3}$ a certain expression of $\varepsilon$ and 
$\omega$; namely, 
\[\frac{(2\varepsilon)^{1-\omega}}{(2\log3)\omega(1-\omega)} \quad \mbox{or}  \quad \frac{\varepsilon^{1-\omega  }}{(6\log3)(1-\omega)},\]
which can be interpreted as the residue at $s=\omega$ of the so-called `tubular zeta function' of   $\mathcal {CS}$ or $\mathcal {CS}_3$, respectively
(cf. [\ref{LapPe1}--\ref{LPW}] and Remark \ref{tzf} in \S\ref{explicit tf}).

Alternatively, since $\zeta_{\mathcal{CS}}(s)=\zeta_{\mathcal{CS}_3}(s),$ in light of (\ref{zetaCS}) and (\ref{zetaCS3}),\footnote{Here and thereafter, we denote by $res(f(s); \omega)$ (or $res(f; \omega)$, when no ambiguity may arise) the residue of a meromorphic function $f$ at the pole $s=\omega$.} 
\begin{equation}\label{residueCS}
res(\zeta_{\mathcal{CS}}; \omega)=res(\zeta_{\mathcal{CS}_3}; \omega)=\frac{1}{2\log3},
\end{equation}
for every $\omega\in \mathcal D_{\mathcal{CS}}=\mathcal D_{\mathcal{CS}_3}$, we can rewrite (\ref{tube formula of CS}) and (\ref{tfCS3}) as follows:
\begin{equation}\label{2.14}
V_{\mathcal{CS}}(\varepsilon)+2\varepsilon=\sum_{\omega\in\mathcal{D}_{\mathcal{CS}}}
res(\zeta_{\mathcal{CS}}; \omega)\frac{(2\varepsilon)^{1-\omega}}{\omega(1-\omega)}
\end{equation}
and
 \begin{equation}\label{2.15}
V_{\mathcal{CS}_3}(\varepsilon)=3^{-1}
\sum_{\omega \in \mathcal D_{\mathcal {CS}_3}}res(\zeta_{\mathcal{CS}_3}; \omega)
\frac{\varepsilon^{1-\omega  }}{1-\omega}
\end{equation}
(recall that for $\mathcal{CS}_3$, $p=3$ is the underlying prime).

Finally, we provide an additional reformulation of the tube formulas  (\ref{tube formula of CS}) and (\ref{tfCS3}). This reformulation will make apparent the role played by the complex dimensions. Namely, the \emph{real parts} of the complex dimensions govern the \emph{amplitudes} of the underlying \emph{oscillations}, while their \emph{imaginary parts} are directly linked with the \emph{frequencies} of these oscillations. More specifically, (\ref{tube formula of CS}) or (\ref{2.14}) implies that 
\begin{equation}\label{1}
(2\varepsilon)^{-(1-D)}(V_{\mathcal{CS}}(\varepsilon)+o(1))=G_{\mathcal{CS}}(\log_3\varepsilon^{-1}),
\end{equation}
where $o(1)\rightarrow 0$ as $\varepsilon \rightarrow 0^+$ and 
\begin{equation}\label{2}
G_{\mathcal{CS}}(x):= \frac{1}{2\log3}\sum_{n\in \mathbb Z}
\frac{e^{2\pi inx}}{(D+in\mathbf{p})(1-D-in\mathbf{p})}
\end{equation}
is a bounded, nonconstant periodic function of period 1 on $\mathbb R$.\footnote{Actually, $G_{\mathcal{CS}}$ is bounded away from zero and from infinity; see [\ref{LapPo}] and [\ref{L-vF2}, Fig. 2.6 and \S2.3.1].}

Similarly, (\ref{tfCS3}) or (\ref{2.15}) becomes 
\begin{equation}\label{3}
\varepsilon^{-(1-D)}V_{\mathcal{CS}_3}(\varepsilon)=G_{\mathcal{CS}_3}(\log_3\varepsilon^{-1}),
\end{equation}
where
\begin{equation}\label{4}
G_{\mathcal{CS}_3}(x):= \frac{3^{-1}}{2\log3}\sum_{n\in \mathbb Z}
\frac{e^{2\pi inx}}{1-D-in\mathbf{p}}
\end{equation}
is a  nonconstant periodic function of period 1 on $\mathbb R$. 

In light of (\ref{1})--(\ref{2}) and (\ref{3})--(\ref{4}), it is clear that neither the limit (as $\varepsilon \rightarrow 0^+$) of 
$\varepsilon^{-(1-D)}V_{\mathcal{CS}}(\varepsilon)$
nor the limit of
$\varepsilon^{-(1-D)}V_{\mathcal{CS}_3}(\varepsilon)$
exists. 
Hence, neither the archimedean Cantor string $\mathcal{CS}$ nor the nonarchimedean Cantor string 
$\mathcal{CS}_3$ is Minkowski measurable (see  [\ref{Lap}, \ref{LapLu3}, \ref{LapPo}, \ref{L-vF2}] and \S\ref{Mdimension}).
However, by suitably averaging the left-hand side of (\ref{1}) over a large number of periods of $G_{\mathcal{CS}}$, one can show that the appropriately defined average Minkowski content of $\mathcal{CS}$ exists and is given by 
\[
\mathcal M_{av}(\mathcal {CS})=\frac{2^{-D}}{(1-D)\log2};
\]
cf. [\ref{L-vF2}, Rem. 8.35].\footnote{The minor discrepancy between the value of 
$\mathcal M_{av}(\mathcal {CS})$ given here and that of  [\ref{L-vF2}, Rem. 8.35] is due to the fact that $\mathcal {CS}$ is defined as in [\ref{L-vF2}, \S1.1.2]  and not as in [\ref{L-vF2}, \S2.3.1]; in particular, it has total length 1 rather than 3.}
Similarly, by suitably averaging the left-hand side of (\ref{3}) over infinitely many periods of $G_{\mathcal{CS}_3}$, one shows that the  average Minkowski content of $\mathcal{CS}_3$ exists and is given by 
\[
\mathcal M_{av}(\mathcal {CS}_3)=\frac{1}{6(\log3-\log2)};
\]
see Definition \ref{defcontent} and Example \ref{average content CS3} in \S\ref{amc} below.

\section{$p$-Adic Fractal Strings}\label{pfs}
 Let  $ \Omega$ be a bounded open subset of $\mathbb{Q}_p$.  Then it can be decomposed into a countable union of disjoint open balls \footnote{We shall often call a $p$-adic ball an \emph{interval}.  By `ball' here, we mean a metrically closed and hence, topologically open (and closed) ball.}
 with radius $p^{-n_j}$ centered at $a_j\in \mathbb Q_p$, 
 \[ a_j + p^{n_j} \mathbb{Z}_p=B(a_j, p^{-n_j})=\{x\in \mathbb{Q}_p ~|~ |x- a_j|_p \le p^{-n_j}\},\]
  where $n_j \in \mathbb{Z}$ and $j\in \mathbb N^{*}$. 
  There may be many different such decompositions since each ball can always be decomposed into smaller disjoint balls [\ref{Kob}]; see Equation (\ref{decomposition}).
 However, there is  a canonical decomposition of $\Omega$ into disjoint balls with respect to a suitable equivalence relation, as we now explain.
  
  \begin{definition}
 Let $U$ be an open subset of $\mathbb Q_p$. Given $x,y \in U,$ we write that $x\sim y$ if and only if there is a 
 ball $B\subseteq U$ such that $x, y \in B$.
 \end{definition}
 It is clear from the definition that the relation $\sim$ is reflexive and symmetric. To prove the transitivity, let $x \sim y$ and $y\sim z$. Then there are balls $B_1$ containing $x, y$ and $B_2$ containing $y, z$. Thus $y\in B_1 \cap B_2$; so it follows from the ultrametricity of $\mathbb Q_p$ that either 
  $B_1 \subseteq B_2$ or $B_2 \subseteq B_1.$   
  In any case, $x$ and $z$ are contained in the same ball; so $x\sim z$. Hence, the above relation
  $\sim$ is indeed an equivalence relation on the open set $U$. By a standard argument (and since $\mathbb Q$ is dense in $\mathbb Q_p$), one shows that there are at most countably many equivalence classes. 
  
  \begin{remark} \label{convex component}(Convex components)
  The equivalence classes of $\sim$ can be thought of as the `convex components' of $U$.  They are an appropriate substitute in the present nonarchimedean context for the notion of connected components, which is not useful in $\mathbb Q_p$ since $\mathbb Z_p$ (and hence, every interval) is totally disconnected.  Note that given any $x\in U,$ the equivalence class (i.e., the \emph{convex component}) of $x$ is the largest  ball containing $x$ (or equivalently, centered at $x$) and contained in $U$. 
    \end{remark}
  
   \begin{definition}\label{p-adic string}
 A \emph{$p$-adic} (or \emph{nonarchimedean}) fractal string
 $ \mathcal{L}_p$ is a bounded open subset $\Omega$ of $\mathbb{Q}_p$. 
 \end{definition}
 Thus it can be written, relative to the above equivalence relation, canonically as a disjoint union of  intervals or balls: 
 \[ \mathcal{L}_p=\bigcup_{j=1}^{\infty} (a_j + p^{n_j} \mathbb{Z}_p)=\bigcup_{j=1}^{\infty} B(a_j, p^{-n_j}). \]
 Here, $B(a_j, p^{-n_j})$ is the largest ball centered at $a_j$ and contained in $\Omega$.
We may assume that the lengths (i.e., Haar measure) of the intervals 
$a_j + p^{n_j} \mathbb{Z}_p$ are nonincreasing, by reindexing if necessary.  That is, 
\begin{equation}\label{sequence of lengths}
p^{-n_1}\geq p^{-n_2} \geq p^{-n_3} \geq \cdots >0.
\end{equation}

\begin{definition}\label{zetaLp}
The\emph{ geometric zeta function} of a $p$-adic fractal string $\mathcal{L}_p$ is defined as
\begin{equation} \label{zeta}
\zeta_{\mathcal{L}_p} (s) = \sum_{j=1}^{\infty} (\mu_H (a_j + p^{n_j} \mathbb{Z}_p))^s
= \sum_{j=1}^{\infty} p^{-n_js} 
\end{equation}
for $\Re(s)$ sufficiently large.
 \end{definition}

\begin{remark}
The geometric zeta function $\zeta_{\mathcal{L}_p}$ is well defined since the decomposition of 
${\mathcal{L}_p}$ into the disjoint intervals $a_j + p^{n_j}\mathbb Z_p$ is unique. Indeed, these intervals are the equivalence classes of which the open set $\Omega$ (defining $\mathcal L_p$) is composed. In other words, they are the $p$-adic  ``convex components'' (rather than the connected components) of $\Omega$. Note that in the real (or archimedean) case, there is no difference between the convex or connected components of $\Omega$, and hence the above construction would lead to the same sequence of lengths as in [\ref{L-vF2}, \S1.2].  
\end{remark}

\begin{figure}[h]\label{screen window}
\psfrag{dots}{$\dots$}
\psfrag{S}{$S$}
\psfrag{W}{$W$}
\psfrag{D}{$D$}
\psfrag{0}{$0$}
\psfrag{1}{$1$}
\raisebox{-1cm}
{\psfig{figure=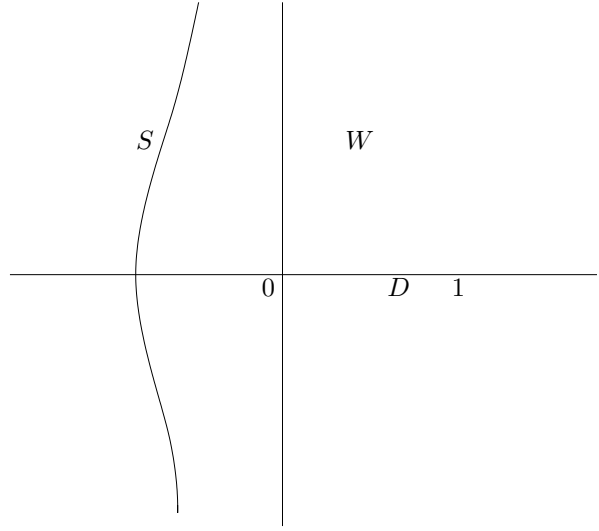, height=7cm}}
\caption{The screen $S$ and the window $W$.}
\end{figure}
  
The \emph{screen} $S$ is the graph \footnote{With the vertical and horizontal axes interchanged.} of a real-valued, bounded and Lipschitz continuous function $S(t)$:
\[
S=\{S(t) + it ~|~ t\in \mathbb R\}.
\]
The \emph{window} $W$ is the part of the complex plane to the right of the screen $S$: 
(see Figure 7): 
\[
W=\{s\in \mathbb C ~|~ \Re(s)\geq S(\Im (s))\}.
\]
Let 
\[
\inf S=\inf_{t\in \mathbb R} S(t) \quad \mbox{and} \quad \sup S=\sup_{t\in \mathbb R}S(t), 
\]
and assume that $\sup S \leq \sigma,$ where $\sigma=\sigma_{\mathcal L_p}$ is the abscissa of convergence of $\mathcal L_p$ (to be precisely defined in (\ref{sigma}) below).

  \begin{definition}\label{dvcd} 
  If $\zeta_{\mathcal{L}_p}$ has a meromorphic continuation to an open connected neighborhood of $W\subseteq \mathbb C$, then 
\begin{equation}\label{vcd}
 \mathcal D_{\mathcal L_p}(W)=\{\omega \in W ~|~ \omega \mbox{ is a pole of} ~ \zeta_{\mathcal{L}_p}\} 
 \end{equation}
is called the set of \emph{visible complex dimensions} of $\mathcal L_p$.  
If no ambiguity may arise or  if $W=\mathbb C$,  we simply write $\mathcal D_{\mathcal L_p}=\mathcal{D}_{\mathcal{L}_p}(W)$ and call it the set of \emph{complex dimensions} of $\mathcal L_p$. 

Moreover, the \emph{abscissa of convergence} of $\mathcal L_p$ (or rather, of the Dirichlet series initially defining $\zeta_{\mathcal L_p}$ in Equation (\ref{zeta})) is denoted by
$\sigma=\sigma_{\mathcal L_p}$.
Recall that it is defined by \footnote{See, e.g., [\ref{Ser}].} 
\begin{equation}\label{sigma}
\sigma_{\mathcal L_p}=\inf\left\{\alpha \in \mathbb R ~| ~ \sum_{j=1}^{\infty}p^{-n_j\alpha} <\infty\right\}.
\end{equation}
\end{definition}

\begin{remark}
In particular, if $\zeta_{\mathcal L_p}$ is entire (which occurs only in the trivial case when 
$\mathcal L_p$ is given by a finite union of intervals), then $\sigma_{\mathcal L_p}=-\infty.$
Otherwise, $\sigma_{\mathcal L_p}\geq 0$ (since $\mathcal L_p$ is composed of infinitely many intervals) and we will see in Theorem \ref{D=sigma} that $\sigma_{\mathcal L_p}<\infty$ since $\sigma_{\mathcal L_p}\leq D_M\leq 1,$ where $D_M=D_{M,\mathcal L_p}$ is the Minkowski dimension of $\mathcal L_p$.
Furthermore, it will follow from Theorem \ref{D=sigma} that for a nontrivial $p$-adic fractal  string, 
$\sigma_{\mathcal L_p}=D_M.$ This is the case, for example, for the 3-adic Cantor string introduced in 
\S\ref{CS3}, for which $\sigma=D_M=\log_32$.

Observe that since $ \mathcal D_{\mathcal L_p}(W)$ is defined as a subset of the poles of a meromorphic function, it is at most countable. 

Finally, we note that it is well known that $\zeta_{\mathcal L_p}$ is holomorphic for $\Re(s)>\sigma_{\mathcal L_p};$ see, e.g., [\ref{Ser}]. Hence, 
\[
\mathcal D_{\mathcal L_p} \subset \{s\in \mathbb C~|~ \Re(s)\leq \sigma_{\mathcal L_p}\}.
\]
\end{remark}

\begin{remark}[Archimedean fractal strings]\label{afs}
\emph{Archimedean} or \emph{real} fractal strings are defined as bounded open subsets of the real line $\mathbb R=\mathbb Q_{\infty}.$ They were initially defined in [\ref{LapPo}], following an early example in [\ref{Lap1}], and have been used extensively in a variety of settings; see, e.g., [\ref{Elmr}, 
\ref{HeLap}--\ref{HerLap},  \ref{Lap1}--\ref{Lap3}, \ref{Llr}, \ref{LapMa}--\ref{LapRo1}, \ref{Pe}] and the books [\ref{L-vF1}, \ref{L-vF2}, \ref{Lap2}].
Since an open set $\Omega\subset \mathbb R$ is canonically equal to the disjoint union of finitely or countably many open and bounded intervals (namely, its connected components), say $\Omega=\bigcup_{j=1}^{\infty} I_j,$ we may also describe a real fractal string by a sequence of lengths $\mathcal L=\{l_j\}_{j=1}^{\infty},$ where $l_j=\mu_L(I_j)$ is the length or 1-dimensional Lebesgue measure of the interval $I_j$, written in nonincreasing order:\footnote{A justification for this identification is provided by the formula for the volume $V_{\mathcal L}(\varepsilon)$ of $\varepsilon$-inner tubes of $\Omega$, as given by Equation (\ref{LPformula}) below.}
\[l_1\geq l_2\geq l_3 \geq \ldots.\]
Note that since $\mu_L(\Omega) < \infty, l_j \rightarrow 0$ as $j\rightarrow \infty$ (except in the trivial case when $\Omega$ consists of finitely many intervals).\footnote{Also observe that the 1-dimensional Lebesgue measure $\mu_L$ is nothing but the Haar measure on $\mathbb R=\mathbb Q_{\infty}$, normalized so that $\mu_L([0,1])=1.$} 

All the definitions given above for $p$-adic fractal strings have a natural counterpart for real fractal strings. For instance, the geometric zeta function of $\mathcal L$ is initially defined by 
\begin{equation}\label{zetaL}
\zeta_{\mathcal L}(s)=\sum_{j=1}^{\infty}(\mu_L(I_j))^s=\sum_{j=1}^{\infty} l_j^s,
\end{equation}
for $\Re(s) > \sigma_{\mathcal L},$ the abscissa of convergence of $\mathcal L$, and for a given screen $S$  and associated window $W$, the set 
$\mathcal D_{\mathcal L}=\mathcal D_{\mathcal L}(W)$ of visible complex dimensions of $\mathcal L$ is given exactly as in (\ref{vcd}) of Definition \ref{dvcd}, except with $\mathcal L_p$ and $\zeta_{\mathcal L_p}$
replaced with $\mathcal L$ and $\zeta_{\mathcal L}$, respectively. Similarly, $\sigma_{\mathcal L}$, the abscissa of convergence of $\mathcal L$ is given as in (\ref{sigma}), except with the lengths of $\mathcal L$ instead of those of  $\mathcal L_p.$
Moreover, it follows from [\ref{L-vF2}, Thm. 1.10] that for any nontrivial real fractal string $\mathcal L$, we have $\sigma_{\mathcal L}=D_M$, the Minkowski dimension of $\mathcal L$ (i.e., of its topological boundary $\partial \Omega$).

We refer the interested reader to the research monographs [\ref{L-vF1}, \ref{L-vF2}] for a full development of the theory of real fractal strings and their complex dimensions.
\end{remark}

\subsection{$p$-Adic Euler String}\label{Euler string}
The following $p$-adic Euler string  is a new example of $p$-adic fractal string, which is not self-similar (in the sense of \S \ref{sss}). It is a natural $p$-adic counterpart of  the \emph{elementary prime string},  which is the \emph{local} constituent of the \emph{completed  harmonic string}; cf. [\ref{L-vF2}, \S4.2.1].    

Let $X=p^{-1}\mathbb{Z}_p$. Then, by the `self-duplication' formula (\ref{decomposition}),
\[ 
 X= \bigcup_{\xi=0}^{p-1} (\xi p^{-1} + \mathbb{Z}_p).
  \]
We now keep the first subinterval $\mathbb{Z}_p$, and then decompose the next subinterval further. That is, we write
   \[ p^{-1} + \mathbb{Z}_p= \bigcup_{\xi=0}^{p-1} (p^{-1} + \xi  + p\mathbb{Z}_p).\]   
Again, iterating this process, we keep the first subinterval $p^{-1} + p\mathbb{Z}_p$ in the above decomposition and decompose the next subinterval,
   $p^{-1} + 1 + p\mathbb{Z}_p$. 
   Continuing in this fashion, we obtain an infinite sequence of disjoint subintervals 
   $\{ a_n + p^n \mathbb{Z}_p\}_{n=0}^{\infty},$ where
   $\{a_n\}_{n=0}^{\infty}$ satisfies the following initial condition and recurrence relation: 
   \[a_0 = 0 \quad  \mbox{and} \quad  a_n=a_{n-1} + p^{n-2} \quad \mbox{for all} ~n\geq 1.\] 
   We call the corresponding $p$-adic fractal string,
   \[
   \mathcal{E}_p=\bigcup_{n=0}^{\infty} (a_n + p^n \mathbb{Z}_p),
   \]
   the  \emph{$p$-adic Euler string.} (See Figure 8.)
 \begin{figure}[ht]
\psfrag{X}{$p^{-1}\mathbb Z_p$}    
\psfrag{hdot}{$\cdots$}
\psfrag{z1}{$\mathbb Z_p$}
\psfrag{z2}{$p^{-1}+\mathbb Z_p$}
\psfrag{z3}{$(p-1)p^{-1}+\mathbb Z_p$}
\psfrag{z4}{$p^{-1}+p\mathbb Z_p$}
\psfrag{z5}{$p^{-1}+1+p\mathbb Z_p$}
\psfrag{z6}{$p^{-1}+p-1+p\mathbb Z_p$}
\psfrag{vdot}{$\vdots$}
\raisebox{-1cm}
{\psfig{figure=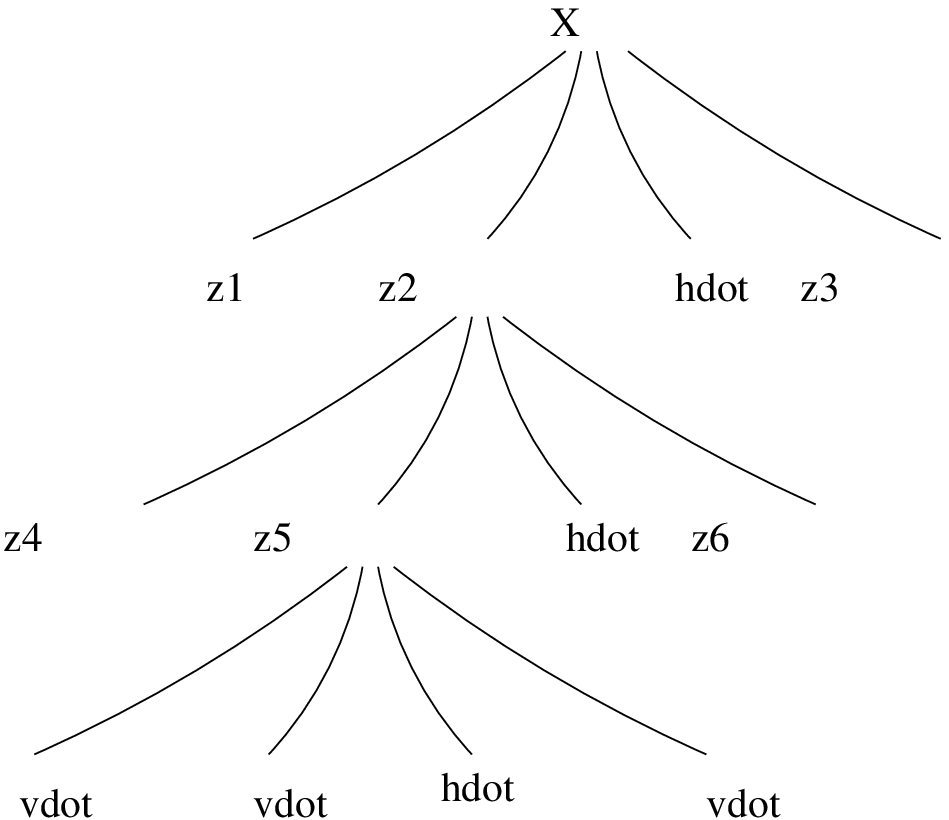, height=6 cm}}
\caption{Construction of the $p$-adic Euler string $\mathcal{E}_p$.}
\end{figure}

The geometric zeta function of the $p$-adic Euler string   $\mathcal{E}_p$  is
\begin{equation*}\label{euler factor}
\zeta_{\mathcal{E}_p} (s) = \sum_{n=0}^{\infty} (\mu_H( a_n + p^n \mathbb{Z}_p)) ^s 
= \sum_{n=0}^{\infty} p^{-ns}=
\frac{1}{1-p^{-s}},  \hspace{1cm} \mbox{for}\,\, \Re(s) > 0.
\end{equation*}
Therefore, $\zeta_{\mathcal{E}_p}$ has a meromorphic extension to all of $\mathbb C$ given by the last expression, which is the classic $p^{th}$-\emph{Euler factor}:
\begin{equation}\label{ptheuler}
\zeta_{\mathcal{E}_p} (s) =
\frac{1}{1-p^{-s}},  \hspace{1cm} \mbox{for  } \,s\in \mathbb C.
\end{equation}
 Hence, the set of complex dimensions of $\mathcal{E}_p$  is given by
\begin{equation}\label{cdes}
 \mathcal{D}_{\mathcal{E}_p}=\{ D + i\nu\textbf{p}~ |~ \nu \in \mathbb{Z}\}, 
 \end{equation}
  where $D=\sigma=0$ and $\textbf{p}=2\pi /{\log{p}}.$

\begin{remark}[Ad\`elic Euler string] 
Note   that  $\zeta_{\mathcal{E}_p}$ is  the $p^{th}$-Euler factor of the Riemann zeta function; i.e., 
\begin{equation*}\label{Riemann}
\prod_{p<\infty} \zeta_{\mathcal{E}_p} (s)
=\prod_{p<\infty} \frac{1}{1-p^{-s}}= \sum_{n=1}^{\infty} \frac{1}{n^s} =\zeta(s) \hspace{1cm}  \mbox{for} \,\,\Re(s) > 1.
\end{equation*}Recall that the meromorphic continuation $\xi$ of the Riemann zeta function $\zeta$ has the same (critical) zeros as $\zeta$ and satisfies the functional equation $  \xi (s)= \xi (1-s)$.

We hope to form a certain `ad\`elic product'   over all   $p$-adic Euler strings (including the prime at infinity) so that the geometric zeta function of the resulting ad\`elic Euler string $\mathcal E$ is the \emph{completed} Riemann zeta function.  Formally, the ad\`elic Euler string may be written as 
\[\mathcal{E}=\bigotimes_{p\leq \infty} \mathcal{E}_p\] and its geometric zeta function 
$\zeta_{\mathcal E}(s)$ would then coincide with the completed Riemann zeta function $\xi$
(see [\ref{Rie}] and, e.g., [\ref{Edw}]):
  \[ \zeta_{\mathcal E}(s)=\xi(s):=\pi^{-s/2}\Gamma(s/2)\prod_{p<\infty} \frac{1}{1-p^{-s}}.\] 
  \end{remark}
  
  \begin{remark}[Comparison with the archimedean theory]\label{harmonic string}
  From the geometric point of view, the nonarchimedean Euler string $\mathcal E_p$ is more natural than its archimedean counterpart, the $p^{th}$ elementary prime string $h_p$, described in 
  [\ref{L-vF2}, \S4.2.1]. 
Indeed, as we have just seen, $\mathcal E_p$ has a very simple geometric definition. Since, by construction, $\mathcal E_p$ and $h_p$ have the same sequence of lengths $\{p^{-n}\}_{n=0}^{\infty},$
they have the same geometric zeta function, namely, the $p^{th}$ Euler factor
\begin{equation}
\zeta_p(s):=\frac{1}{1-p^{-s}}
\end{equation}
of the Riemann zeta function $\zeta(s),$ and hence, the same set of complex dimensions
\begin{equation}
 \mathcal{D}_p=\left\{i\nu \frac{2\pi}{\log p}~ |~ \nu\in \mathbb{Z}\right\}.
 \end{equation}  
 An `ad\`elic version' of the `harmonic string' $h$, a generalized fractal string whose geometric zeta function is $\zeta_h(s)=\zeta(s)$,  or rather, of its completion $\tilde{h}$ 
 (so that $\zeta_{\tilde h}(s)=\xi(s)$), is provided in [\ref{L-vF2}, \S4.2.1].
 In particular, with each term being interpreted as a positive measure on $(0,\infty)$ and the symbol $\ast$ denoting multiplicative convolution on $(0,\infty)$, we have that 
 \begin{equation}
 h=*_{p<\infty}h_p \quad \mbox{and} \quad  \tilde h=\ast_{p\leq\infty}h_p. 
 \end{equation}
  
 Furthermore, a noncommutative geometric version of this construction is provided in [\ref{Lap2}] in terms of the `prime fractal membrane'; see especially, [\ref{Lap2}, Chaps. 3 and 4], along with [\ref{LapNe}].
 Heuristically, a `fractal membrane' (as introduced in [\ref{Lap2}]) is a kind of ad\`elic, noncommutative torus of infinite genus. It can also be thought of as a `quantized fractal string'; see
   [\ref{Lap2}, Chap. 3].   It is rigorously constructed in [\ref{LapNe}] using Dirac-type operators, Fock spaces, Toeplitz algebras, and associated spectral triples (in the sense of [\ref{Con}]); see also [\ref{Lap2}, \S4.2].
   We hope in the future to obtain a suitable nonarchimedean version of that construction. It is possible that in the process, we will establish contact with the physically motivated work in [\ref{Drag}] involving 
   $p$-adic quantum mechanics. 
 \end{remark}
  
\subsection{Volume of Thin Inner Tubes}\label{inner tube}
 In this section, based on a part of [\ref{LapLu3}], we provide a suitable analog in the $p$-adic case of the `boundary' of a fractal string and of the associated inner tubes 
 (or  ``inner $\varepsilon$-neighborhoods''). Moreover, we give the $p$-adic counterpart of the expression that yields the volume of the inner tubes (see Theorem \ref{thin}). This result serves as a starting point in [\ref{LapLu3}] for proving the corresponding explicit tube formula. 

\begin{definition}\label{volume definition}
Given a point $a\in \mathbb Q_p$ and a positive real number $r>0$,  
let $B=B(a, r) =\{x\in \mathbb{Q}_p ~|~ |x-a|_p \le r \}$ be a \emph{metrically closed} ball in $\mathbb{Q}_p,$ as above.\footnote{Recall that it follows from the ultrametricity of $|\cdot|_p$ that $B$ is topologically both closed and open (i.e., clopen) in $\mathbb Q_p$.}
 We  call 
$S=S(a, r)=\{x\in \mathbb{Q}_p ~|~ |x-a|_p = r \}$ the \emph{sphere} of $B$.\footnote{In our sense, $S$ also coincides with the `metric boundary' of $B$, as given in this definition.}

Let $\mathcal{L}_p= \bigcup_{j=1}^{\infty} B(a_j, r_j)$ be a $p$-adic fractal string. We then define the \emph{metric boundary} $\beta\mathcal{L}_p$ of $\mathcal{L}_p$ to be the disjoint union of the corresponding spheres, i.e.,  
 \[\beta\mathcal{L}_p = \bigcup_{j=1}^{\infty} S(a_j, r_j).\]
Given a real number $\varepsilon>0$,  define the  \emph{thick $p$-adic `inner 
$\varepsilon$-neighborhood'} (or `\emph{inner tube}') of $\mathcal{L}_p$ to be
\begin{equation}\label{thick inner tube}
\mathcal{N}_{\varepsilon}=\mathcal{N}_{\varepsilon}(\mathcal L_p):=\{ x\in \mathcal{L}_p ~|~  d_p(x, \beta\mathcal{L}_p) < \varepsilon\},
\end{equation}
where $d_p(x, E)=\inf \{ |x-y|_p ~|~  y\in E\}$
is the $p$-adic distance of $x \in \mathbb{Q}_p$ to a subset $E \subset \mathbb{Q}_p$.
Then the \emph{volume $\mathcal{V}_{\mathcal L_p}(\varepsilon)$ of the thick inner 
$\varepsilon$-neighborhood} of $\mathcal{L}_p$  is defined to be the Haar measure of $\mathcal{N}_{\varepsilon}$, i.e., 
$\mathcal{V}_{\mathcal L_p}(\varepsilon)=\mu_H(\mathcal{N}_{\varepsilon}).$
\end{definition}
 
\begin{lemma}\label{ball and sphere}
Let $B=B(a,r)$ and $S=S(a,r)$, as in Definition \ref{volume definition}. Then, for any positive number $\varepsilon <r$, we have 
\begin{equation}\label{sphere}
\mathcal{N}_{\varepsilon}(B):=\{x\in B ~|~ d_p(x, S)<\varepsilon\}=S. 
\end{equation}
Hence, if $r=p^{-m}$ for some $m\in \mathbb Z,$ then for all $\varepsilon <r,$
\begin{equation}\label{spherevolume}
\mu_H(\{x\in B ~|~ d_p(x, S)<\varepsilon\})=\mu_H(S)=
(1-p^{-1})p^{-m}.
\end{equation}
\end{lemma}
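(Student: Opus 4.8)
The plan is to analyze the set $\mathcal{N}_{\varepsilon}(B) = \{x \in B \mid d_p(x,S) < \varepsilon\}$ directly using the ultrametric structure of $\mathbb{Q}_p$, the point being that in the nonarchimedean world the ``inner neighborhood'' of the sphere collapses onto the sphere itself. First I would fix $x \in B = B(a,r)$ and try to compute $d_p(x,S)$ exactly. If $x \in S$, then $d_p(x,S) = 0 < \varepsilon$, so $S \subseteq \mathcal{N}_{\varepsilon}(B)$ trivially. For the reverse inclusion, suppose $x \in B \setminus S$, so $|x - a|_p < r$ (the only other possibility for a point of $B$, since $|x-a|_p \le r$ always). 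I claim that for any $y \in S$ we then have $|x - y|_p = r$: indeed $|x - y|_p = |(y - a) - (x - a)|_p$, and since $|y - a|_p = r > |x - a|_p$, the isosceles triangle (strong triangle) inequality forces $|x - y|_p = \max\{|y-a|_p, |x-a|_p\} = r$. Hence $d_p(x, S) = r \ge \varepsilon$ (using $\varepsilon < r$), so $x \notin \mathcal{N}_{\varepsilon}(B)$. This proves $\mathcal{N}_{\varepsilon}(B) = S$, establishing (\ref{sphere}).

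For the second assertion, I would take $r = p^{-m}$ and compute $\mu_H(S)$. The sphere $S = S(a, p^{-m}) = \{x \mid |x - a|_p = p^{-m}\}$ is exactly the difference of two balls: $S = B(a, p^{-m}) \setminus B(a, p^{-(m+1)})$, since $|x-a|_p = p^{-m}$ iff $|x-a|_p \le p^{-m}$ but not $|x-a|_p \le p^{-(m+1)}$ (recall the value group of $|\cdot|_p$ is $\{p^k : k \in \mathbb{Z}\}$, so there are no intermediate values). These are nested balls, so by additivity of Haar measure together with the normalization $\mu_H(a + p^k\mathbb{Z}_p) = p^{-k}$ recorded earlier, we get
\[
\mu_H(S) = \mu_H(B(a,p^{-m})) - \mu_H(B(a,p^{-(m+1)})) = p^{-m} - p^{-(m+1)} = (1 - p^{-1})p^{-m}.
\]
Combining this with (\ref{sphere}) yields (\ref{spherevolume}).

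This argument is essentially elementary; the only thing requiring care is the clean use of the strong triangle inequality in the form ``if $|u|_p \neq |v|_p$ then $|u+v|_p = \max\{|u|_p,|v|_p\}$'' to pin down $d_p(x,S)$ exactly rather than merely bounding it, and the observation that the discreteness of the valuation group is what makes the sphere a difference of two balls (and also what makes the hypothesis $\varepsilon < r$ the natural threshold — for $\varepsilon \ge r$ the neighborhood would be all of $B$). I do not anticipate a serious obstacle; the main conceptual content is recognizing that the nonarchimedean geometry forces this collapse, which is precisely the phenomenon that will later necessitate modifying the real theory of fractal tube formulas.
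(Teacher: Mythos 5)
Your proof is correct. For the identity $\mathcal{N}_{\varepsilon}(B)=S$ you argue exactly as the paper does, just in contrapositive form: the paper takes $x\in B$ with $d_p(x,S)<\varepsilon$, produces $y\in S$ with $|x-y|_p<\varepsilon<r=|y-a|_p$, and invokes the isosceles-triangle property to conclude $|x-a|_p=r$; you instead take $x\in B\setminus S$ and use the same property to show $d_p(x,S)=r$. These are the same argument. Where you genuinely diverge is in computing $\mu_H(S)$: you exploit the discreteness of the value group to write $S(a,p^{-m})=B(a,p^{-m})\setminus B(a,p^{-(m+1)})$ and subtract the measures of two nested balls, whereas the paper first reduces to the unit sphere via $S=a+p^mS^1$ (translation and scaling invariance of $\mu_H$) and then determines $\mu_H(S^1)$ from the disjoint decomposition of $\mathbb Z_p$ into the spheres $S(0,p^{-m})$, $m\geq 0$, by summing the resulting geometric series. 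Your route is shorter and more elementary, resting only on the normalization $\mu_H(a+p^k\mathbb Z_p)=p^{-k}$ already recorded in the paper; the paper's route has the mild advantage of isolating the constant $\mu_H(S^1)=1-p^{-1}$ as a statement about the unit sphere in its own right. Your closing remark that $\varepsilon<r$ is the sharp threshold (for $\varepsilon\geq r$ one gets all of $B$) is also correct and consistent with how the lemma is used in Theorem \ref{thick}.
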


\begin{proof}
(i) Clearly $S \subseteq\{x\in B ~|~ d_p(x, S)<\varepsilon\}$   since for any $x\in S$, 
$d_p(x,S)=0$. Next, fix $\varepsilon$ with $0<\varepsilon<r$ and let $x\in B$ be such that $d_p(x, S)< \varepsilon.$ Then there must exist $y\in S$ such that $|x-y|_p<\varepsilon.$
But, since $|y-a|_p=r,$ we deduce from the fact that every ``triangle'' in $\mathbb Q_p$ is isosceles [\ref{Kob}, p. 6] that 
$|x-a|_p=|y-a|_p$ and thus $x\in S$. This completes the proof of (\ref{sphere}).  \\

(ii) We next establish formula (\ref{spherevolume}). In light of Equation (\ref{sphere}), it suffices to show that 
\begin{equation}\label{Svolume}
\mu_H(S)=(1-p^{-1})p^{-m}.
\end{equation}
Let $S^1=S(0,1)=\{x\in \mathbb Q_p~|~ |x|_p=1\}$ denote the unit sphere in $\mathbb Q_p$. Since 
$S=S(a, p^{-m})=a+p^mS^1,$ we have that $\mu_H(S)=\mu_H(S^1)p^{-m}.$
Next we note that 
\[B(0,1)= \bigcup_{m\geq 0}S(0,p^{-m})\] 
is a disjoint union. Hence,  by taking the Haar measure of $B(0,1),$ we deduce that 
\begin{equation}
  1 = \left(\sum _{m=0} ^ {\infty} p^{-m}\right) \mu_{H}(S^1) =  \frac{1}{1-p^{-1}} \mu_{H}(S^1),
  \end{equation}
from which  (\ref{Svolume}) and hence, in light of part (i), (\ref{spherevolume}) follows. 
\end{proof}

\begin{theorem}[Volume of thick inner tubes]\label{thick}
Let $ \mathcal{L}_p=\bigcup_{j=1}^{\infty}  B(a_j, p^{-n_j})$ be a $p$-adic fractal string. Then, for any $\varepsilon >0,$ we have
\begin{eqnarray}\label{volumeequation}
\mathcal{V}_{\mathcal L_p}(\varepsilon) &=&(1-p^{-1})\sum_{j=1}^k p^{-n_j} +\sum_{j>k} p^{-n_j}\label{equation 1}\\
&=& \zeta_{\mathcal L_p}(1)-\frac{1}{p} \sum_{j=1}^k   p^{-n_j},\label{equation 2}
    \end{eqnarray} 
  where $k=k(\varepsilon)$ is the largest   integer such that
 $p^{-n_k}\ge \varepsilon$.
 \end{theorem}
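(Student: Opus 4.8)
The plan is to compute the Haar measure of the thick inner tube $\mathcal{N}_\varepsilon(\mathcal{L}_p)$ by decomposing $\mathcal{L}_p$ into its constituent balls $B(a_j, p^{-n_j})$ and analyzing the contribution of each ball separately. The key structural fact is that the metric boundary $\beta\mathcal{L}_p$ is the disjoint union of the spheres $S(a_j, p^{-n_j})$, and moreover that these spheres live in \emph{disjoint} balls. So for a point $x$ in a fixed ball $B(a_j, p^{-n_j})$, the distance $d_p(x, \beta\mathcal{L}_p)$ is governed only by the sphere $S(a_j, p^{-n_j})$ of that same ball; distances to the other spheres $S(a_i, p^{-n_i})$ with $i \neq j$ are bounded below by something at least as large as $p^{-n_j}$ (since distinct convex components are separated by a gap of size at least the radius of the larger one, by ultrametricity), and in fact one checks these other distances are $\geq p^{-n_j} \geq$ the relevant threshold, so they never produce a \emph{smaller} value than what $S(a_j, p^{-n_j})$ already gives. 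Hence $\mathcal{N}_\varepsilon(\mathcal{L}_p) \cap B(a_j, p^{-n_j}) = \mathcal{N}_\varepsilon(B(a_j, p^{-n_j}))$, and we can sum the volumes.

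First I would make the separation claim precise: if $i \neq j$, then $B(a_i, p^{-n_i})$ and $B(a_j, p^{-n_j})$ are disjoint balls (convex components of $\Omega$), so $d_p(a_j, a_i) > \max\{p^{-n_i}, p^{-n_j}\}$ — for otherwise one ball would contain the other, contradicting maximality of the convex components. Consequently, for $x \in B(a_j, p^{-n_j})$ and $y \in S(a_i, p^{-n_i}) \subseteq B(a_i, p^{-n_i})$, the ultrametric inequality gives $|x - y|_p = d_p(a_j, a_i) > p^{-n_j}$. So $d_p(x, S(a_i,p^{-n_i})) > p^{-n_j}$ for all such $x$ and all $i \neq j$, meaning the cross-terms are irrelevant as soon as $\varepsilon \leq p^{-n_j}$ — and one handles the case $\varepsilon > p^{-n_j}$ separately (there the whole ball $B(a_j, p^{-n_j})$ lies within $\varepsilon$ of its own sphere, as I explain next).

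Next I would invoke Lemma \ref{ball and sphere}. For a single ball $B = B(a_j, p^{-n_j})$: if $\varepsilon < p^{-n_j}$, then $\mathcal{N}_\varepsilon(B) = S(a_j, p^{-n_j})$, which by \eqref{spherevolume} has Haar measure $(1 - p^{-1}) p^{-n_j}$; if $\varepsilon \geq p^{-n_j}$, then since $\operatorname{diam}(B) = p^{-n_j} \leq \varepsilon$ and every point of $B$ is within distance $p^{-n_j}$ of the sphere (indeed $d_p(x, S) \leq p^{-n_j} < \varepsilon$ — actually one should check $d_p(x,S) \le p^{-n_j}$, which holds since the sphere is nonempty and contained in $B$, whose diameter is $p^{-n_j}$, but the strict inequality $d_p(x,S)<\varepsilon$ needs $\varepsilon > p^{-n_j}$ or a careful boundary check; the clean statement uses $p^{-n_k} \ge \varepsilon$ as the threshold), the entire ball $B$ is in the tube, contributing $\mu_H(B) = p^{-n_j}$. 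Ordering the lengths as in \eqref{sequence of lengths}, the index $k = k(\varepsilon)$ is exactly the largest $j$ with $p^{-n_j} \geq \varepsilon$; for $j \leq k$ we get the ``full ball'' contribution $p^{-n_j}$, and for $j > k$ we get the ``sphere only'' contribution $(1-p^{-1})p^{-n_j}$. Wait — I have the two regimes: rechecking against the claimed formula \eqref{equation 1}, which reads $(1-p^{-1})\sum_{j=1}^k p^{-n_j} + \sum_{j>k} p^{-n_j}$, I see the roles are reversed from my description, so I must be careful about which inequality ($\varepsilon < r$ versus $\varepsilon \ge r$) yields which term; the correct reading is that for the large balls ($j \le k$, $p^{-n_j} \ge \varepsilon$) only the sphere is captured, giving $(1-p^{-1})p^{-n_j}$, while for the small balls ($j > k$, $p^{-n_j} < \varepsilon$) the whole ball is within $\varepsilon$ of its sphere, giving $p^{-n_j}$ — this is consistent with Lemma \ref{ball and sphere}, which treats precisely the case $\varepsilon < r$. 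Summing over $j$ then yields \eqref{equation 1}.

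Finally, \eqref{equation 2} follows by pure algebra: write $(1-p^{-1})\sum_{j=1}^k p^{-n_j} + \sum_{j>k} p^{-n_j} = \sum_{j=1}^\infty p^{-n_j} - p^{-1}\sum_{j=1}^k p^{-n_j}$, and recognize $\sum_{j=1}^\infty p^{-n_j} = \zeta_{\mathcal{L}_p}(1)$ from Definition \ref{zetaLp}. The main obstacle is not any deep estimate but rather the bookkeeping: being scrupulous about the boundary case $p^{-n_j} = \varepsilon$ so that the definition of $k(\varepsilon)$ (largest integer with $p^{-n_k} \geq \varepsilon$) matches the partition of the sum, and confirming that the cross-sphere distances genuinely never intrude — both of which rest on the ultrametric ``isosceles triangle'' property already used in Lemma \ref{ball and sphere} and on the maximality of convex components established in \S\ref{pfs}. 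One should also note for completeness that $\zeta_{\mathcal{L}_p}(1) = \mu_H(\Omega) < \infty$, so the right-hand side is finite, consistent with $\mathcal{N}_\varepsilon \subseteq \mathcal{L}_p$.
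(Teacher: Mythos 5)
Your proof is correct and follows essentially the same route as the paper's (sketched) argument: decompose $\mathcal{N}_\varepsilon$ as $\bigcup_{j\le k} S_j \cup \bigcup_{j>k} B_j$ and apply Lemma \ref{ball and sphere}, then rearrange the sum. Your verification that the spheres $S_i$ with $i\neq j$ never affect $d_p(x,\beta\mathcal{L}_p)$ for $x\in B_j$ (via the separation $d_p(a_i,a_j)>\max\{p^{-n_i},p^{-n_j}\}$ of distinct convex components), and your attention to the boundary case $\varepsilon=p^{-n_j}$, supply details that the paper's sketch leaves implicit.
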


 \begin{proof}[Sketch of the proof]
In light of the definition of $\mathcal N_{\varepsilon}=\mathcal N_{\varepsilon}(\mathcal L_p)$ given in Equation (\ref{thick inner tube}) and the definition of $k$ given in the theorem, we have that 
\[
\mathcal N_{\varepsilon}=\bigcup_{j=1}^kS_j\cup \bigcup_{j>k}B_j,
\]
where $B_j:=B(a_j,p^{-n_j})$ and $S_j:=S(a_j,p^{-n_j})$ for each $j\geq 1$.

We then apply Lemma \ref{ball and sphere} to deduce the expression of 
$\mathcal V_{\mathcal L_p}(\varepsilon)=\mu_H(\mathcal N_{\varepsilon})$
stated in Equations (\ref{equation 1}) and (\ref{equation 2}).
\end{proof}

Note that 
$\zeta_{\mathcal L_p}(1)=\sum_{j=1}^{\infty}p^{-n_j}$ 
is the volume of $\mathcal L_p$ (or rather, of the bounded open subset $\Omega$ of $\mathbb Q_p$ representing $\mathcal L_p$):
\[
\zeta_{\mathcal L_p}(1)=\mu_H(\mathcal L_p)<\infty.
\]
It is clearly independent of the choice of $\Omega$ representing $\mathcal L_p$, and so is $\mathcal V_{\mathcal L_p}(\varepsilon)$ in light of either (\ref{equation 1}) or (\ref{equation 2}).

 \begin{corollary}\label{limit}
 The following limit exists in $(0,\infty):$
 \begin{equation}
 \lim_{\varepsilon \to 0^+} \mathcal{V}_{\mathcal L_p}(\varepsilon)
 =\mu_H(\beta\mathcal L_p)
 =(1-p^{-1})\zeta_{\mathcal L_p}(1).
   \end{equation} 
 \end{corollary}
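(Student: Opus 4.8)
The plan is to take the limit $\varepsilon\to 0^+$ directly in the explicit formula provided by Theorem \ref{thick}, using either expression (\ref{equation 1}) or (\ref{equation 2}) for $\mathcal V_{\mathcal L_p}(\varepsilon)$. First I would recall that $k=k(\varepsilon)$, the largest integer with $p^{-n_k}\ge\varepsilon$, satisfies $k(\varepsilon)\to\infty$ as $\varepsilon\to 0^+$. This is because the lengths are arranged in nonincreasing order (\ref{sequence of lengths}) and, since $\mathcal L_p$ is a nontrivial fractal string composed of infinitely many intervals with $\mu_H(\mathcal L_p)=\zeta_{\mathcal L_p}(1)<\infty$, we must have $p^{-n_j}\to 0$ as $j\to\infty$; hence for every fixed $N$ there is $\varepsilon_0>0$ so small that $p^{-n_N}\ge\varepsilon$ for all $\varepsilon<\varepsilon_0$, giving $k(\varepsilon)\ge N$.

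Next I would pass to the limit in expression (\ref{equation 2}), $\mathcal V_{\mathcal L_p}(\varepsilon)=\zeta_{\mathcal L_p}(1)-\frac1p\sum_{j=1}^{k(\varepsilon)}p^{-n_j}$. Since $k(\varepsilon)\to\infty$ and the series $\sum_{j=1}^\infty p^{-n_j}=\zeta_{\mathcal L_p}(1)$ converges (to a finite positive value, as noted after Theorem \ref{thick}), the partial sums converge: $\sum_{j=1}^{k(\varepsilon)}p^{-n_j}\to\zeta_{\mathcal L_p}(1)$. Therefore
\[
\lim_{\varepsilon\to 0^+}\mathcal V_{\mathcal L_p}(\varepsilon)=\zeta_{\mathcal L_p}(1)-\frac1p\,\zeta_{\mathcal L_p}(1)=(1-p^{-1})\,\zeta_{\mathcal L_p}(1),
\]
which lies in $(0,\infty)$ because $0<\zeta_{\mathcal L_p}(1)<\infty$ and $0<1-p^{-1}<1$. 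The same conclusion follows from (\ref{equation 1}): the first sum $(1-p^{-1})\sum_{j=1}^{k}p^{-n_j}\to(1-p^{-1})\zeta_{\mathcal L_p}(1)$ while the tail $\sum_{j>k}p^{-n_j}\to 0$, being the tail of a convergent series.

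Finally, to identify this limit with $\mu_H(\beta\mathcal L_p)$, I would compute the Haar measure of the metric boundary directly from its definition $\beta\mathcal L_p=\bigcup_{j=1}^\infty S(a_j,p^{-n_j})$ as a disjoint union of spheres. By formula (\ref{Svolume}) of Lemma \ref{ball and sphere}, $\mu_H(S(a_j,p^{-n_j}))=(1-p^{-1})p^{-n_j}$, and by countable additivity of $\mu_H$ over the disjoint union, $\mu_H(\beta\mathcal L_p)=(1-p^{-1})\sum_{j=1}^\infty p^{-n_j}=(1-p^{-1})\zeta_{\mathcal L_p}(1)$, matching the limit above. There is no serious obstacle here; the only point requiring a moment's care is the standard fact that $k(\varepsilon)\to\infty$, which hinges on $p^{-n_j}\to 0$, i.e. on the nontriviality of the string — in the degenerate case of finitely many intervals the statement is vacuous or trivial since then $\beta\mathcal L_p$ is a finite union of spheres and $\mathcal N_\varepsilon$ stabilizes to $\beta\mathcal L_p$ for small $\varepsilon$.
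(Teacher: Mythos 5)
Your proof is correct and follows essentially the same route as the paper, which simply lets $\varepsilon\to 0^+$ in either expression of Theorem \ref{thick} and notes that $k(\varepsilon)\to\infty$. Your additional details --- the justification that $k(\varepsilon)\to\infty$ from $p^{-n_j}\to 0$, and the direct computation of $\mu_H(\beta\mathcal L_p)$ via Lemma \ref{ball and sphere} and countable additivity over the disjoint spheres --- are exactly what the paper leaves implicit.
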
 
 
 This follows by letting $\varepsilon \rightarrow 0^+$ in either  (\ref{equation 1}) and (\ref{equation 2}) and noting that $k=k(\varepsilon)\rightarrow \infty.$
 
Corollary \ref{limit}, combined with the fact that $\beta\mathcal L_p \subset \mathcal N_{\varepsilon}(\mathcal L_p)$ for any $\varepsilon >0,$ naturally  leads us to introduce the following definition. 

\begin{definition}\label{thin inner tube}
Given $\varepsilon >0,$ the \emph{thin $p$-adic `inner $\varepsilon$-neighborhood'} (or \emph{ `inner tube'}) of 
$\mathcal L_p$ is given by 
\begin{equation}\label{equation N}
N_{\varepsilon}=N_{\varepsilon}(\mathcal L_p):=\mathcal N_{\varepsilon}(\mathcal L_p)\backslash \beta\mathcal L_p.
\end{equation}

Then, in light of Corollary \ref{limit}, the \emph{volume $V_{\mathcal L_p}(\varepsilon)$ of the thin inner  
$\varepsilon$-neighborhood} of $\mathcal L_p$ is defined to be the Haar measure of $N_{\varepsilon}$ and is given by
\begin{equation}\label{volume of the thin inner tube}
V_{\mathcal L_p}(\varepsilon):=\mu_H(N_{\varepsilon})
=\mathcal V_{\mathcal L_p}(\varepsilon)-\mu_H(\beta \mathcal L_p).
\end{equation}
Note that, by construction, we now have $\lim_{\varepsilon \to 0^+}V_{\mathcal L_p}(\varepsilon)=0.$
\end{definition}

We next state the counterpart (for thin inner tubes) of Theorem \ref{thick}, which is the key result that will enable us to obtain an appropriate $p$-adic analog of the fractal tube formula as well as of the notion of Minkowski dimension and content (see \S\ref{Mdimension}).

\begin{theorem}[Volume of thin inner tubes]\label{thin}
Let $ \mathcal{L}_p=\bigcup_{j=1}^{\infty}  B(a_j, p^{-n_j})$ be a $p$-adic fractal string. Then, for any $\varepsilon >0,$ we have
\begin{eqnarray}\label{volumeequation}
V_{\mathcal L_p}(\varepsilon) &=&p^{-1}\sum_{j>k} p^{-n_j} =p^{-1}\sum_{j:p^{-n_j}<\varepsilon} p^{-n_j}\\
&=& p^{-1}\left(\zeta_{\mathcal L_p}(1)-\sum_{j=1}^k   p^{-n_j}\right),\label{equation2}
    \end{eqnarray} 
  where $k=k(\varepsilon)$ is the largest   integer such that
 $p^{-n_k}\ge \varepsilon$, as before.
\end{theorem}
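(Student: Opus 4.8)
The plan is to reduce Theorem~\ref{thin} directly to Theorem~\ref{thick} and Corollary~\ref{limit}, since the thin inner tube is by definition the thick inner tube with the metric boundary removed. First I would recall from Definition~\ref{thin inner tube} that $N_\varepsilon = \mathcal N_\varepsilon \setminus \beta\mathcal L_p$, and that by Corollary~\ref{limit} we have $\mu_H(\beta\mathcal L_p) = (1-p^{-1})\zeta_{\mathcal L_p}(1) = (1-p^{-1})\sum_{j=1}^\infty p^{-n_j}$. Since $\beta\mathcal L_p \subseteq \mathcal N_\varepsilon(\mathcal L_p)$ for every $\varepsilon>0$ (each sphere $S_j$ lies in its ball $B_j$, and either $S_j$ itself is retained when $j\le k$ or the whole ball $B_j \supseteq S_j$ is retained when $j>k$), we get $\mu_H(N_\varepsilon) = \mu_H(\mathcal N_\varepsilon) - \mu_H(\beta\mathcal L_p) = \mathcal V_{\mathcal L_p}(\varepsilon) - (1-p^{-1})\zeta_{\mathcal L_p}(1)$.

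Next I would substitute the formula from Theorem~\ref{thick}, namely $\mathcal V_{\mathcal L_p}(\varepsilon) = (1-p^{-1})\sum_{j=1}^k p^{-n_j} + \sum_{j>k} p^{-n_j}$, and simplify. Writing $\zeta_{\mathcal L_p}(1) = \sum_{j=1}^k p^{-n_j} + \sum_{j>k} p^{-n_j}$, the subtraction yields
\[
V_{\mathcal L_p}(\varepsilon) = (1-p^{-1})\sum_{j=1}^k p^{-n_j} + \sum_{j>k} p^{-n_j} - (1-p^{-1})\sum_{j=1}^k p^{-n_j} - (1-p^{-1})\sum_{j>k} p^{-n_j},
\]
and the first and third terms cancel, leaving $V_{\mathcal L_p}(\varepsilon) = \sum_{j>k} p^{-n_j} - (1-p^{-1})\sum_{j>k} p^{-n_j} = p^{-1}\sum_{j>k} p^{-n_j}$. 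This is exactly the first displayed equality. The second form, $p^{-1}\sum_{j: p^{-n_j}<\varepsilon} p^{-n_j}$, follows because by the definition of $k=k(\varepsilon)$ as the largest integer with $p^{-n_k}\ge\varepsilon$ and the monotonicity convention $p^{-n_1}\ge p^{-n_2}\ge\cdots$ from~(\ref{sequence of lengths}), the index set $\{j>k\}$ coincides with $\{j: p^{-n_j}<\varepsilon\}$. Finally, the form~(\ref{equation2}) is obtained by rewriting $\sum_{j>k} p^{-n_j} = \zeta_{\mathcal L_p}(1) - \sum_{j=1}^k p^{-n_j}$.

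Alternatively, and perhaps more cleanly, I could derive the formula from scratch without invoking Theorem~\ref{thick}: from the description $\mathcal N_\varepsilon = \bigcup_{j=1}^k S_j \cup \bigcup_{j>k} B_j$ (a disjoint union), removing $\beta\mathcal L_p = \bigcup_{j=1}^\infty S_j$ deletes the pieces $S_j$ for $j\le k$ entirely and removes $S_j$ from $B_j$ for $j>k$. Hence $N_\varepsilon = \bigcup_{j>k}(B_j\setminus S_j)$, a disjoint union, and by Lemma~\ref{ball and sphere} (or rather its proof, which computes $\mu_H(S_j) = (1-p^{-1})p^{-n_j}$) together with $\mu_H(B_j) = p^{-n_j}$, we get $\mu_H(B_j\setminus S_j) = p^{-n_j} - (1-p^{-1})p^{-n_j} = p^{-1}p^{-n_j}$. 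Summing over $j>k$ gives the result. There is essentially no obstacle here: the theorem is a bookkeeping consequence of the already-established Lemma~\ref{ball and sphere}, Theorem~\ref{thick}, and Corollary~\ref{limit}. The only points requiring a word of care are the disjointness of the decomposition of $N_\varepsilon$ (which comes from the disjointness of the convex components $B_j$) and the identification of the index set $\{j>k\}$ with $\{j: p^{-n_j}<\varepsilon\}$, which uses the nonincreasing ordering~(\ref{sequence of lengths}) and the definition of $k(\varepsilon)$.
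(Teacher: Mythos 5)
Your proposal is correct and follows essentially the same route as the paper: since Definition \ref{thin inner tube} sets $V_{\mathcal L_p}(\varepsilon)=\mathcal V_{\mathcal L_p}(\varepsilon)-\mu_H(\beta\mathcal L_p)$, the theorem is exactly the subtraction of Corollary \ref{limit} from Theorem \ref{thick} that you carry out, and your algebra and the identification of $\{j>k\}$ with $\{j:p^{-n_j}<\varepsilon\}$ are both right. Your alternative direct computation of $\mu_H(B_j\setminus S_j)=p^{-1}p^{-n_j}$ is just the same bookkeeping done ball by ball, so nothing further is needed.
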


\begin{remark}
Observe that because the center $a$ of a $p$-adic ball $B=B(a,p^{-n})$ can be chosen arbitrarily without changing its radius $p^{-n},$ the metric boundary of a ball, $\beta B=S=S(a,p^{-n}),$ may depend on the choice of $a$. 
Note, however, that in view of 
Equation (\ref{spherevolume}) in Lemma \ref{ball and sphere}, its volume $\mu_H(S)$ depends only on the radius of $B$.
Similarly, even though the decomposition of a $p$-adic fractal string $\Omega$ (i.e., $\mathcal L_p$) into maximal balls $B_j=B_j(a_j, p^{-n_j})$ is canonical, `the' metric boundary of $\mathcal L_p,$
 $\beta\mathcal{L}_p = \bigcup_{j=1}^{\infty} S(a_j, r_j),$ may in general depend on the choice of the centers $a_j.$ However, according to Corollary \ref{limit}, $\mu_H(\beta\mathcal L_p)$ is independent of this choice and hence, neither 
 $\mathcal V_{\mathcal L_p}(\varepsilon)=\mu_H(\mathcal N_{\varepsilon}(\mathcal L_p))$ nor 
 $V_{\mathcal L_p}(\varepsilon)=\mu_H(N_{\varepsilon}(\mathcal L_p))$ 
 depends on the choice of the centers. 
 Indeed, in light of Theorem \ref{thick} and Theorem \ref{thin}, 
 $\mathcal V_{\mathcal L_p}(\varepsilon)$ and
 $V_{\mathcal L_p}(\varepsilon)$
 depend only on the choice of the $p$-adic lengths $p^{-n_j},$ and hence solely on the $p$-adic fractal string $\mathcal L_p$, viewed as a nonincreasing sequence of positive numbers, and not on the representation $\Omega,$ let alone on the choice of the centers of the balls of which $\Omega$ is composed.
 Although it is not entirely analogous to it, this situation is somewhat  reminiscent of the fact that the volume $V_{\mathcal L}(\varepsilon)$ of the inner $\varepsilon$-neighborhoods of an archimedean fractal string depends only on its lengths $\{l_j\}_{j=1}^{\infty}$ and not on the representative 
 $\Omega$ of $\mathcal L$ as a bounded open set; see Equation (\ref{LPformula}) and the discussion surrounding it in Remark \ref{real vs. p-adic volume}. 
 \end{remark}

 \begin{remark}[Comparison between the archimedean and the nonarchimedean cases]\label{real vs. p-adic volume}
        Recall that $\mathcal V_{\mathcal L_p}(\varepsilon)$ does not tend to zero as 
          $\varepsilon \to 0^+,$ but that instead it tends to the positive number $(1-p^{-1})\zeta_{\mathcal L_p}(1),$ whereas $V_{\mathcal L_p}(\varepsilon)$ does tend to zero. This is the reason why the Minkowski dimension must be defined in terms of $V_{\mathcal L_p}(\varepsilon)$ (as will be done in \S \ref{Mdimension}) rather than in terms 
          of $\mathcal V_{\mathcal L_p}(\varepsilon).$
                    Indeed, if $\mathcal V_{\mathcal L_p}(\varepsilon)$ were used instead, then every $p$-adic fractal string would have Minkowski dimension 1. This would be the case even for a trivial $p$-adic fractal string composed of a single interval, for example. This is also why, in the $p$-adic case, we will focus only on the tube formula for $V_{\mathcal L_p}(\varepsilon)$
                    rather than  for   $\mathcal V_{\mathcal L_p}(\varepsilon)$, although the latter could be obtained by means of the same techniques.   
                                
        Note the difference between the expressions for $V_{\mathcal L}(\varepsilon)$ in the case of an archimedean fractal string $\mathcal L$ and for its nonarchimedean thin (resp., thick) counterpart 
    $V_{\mathcal L_p}(\varepsilon)$ (resp., $\mathcal V_{\mathcal L_p}(\varepsilon)$)
    in the case of a $p$-adic fractal string $\mathcal L_p$. 
    Compare Equation (8.1) of [\ref{L-vF2}] (which was first obtained in [\ref{LapPo}]),
    \begin{equation}\label{LPformula}
    V_{\mathcal L}(\varepsilon)=\sum_{j: l_j\geq 2\varepsilon}2\varepsilon +\sum_{j: l_j< 2\varepsilon}l_j,
    \end{equation}
    with Equations (\ref{volumeequation})--(\ref{equation2})  in Theorem \ref{thin}. (Here, we are using the notation of Remark \ref{afs}, to which the reader is referred to for a brief introduction to real fractal strings.) 
          It follows, in particular, that   $V_{\mathcal L}(\varepsilon)$ is a continuous function of $\varepsilon$ on $(0,\infty)$, whereas  $\mathcal V_{\mathcal L_p}(\varepsilon)$ (and hence also $V_{\mathcal L_p}(\varepsilon)$) is discontinuous (because it is a step function with jump discontinuities at each point $p^{-n_j}, \mbox{for}~ j=1, 2, \ldots $).    
           The above discrepancies between the archimedean and the nonarchimedean cases help explain why the tube formula for real and $p$-adic fractal strings have a similar form, but with different expressions for the corresponding `tubular zeta function' (in the sense of [\ref{LapPe1}--\ref{LPW}]). We note that a minor aspect of these discrepancies is that $2\varepsilon$ is now replaced by $\varepsilon.$ Interestingly, this is due to the fact that the unit interval $[0,1]$ has inradius 1/2 in $\mathbb R=\mathbb {Q}_{\infty}$ whereas 
          $\mathbb Z_p$ has inradius 1 in $\mathbb Q_p.$\footnote{Recall that the \emph{inradius} of a subset $E$ of a metric space is the supremum of the radii of the balls entirely contained in $E$.}
          
          Finally, we note that for an archimedean fractal string $\mathcal L$, there is no reason to distinguish between the `thin volume' $V_{\mathcal L}$ and the `thick volume' 
          $\mathcal V_{\mathcal L}$, as we now explain. Indeed, the archimedean analogue $\beta \mathcal L$ of the metric boundary is a countable set, and hence has measure zero, no matter which realization $\Omega$ one chooses for $\mathcal L$.
          More specifically, in the notation of Remark \ref{afs}, $\beta \mathcal L$ consists of all the endpoints of the open intervals $I_j$ (the connected components of $\Omega$, or equivalently, its convex components). Hence, $\mu_L(\beta \mathcal L)=0$ and so 
          \[V_{\mathcal L}(\varepsilon):=\mathcal V_{\mathcal L}(\varepsilon)-\mu_L(\beta \mathcal L)=
          \mathcal V_{\mathcal L}(\varepsilon),\]
          as claimed. 
         
          For example, if $\mathcal L$ is the ternary Cantor string $\mathcal {CS}$, then 
          $\beta \mathcal L$ is the countable set consisting of all the endpoints of the `deleted intervals'
  in the construction of the real Cantor set $\mathcal {C}$ (see \S\ref{C}); in other words,   $\beta\mathcal L$   is the set $\mathcal T$ of ternary points (which has measure zero because it is countable).
  Hence, the metric boundary $\beta \mathcal L$ of $\mathcal{CS}$  is dense in 
  $\partial \mathcal L$, the topological boundary of $\mathcal {CS}$, and which in the present case, coincides with the ternary Cantor set $\mathcal C$. Also note that the fact that 
  $\mathcal C=\partial \mathcal L$ (and not $\mathcal T=\beta\mathcal L$) has measure zero is purely coincidental and completely irrelevant here. 
          Indeed, the same type of argument would apply if $\mathcal L$ were any archimedean fractal string, even if $\mu_L(\partial \mathcal L)>0$ as is the case for example, if $\partial\mathcal L$ is a `fat Cantor set' (i.e., a Cantor set of positive measure) or, more generally, if $\partial\mathcal L$ is a `fat fractal' (in the sense of [\ref{GMOY}, \ref{Ott}]).
          The underlying reason is that in the archimedean case, the topological boundary $\partial \mathcal L=\partial \Omega$ is disjoint from $\Omega$  (since $\Omega$ is open), and hence, does not play any role in the computation of $V_{\mathcal L}(\varepsilon)$ or of $\mathcal V_{\mathcal L}(\varepsilon)$. 
          By contrast, it is not  true that the metric boundary  $\beta \mathcal L$ and $\Omega$ are disjoint (since, in fact,  $\beta \mathcal L \subset \Omega$), but what is remarkable is that the Minkowski dimension of $\beta \mathcal L$ coincides with that of its closure, and hence (in most cases of interest),
           with $D_{M,\mathcal L}$; see [\ref{Lap1}] and the relevant references therein.
                     \end{remark}   
            
 As a first application of Theorem \ref{thin}, we can obtain, via a direct computation, a tube formula for the $p$-adic Euler string $\mathcal E_p$; that is, an explicit formula for the volume of the thin inner 
 $\varepsilon$-neighborhood, $V_{\mathcal E_p}(\varepsilon)$, as given in Definition \ref{thin inner tube}. 
     Later on, we will similarly obtain a tube formula for the nonarchimedean counterpart of the Cantor string, as discussed in \S\ref{CS3}. 
     We will also show how to recover these results from the general theory developed in the next section. 
     
\begin{example}(Explicit  and exact  tube formula for the $p$-adic Euler string $\mathcal{E}_p$).\label{euler volume}
Let $\mathcal E_p$ be the $p$-adic Euler string defined in \S\ref{Euler string}.  Given  
  $\varepsilon >0$, let $k$ be the largest integer such that $\mu_H(a_k+p^k\mathbb Z_p)=p^{-k}\geq \varepsilon$; then $k=[\log_p \varepsilon^{-1}]. $\footnote{Here, for $x\in \mathbb R$, we write $x=[x]+\{x\},$ where $[x]$ is the integer part  and $\{x\}$ is the fractional part of 
$x$; i.e., $x\in \mathbb Z$ and $0\leq x<1$.}
Thus, by Equation (\ref{volumeequation}) of Theorem \ref{thin}, we have successively: 

\begin{eqnarray}
V_{\mathcal E_p}(\varepsilon)&=&p^{-1}\sum_{n=k+1}^{\infty} p^{-n}\nonumber\\
&=&\frac{p^{-1}}{p-1}p^{-k} \nonumber\\
&=& \frac{p^{-1}}{p-1}p^{-\log_p \varepsilon^{-1} }
  \left(\frac{1}{p}\right)^{-\{\log_p\varepsilon^{-1}\}}\label{V(epsilon)}\nonumber \\
  &=& \frac{p^{-1}}{p-1}  \frac{p-1}{\log p}\sum_{n\in \mathbb Z}\frac{\varepsilon^{1 -in\textbf p}}{1 -in\textbf p}\nonumber  \\
  &=&  \frac{1}{p\log p}\sum_{\omega \in \mathcal D_{\mathcal E_p} } 
     \frac{\varepsilon^{1-\omega}}{ 1-\omega }.\label{VolumeEuler}
    \end{eqnarray}
    We now explain some of the steps above. In the third equality, we have written that 
    $k=\log_p \varepsilon^{-1} -\{\log_p \varepsilon^{-1} \}.$  
     Furthermore, in  the next to last equality, we have appealed to the Fourier series expansion for $b^{-\{x\}}$  given by 
    \begin{equation}\label{fourier}
    b^{-\{x\}}=\frac{b-1}{b}\sum_{n\in \mathbb Z}\frac{e^ {2\pi i n x}}{\log b+2\pi in},
    \end{equation}
    for $b=p^{-1}$ and $x=\log_p\varepsilon ^{-1}$. (See [\ref{L-vF2}, Eq. (1.13)].)
    Finally, in the last equality, we have used Equation (\ref{cdes}) for the set of complex dimensions $\mathcal D_{\mathcal E_p}$ of $\mathcal E_p$. 
        \end{example}
 
 \subsection{Minkowski Dimension}\label{Mdimension}  
 In the sequel, the (inner) Minkowski dimension and the (inner) Minkowski content of a $p$-adic fractal string $\mathcal L_p$ (or, equivalently, of its metric boundary $\beta \mathcal L_p$, see Definition
  \ref{volume definition}) is defined exactly as the corresponding notion for a real fractal string 
  (see [\ref{L-vF2}, Defn. 1.2]), 
 except for the fact that we use the definition of $V(\varepsilon)=V_{\mathcal L_p}(\varepsilon)$ provided in Equation (\ref{volume of the thin inner tube}) of \S\ref{inner tube}.\footnote{Recall that as was explained towards the end of Remark \ref{real vs. p-adic volume}, in the archimedean case, $V(\varepsilon)=V_{\mathcal L}(\varepsilon)$ is the same, whether it is defined by the analog for $\mathcal L$ of (\ref{V_CS}) or by the counterpart of (\ref{volume of the thin inner tube}) in Definition  \ref{thin inner tube}.}
  (For reasons that will be clear to the reader later on in this section, we denote by $D_M=D_{M,\mathcal L_p}$ instead of by $D=D_{\mathcal L_p}$ the Minkowski dimension of $\mathcal L_p.$) More specifically,
 the \emph{Minkowski dimension} of $\mathcal L_p$ is given by 
\begin{equation}\label{dimension}
D_M= D_{M,\mathcal L_p}:= \inf  \left\{\alpha \geq  0~|~ V_{\mathcal L_p}(\varepsilon)=O(\varepsilon ^{1-\alpha}) ~\mbox{as}~
\varepsilon \rightarrow 0^{+}  \right\}.  \end{equation}
Furthermore, $\mathcal L_p$ is said to be \emph{Minkowski measurable}, with \emph{Minkowski content}
$\mathcal M$, if the limit 
\begin{equation}
\mathcal M=\lim_{\varepsilon\rightarrow 0^{+}} V_{\mathcal L_p}(\varepsilon)\varepsilon ^{-(1-D_M)}
\end{equation}
exists in  $(0, \infty).$

\begin{remark}
Note that since 
$V_{\mathcal L_p}(\varepsilon)=\mathcal V_{\mathcal L_p}(\varepsilon)-\mu_H(\beta \mathcal L_p)$,
the above definition of the Minkowski dimension is somewhat analogous to that of ``exterior dimension'', which is sometimes used in the archimedean case to measure the roughness of a `fat fractal' (i.e., a fractal with positive Lebesgue measure). The notion of exterior dimension has been useful in the study of aspects of chaotic nonlinear dynamics; see, e.g., [\ref{GMOY}] and the survey article [\ref{Ott}].

\end{remark}

The following theorem (from [\ref{LapLu3}]) is the exact $p$-adic  analog of [\ref{L-vF2},  Thm. 1.10]
(first observed in [\ref{Lap}], using a result of Besicovitch and Taylor [\ref{BT}]).\footnote{Note that like in [\ref{L-vF2}, Thm. 1.10], we need to assume that $\mathcal L_p$ has infinitely many lengths since in the latter case, we have $\sigma_{\mathcal L_p}=-\infty.$}

\begin{theorem}\label{D=sigma}
Let $\mathcal{L}_p$ be a nontrivial $p$-adic fractal string.\footnote{i.e., $\mathcal L_p$ is not given by a finite union of intervals.} 
  Then the abscissa of convergence $\sigma_{\mathcal L_p}$ of the geometric zeta function $\zeta_{\mathcal{L}_p}$ coincides with the Minkowski dimension $ D_M.$ That is, $ \sigma_{\mathcal L_p}=D_M .$
\end{theorem}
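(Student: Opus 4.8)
The plan is to reduce the statement to an elementary comparison, carried out at the level of the length sequence, between the tail sums controlling $V_{\mathcal{L}_p}(\varepsilon)$ and the Dirichlet series defining $\zeta_{\mathcal{L}_p}$; this is the nonarchimedean analog of the argument of Besicovitch and Taylor [\ref{BT}] underlying [\ref{L-vF2}, Thm.~1.10], and it is in fact simpler here because every length $p^{-n_j}$ is an integer power of $p$. Write $\ell_j := p^{-n_j}$, so that $\ell_1 \ge \ell_2 \ge \cdots > 0$; since $\mathcal{L}_p$ is nontrivial there are infinitely many $\ell_j$, hence $\ell_j \to 0$ and $n_j \to \infty$, so only finitely many $\ell_j$ exceed any fixed threshold. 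At the outset I would record the two elementary bounds $\sigma_{\mathcal{L}_p} \le 1$ and $D_M \le 1$: both follow from $\zeta_{\mathcal{L}_p}(1) = \sum_j \ell_j = \mu_H(\mathcal{L}_p) < \infty$, the latter via Theorem~\ref{thin}, which gives $V_{\mathcal{L}_p}(\varepsilon) \le p^{-1}\zeta_{\mathcal{L}_p}(1)$ and hence $V_{\mathcal{L}_p}(\varepsilon) = O(1) = O(\varepsilon^{1-1})$.

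I would first prove $D_M \le \sigma_{\mathcal{L}_p}$. Fix $\alpha$ with $\sigma_{\mathcal{L}_p} < \alpha \le 1$, so that $\zeta_{\mathcal{L}_p}(\alpha) = \sum_j \ell_j^{\alpha} < \infty$. By Theorem~\ref{thin}, $V_{\mathcal{L}_p}(\varepsilon) = p^{-1}\sum_{j:\,\ell_j < \varepsilon}\ell_j$; writing $\ell_j = \ell_j^{\alpha}\,\ell_j^{1-\alpha}$ and using $\ell_j^{1-\alpha} \le \varepsilon^{1-\alpha}$ (legitimate since $\ell_j < \varepsilon$ and $1-\alpha \ge 0$) yields $V_{\mathcal{L}_p}(\varepsilon) \le p^{-1}\varepsilon^{1-\alpha}\,\zeta_{\mathcal{L}_p}(\alpha)$, that is, $V_{\mathcal{L}_p}(\varepsilon) = O(\varepsilon^{1-\alpha})$; for $\alpha > 1$ the same conclusion is immediate from $V_{\mathcal{L}_p}(\varepsilon) = O(1)$. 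Thus every $\alpha > \sigma_{\mathcal{L}_p}$ belongs to the set defining $D_M$ in (\ref{dimension}), and letting $\alpha \downarrow \sigma_{\mathcal{L}_p}$ gives $D_M \le \sigma_{\mathcal{L}_p}$.

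The reverse inequality $\sigma_{\mathcal{L}_p} \le D_M$ is the heart of the matter and, I expect, the main obstacle. If $D_M \ge 1$ there is nothing to prove, since $\sigma_{\mathcal{L}_p} \le 1$; so assume $D_M < 1$ and fix $\alpha, \beta$ with $D_M < \beta < \alpha < 1$. By the definition of $D_M$ in (\ref{dimension}), $V_{\mathcal{L}_p}(\varepsilon) = O(\varepsilon^{1-\beta})$ as $\varepsilon \to 0^+$; specializing to $\varepsilon = p^{-n}$ and setting $T_n := p\,V_{\mathcal{L}_p}(p^{-n})$, we obtain $T_n = O(p^{-n(1-\beta)})$ as $n \to \infty$. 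Let $m_n := \#\{j : n_j = n\}$ be the multiplicity of the length $p^{-n}$. By Theorem~\ref{thin}, $T_n = \sum_{j:\,n_j \ge n+1}\ell_j$, so the telescoping identity $T_{n-1} - T_n = m_n\,p^{-n}$ together with $T_{n-1} = O(p^{-(n-1)(1-\beta)})$ yields $m_n = O(p^{\,n\beta})$ (the exponent comes from the one-line estimate $n - (n-1)(1-\beta) = 1 + (n-1)\beta \le 1 + n\beta$). Hence, discarding the finitely many indices $j$ with $n_j$ below some $n_0$,
\[
\zeta_{\mathcal{L}_p}(\alpha) \;=\; \sum_{j}\ell_j^{\alpha} \;=\; O(1) + \sum_{n \ge n_0} m_n\,p^{-n\alpha} \;=\; O(1) + O\!\left(\sum_{n \ge n_0} p^{-n(\alpha-\beta)}\right) \;<\; \infty,
\]
because $\alpha - \beta > 0$. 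Therefore $\alpha \ge \sigma_{\mathcal{L}_p}$ for every $\alpha \in (D_M, 1)$; letting $\alpha \downarrow D_M$ gives $\sigma_{\mathcal{L}_p} \le D_M$, and combining this with the previous paragraph yields $\sigma_{\mathcal{L}_p} = D_M$, as claimed.

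The only routine points left for a full write-up are the bookkeeping of the multiplicative constants depending on $p$, $\alpha$, $\beta$, and the trivial verification that $V_{\mathcal{L}_p}$ (being, by Theorem~\ref{thin}, a step function with jumps at the $\ell_j$) is constant on each interval $(p^{-n-1}, p^{-n}]$, so that its big-$O$ behavior as $\varepsilon \to 0^+$ and along the scale $\varepsilon = p^{-n}$ agree — though only the easy implication of this is used above. I would emphasize, in line with the spirit of this survey, that unlike the archimedean Besicovitch--Taylor argument no dyadic blocking and no Tauberian input is needed: the discreteness of the value group $|\mathbb{Q}_p^{\times}|_p = p^{\mathbb{Z}}$ performs that role automatically, which is precisely why the passage from $\zeta_{\mathcal{L}_p}$ to $D_M$ is more transparent in the $p$-adic setting.
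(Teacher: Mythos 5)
Your proof is correct, and for one of the two inequalities it takes a genuinely different route from the paper's. The survey defers the details to [\ref{LapLu3}] but describes the strategy in the remark preceding Lemma \ref{L:zeta=N,V}: the inequality $\sigma_{\mathcal L_p}\le D_M$ is obtained by adapting the archimedean Besicovitch--Taylor argument of [\ref{L-vF2}, Thm.~1.10] --- in substance your telescoping multiplicity count $T_{n-1}-T_n=m_np^{-n}$, $m_n=O(p^{n\beta})$ --- whereas the reverse inequality $\sigma_{\mathcal L_p}\ge D_M$ is said \emph{not} to follow by direct adaptation and is instead derived from the integral representation of Lemma \ref{L:zeta=N,V}, namely $\zeta_{\mathcal L_p}(s)=\zeta_{\mathcal L_p}(1)l_1^{s-1}+p(1-s)\int_0^{l_1}V_{\mathcal L_p}(\varepsilon)\varepsilon^{s-2}\,d\varepsilon$. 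You obtain that second inequality instead from the one-line tail estimate $V_{\mathcal L_p}(\varepsilon)=p^{-1}\sum_{j:\,p^{-n_j}<\varepsilon}p^{-n_j}\le p^{-1}\varepsilon^{1-\alpha}\zeta_{\mathcal L_p}(\alpha)$ for $\sigma_{\mathcal L_p}<\alpha\le 1$, which is legitimate precisely because, by Theorem \ref{thin}, the nonarchimedean volume consists only of the tail sum and has no counting term $2\varepsilon\,\#\{j: l_j\ge 2\varepsilon\}$ as in the archimedean formula (\ref{LPformula}); Lemma \ref{L:zeta=N,V} is thus bypassed entirely. What the paper's route buys is the integral representation itself, an identity relating $\zeta_{\mathcal L_p}$ to $V_{\mathcal L_p}$ that is of independent use; what yours buys is a shorter, completely elementary and self-contained proof of $\sigma_{\mathcal L_p}=D_M$. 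One harmless discrepancy worth noting when you compare with [\ref{LapLu3}]: you call $\sigma_{\mathcal L_p}\le D_M$ ``the heart of the matter,'' while the paper regards the opposite inequality as the delicate one.
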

 
 \begin{remark}\label{0D1}
 For any $p$-adic fractal  string, we have $0\leq D_M\leq 1.$ Indeed, by definition, $D_M\geq 0$; furthermore, in light of (\ref{dimension}), $D_M\leq 1$ since a $p$-adic fractal string is a bounded open set, and hence, has finite volume.
 \end{remark}
 
 The next corollary follows by combining Theorem \ref{D=sigma} and Remark \ref{0D1}.
 \begin{corollary}\label{corollary sigma}
 Let $\mathcal L_p$ be a nontrivial $p$-adic fractal string. Then $0\leq \sigma\leq 1$.
 \end{corollary}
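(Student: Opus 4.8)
The plan is to derive the corollary directly from the two results it is said to combine, namely Theorem~\ref{D=sigma} and Remark~\ref{0D1}. First I would invoke Theorem~\ref{D=sigma}: since $\mathcal{L}_p$ is nontrivial (i.e., composed of infinitely many intervals), its abscissa of convergence $\sigma = \sigma_{\mathcal{L}_p}$ equals the Minkowski dimension $D_M = D_{M,\mathcal{L}_p}$. Then I would invoke Remark~\ref{0D1}, which records that $0 \leq D_M \leq 1$ for any $p$-adic fractal string. Chaining these two facts yields $0 \leq \sigma \leq 1$, as desired.

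For completeness, I would also indicate a self-contained argument for the two inequalities that bypasses the Minkowski dimension entirely. The lower bound $\sigma \geq 0$ holds because, $\mathcal{L}_p$ being nontrivial, the defining Dirichlet series has infinitely many terms $p^{-n_j\alpha}$; at $\alpha = 0$ each term equals $1$, so $\sum_{j=1}^{\infty} p^{-n_j \cdot 0} = +\infty$, whence by the definition~(\ref{sigma}) of $\sigma_{\mathcal{L}_p}$ we must have $\sigma \geq 0$. The upper bound $\sigma \leq 1$ holds because $\mathcal{L}_p$ is a bounded open subset $\Omega$ of $\mathbb{Q}_p$, so $\sum_{j=1}^{\infty} p^{-n_j} = \mu_H(\Omega) < \infty$; thus $\alpha = 1$ belongs to the set over which the infimum in~(\ref{sigma}) is taken, giving $\sigma \leq 1$.

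There is essentially no obstacle here, since both arguments are immediate. The only point meriting care is the nontriviality hypothesis: it is needed for the lower bound (without it $\zeta_{\mathcal{L}_p}$ could be entire, with $\sigma = -\infty$, as noted in the remark following Definition~\ref{dvcd}), and it is equally the hypothesis under which Theorem~\ref{D=sigma} applies. I would therefore state the proof in the two-line form ``by Theorem~\ref{D=sigma}, $\sigma = D_M$; by Remark~\ref{0D1}, $0 \leq D_M \leq 1$; hence $0 \leq \sigma \leq 1$'', optionally appending the direct verification above as a parenthetical remark.
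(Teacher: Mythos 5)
Your proposal is correct and follows exactly the paper's route: the text introduces the corollary with ``The next corollary follows by combining Theorem \ref{D=sigma} and Remark \ref{0D1},'' which is precisely your two-line chain $\sigma = D_M$ and $0 \leq D_M \leq 1$. The supplementary direct verification (divergence of the Dirichlet series at $\alpha=0$ for a nontrivial string, convergence at $\alpha=1$ by finiteness of $\mu_H(\Omega)$) is sound and a harmless bonus, though not part of the paper's argument.
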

 
\begin{remark}
The proof of $\sigma_{\mathcal L_p}\leq D_M$ in Theorem \ref{D=sigma} is obtained by adapting the first part of the proof of  [\ref{L-vF2}, Thm. 1.10]. In light of the new form of $V_{\mathcal L_p}(\varepsilon)$ given by Theorem \ref{thin}, however, it does not seem possible to prove the reverse inequality 
$\sigma_{\mathcal L_p}\geq D_M$ by simply adapting the second part of the proof of [\ref{L-vF2}, Thm. 1.10]. 
Nevertheless, this latter inequality follows from the following new integral representation of  the geometric zeta function
$\zeta_{\mathcal L_p}$ in Lemma \ref{L:zeta=N,V}, a specification to   $p$-adic fractal strings of a general lemma in [\ref{LapLu3}].
\end{remark}

\begin{lemma}\label{L:zeta=N,V}
Let $\mathcal L_p$ be a $p$-adic fractal string, then 
\begin{equation}\label{E:zeta=V}
\zeta_{\mathcal L_p}(s)=\zeta_{\mathcal L_p}(1)l_1^{s-1}+p(1-s)\int_0^{l_1}V_{\mathcal L_p}(\varepsilon)\varepsilon^{s-2}\,d\varepsilon,
\end{equation}
where $l_1=p^{-n_1}$ as in (\ref{sequence of lengths}). Furthermore, the integral converges exactly when $\zeta_{\mathcal L_p}(s)=\sum_{j=1}^{\infty}p^{-n_js}$ converges. 
\end{lemma}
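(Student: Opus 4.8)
The plan is to establish the identity \eqref{E:zeta=V} by recognizing the right-hand side as a telescoping sum once the explicit step-function form of $V_{\mathcal L_p}(\varepsilon)$ from Theorem \ref{thin} is substituted into the integral. First I would recall from Theorem \ref{thin} that on the interval $(0,l_1)$ the volume $V_{\mathcal L_p}(\varepsilon)$ is a left-continuous step function: writing $l_j=p^{-n_j}$, it is constant on each half-open interval $(l_{k+1},l_k]$ (with the convention $l_{k+1}<l_k$ when the lengths are strictly decreasing; in general one groups equal lengths), taking there the value $p^{-1}\sum_{j>k}p^{-n_j} = p^{-1}\sum_{j:\, l_j<\varepsilon} l_j$. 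The jumps occur precisely at the points $\varepsilon = l_j$. So the integral $\int_0^{l_1} V_{\mathcal L_p}(\varepsilon)\varepsilon^{s-2}\,d\varepsilon$ decomposes as a sum $\sum_{k\geq 1} \bigl(p^{-1}\sum_{j>k} l_j\bigr)\int_{l_{k+1}}^{l_k}\varepsilon^{s-2}\,d\varepsilon$, and I would evaluate $\int_{l_{k+1}}^{l_k}\varepsilon^{s-2}\,d\varepsilon = \frac{1}{s-1}\bigl(l_k^{s-1}-l_{k+1}^{s-1}\bigr)$ for $s\neq 1$.

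Next I would carry out the Abel summation (summation by parts). Substituting the evaluated integral, the sum becomes $\frac{p^{-1}}{s-1}\sum_{k\geq1}\bigl(\sum_{j>k}l_j\bigr)\bigl(l_k^{s-1}-l_{k+1}^{s-1}\bigr)$. Setting $R_k:=\sum_{j>k}l_j$ (so $R_k - R_{k+1}=l_{k+1}$ and $R_0=\zeta_{\mathcal L_p}(1)=\sum_{j\geq1}l_j$), rearranging the double sum by interchanging the order of summation gives $\sum_{k\geq1}R_k\bigl(l_k^{s-1}-l_{k+1}^{s-1}\bigr) = \sum_{k\geq2}(R_{k-1}-R_k)\,l_k^{s-1} - R_1 l_1^{s-1} \cdot(\text{boundary adjustment})$; collecting terms carefully this telescopes to $\sum_{j\geq2}l_j\cdot l_j^{s-1} - R_1 l_1^{s-1} = \sum_{j\geq2}l_j^{s} - R_1 l_1^{s-1}$. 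Since $R_1 = \zeta_{\mathcal L_p}(1) - l_1$, this equals $\sum_{j\geq2}l_j^s - \zeta_{\mathcal L_p}(1)l_1^{s-1} + l_1\cdot l_1^{s-1} = \zeta_{\mathcal L_p}(s) - \zeta_{\mathcal L_p}(1)l_1^{s-1}$. Multiplying back by the prefactor $\frac{p^{-1}}{s-1}$ from the integral and then by $p(1-s)$ from \eqref{E:zeta=V} converts $\frac{p^{-1}}{s-1}$ into $-p^{-1}\cdot\frac{p(1-s)}{1-s}\cdot\frac{1}{1} $; tracking the sign, $p(1-s)\cdot\frac{p^{-1}}{s-1} = -1$, so we recover exactly $\zeta_{\mathcal L_p}(1)l_1^{s-1} + \bigl(\zeta_{\mathcal L_p}(s)-\zeta_{\mathcal L_p}(1)l_1^{s-1}\bigr) = \zeta_{\mathcal L_p}(s)$, as claimed.

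For the convergence statement, I would observe that all manipulations above are valid in the half-plane $\Re(s)>\sigma_{\mathcal L_p}$, where the Dirichlet series $\sum_j l_j^s$ converges absolutely and $R_k\to 0$; the tail terms and the rearrangement are then justified by absolute convergence, and the boundary term $R_k l_{k+1}^{s-1}\to 0$ as $k\to\infty$ because $R_k\to 0$ while $l_{k+1}^{\Re(s)-1}$ grows at most like $l_{k+1}^{-1}$ and $R_k = \sum_{j>k}l_j$ dominates $l_{k+1}$ geometrically is not automatic — so more carefully, one uses $R_k l_{k+1}^{s-1} \to 0$ which follows since $V_{\mathcal L_p}(l_{k+1}^+)\,l_{k+1}^{s-1} = p^{-1}R_{k+1} l_{k+1}^{s-1}$ and the integral $\int_0^{l_1}V_{\mathcal L_p}(\varepsilon)\varepsilon^{s-2}d\varepsilon$ converges iff $V_{\mathcal L_p}(\varepsilon)=O(\varepsilon^{1-\Re(s)+\delta})$ near $0$, which by Theorem \ref{D=sigma} (identifying the decay rate of $V$ with the Minkowski dimension) holds exactly when $\Re(s)>D_M=\sigma_{\mathcal L_p}$. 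Conversely, if the integral converges, the identity \eqref{E:zeta=V} exhibits $\zeta_{\mathcal L_p}(s)$ as finite, and since the Dirichlet series has abscissa of convergence $\sigma_{\mathcal L_p}$, this forces $\Re(s)>\sigma_{\mathcal L_p}$ (or $s$ on the boundary with conditional convergence, which one handles by the standard Dirichlet-series argument).

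The main obstacle I anticipate is \textbf{bookkeeping in the summation by parts}: getting the index shifts and the treatment of the $k=1$ boundary term exactly right, and making sure the interchange of the double sum is legitimate — this requires genuine absolute convergence, which is why the equivalence with convergence of the Dirichlet series is not a mere afterthought but is woven into the argument. A secondary subtlety is the handling of repeated lengths (when several $l_j$ coincide, the step function has fewer jump points but larger jumps); I would sidestep this by working with the \emph{distinct} lengths $\ell_1>\ell_2>\cdots$ carrying multiplicities $m_i$, or simply note that grouping equal terms does not affect any of the sums. Everything else — evaluating the elementary integral $\int \varepsilon^{s-2}d\varepsilon$ and the final algebraic simplification using $R_1 = \zeta_{\mathcal L_p}(1)-l_1$ — is routine.
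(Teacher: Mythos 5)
The paper defers the proof of this lemma to [\ref{LapLu3}] (a work in preparation), so there is no in-paper argument to compare against; judged on its own terms, your main computation is the right one and does establish the identity (\ref{E:zeta=V}). Two remarks on the identity part. The Abel-summation bookkeeping contains exactly the sign slip you feared: the telescoped sum $\sum_{k\geq1}R_k\bigl(l_k^{s-1}-l_{k+1}^{s-1}\bigr)$ equals $R_1l_1^{s-1}-\sum_{k\geq2}l_k^{s}=\zeta_{\mathcal L_p}(1)l_1^{s-1}-\zeta_{\mathcal L_p}(s)$, i.e.\ the \emph{negative} of what you wrote mid-argument; your final line happens to be consistent with the correct value, so this is a transcription error rather than a mathematical one, but it must be fixed. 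You can also bypass the partition into steps, the boundary term $R_Kl_{K+1}^{s-1}$, and the tie-grouping issue entirely: since $V_{\mathcal L_p}(\varepsilon)=p^{-1}\sum_{j\geq2}l_j\mathbf{1}_{\{l_j<\varepsilon\}}$ on $(0,l_1)$ by Theorem \ref{thin}, Tonelli's theorem gives $\int_0^{l_1}V_{\mathcal L_p}(\varepsilon)\varepsilon^{s-2}\,d\varepsilon=p^{-1}\sum_{j\geq2}l_j\int_{l_j}^{l_1}\varepsilon^{s-2}\,d\varepsilon=\frac{p^{-1}}{s-1}\bigl(l_1^{s-1}(\zeta_{\mathcal L_p}(1)-l_1)-(\zeta_{\mathcal L_p}(s)-l_1^{s})\bigr)$, from which (\ref{E:zeta=V}) is immediate.

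The genuine gap is in the convergence clause. You justify it by appealing to Theorem \ref{D=sigma}, but the paper states explicitly that the inequality $\sigma_{\mathcal L_p}\geq D_M$ in Theorem \ref{D=sigma} is \emph{derived from} this lemma, so that appeal is circular; moreover, the asserted equivalence ``the integral converges iff $V_{\mathcal L_p}(\varepsilon)=O(\varepsilon^{1-\Re(s)+\delta})$'' is false in both directions. The repair is already latent in your computation and needs no dimension theory: both the integral and the Dirichlet series converge (absolutely) if and only if they do so for $s$ replaced by $\Re(s)$, since $V_{\mathcal L_p}\geq0$ and the $l_j^{s}$ have positive moduli $l_j^{\Re(s)}$; and for real $s<1$ the $j$-th term above is $\frac{l_j^{s}-l_jl_1^{s-1}}{1-s}=\frac{l_j^{s}}{1-s}\bigl(1-(l_j/l_1)^{1-s}\bigr)$, which is nonnegative, bounded above by $\frac{l_j^{s}}{1-s}$, and bounded below by $\frac{1}{2}\cdot\frac{l_j^{s}}{1-s}$ for all large $j$ (because $l_j\to0$). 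Comparison then shows the integral converges exactly when $\sum_j l_j^{s}$ does, which is the second assertion of the lemma. I would restructure the write-up so that this term-by-term estimate comes first and carries both the convergence claim and the justification for interchanging sum and integral.
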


\subsection{Languid and Strongly Languid $p$-Adic Fractal Strings}
In \S\ref{explicit tf}, we will obtain  explicit tube formulas for $p$-adic fractal strings, with and without error term. (See Theorem \ref{dtf} and Corollary \ref{ftf}.) We will then apply the  tube formula without error term (the strongly languid case of Theorem \ref{dtf}) to $p$-adic self-similar strings in \S\ref{exact tf}, and will apply its corollary to the $p$-adic Euler string discussed in \S\ref{Euler string} and revisited in Example \ref{eulerrevisit} (at the end of \S\ref{explicit tf}). 

In order to state the explicit formulas with (or without) error term, we need to assume the following technical hypotheses (see [\ref{L-vF2}, Defns. 5.2 and 5.3] and recall the definition of the screen $S$ given in \S\ref{pfs}, just before Definition \ref{dvcd}).

\begin{definition}\label{languid}
A $p$-adic  fractal string $\mathcal{L}_p$ is said to be \emph{languid} if its geometric zeta function 
$\zeta_{\mathcal{L}_p}$ satisfies the following growth conditions: There exist real constants $\kappa$ and $C>0$ and a two-sided sequence $\{T_n\}_{n\in \mathbb Z}$ of real numbers such that $T_{-n}<0<T_n$ for $n\geq1$, and 
\[
\lim_{n \rightarrow \infty} T_n=\infty, \quad  \lim_{n \rightarrow \infty} T_{-n}=-\infty, \quad
 \lim_{n \rightarrow \infty} \frac{T_n}{|T_{-n}|}=1,
 \]
 such that
 \begin{itemize}
 \item $\mathbf{L1}$ For all $n\in \mathbb Z$ and all $u \geq S(T_n),$
\[| \zeta_{\mathcal{L}_p}(u +i T_n)|\leq C (|T_n|+1)^{\kappa},\]
  \item $\mathbf{L2}$
 For all $t\in \mathbb R, |t|\geq 1,$
\[ | \zeta_{\mathcal{L}_p}(S(t)+it)|\leq C |t|^{\kappa}.\]
 \end{itemize}

We say that $\mathcal{L}_p$ is \emph{strongly languid} if its geometric zeta function $\zeta_{\mathcal{L}_p}$ satisfies the following conditions, in addition to $\mathbf{L1}$ with $S(t)\equiv -\infty:$
 There exists a sequence of screens $S_m: t\mapsto S_m(t)$ for $m\geq1, t\in \mathbb R$, with 
 $\sup S_m \rightarrow -\infty$ as $m\rightarrow \infty$ and with a uniform Lipschitz bound $\sup_{m\geq1} ||S_m||_{Lip}< \infty$, such that
 \begin{itemize}
 \item $\mathbf{L2'}$ There exist constants $A,C>0$ such that for all $t\in \mathbb R$ and $m\geq1$, 
 \[ |\zeta_{\mathcal{L}_p}(S_m(t)+it)|\leq C A^{|S_m(t)|}(|t| + 1)^{\kappa}.\]
 \end{itemize}
 \end{definition}
 
\begin{remark} (a) Intuitively, hypothesis $\mathbf{L1}$ is a polynomial growth condition along horizontal lines (necessarily avoiding the poles of $\zeta_{\mathcal{L}_p}$), while hypothesis $\mathbf{L2}$ is a polynomial growth condition along the vertical direction of the screen.

(b) Clearly, condition $\mathbf{L2'}$ is stronger than $\mathbf{L2}$. Therefore, if $\mathcal{L}_p$ is strongly languid then it is also languid (for each screen $S_m$ separately). 

(c) Moreover, if $\mathcal{L}_p$ is languid for some $\kappa$, then it is also languid for every larger value of $\kappa$.  The same is also true for strongly languid strings. 

(d) Finally, hypotheses $\mathbf{L1}$ and $\mathbf{L2}$ require that  $\zeta_{\mathcal{L}_p}$ has an analytic (i.e., meromorphic) continuation to an open, connected neighborhood of $\Re(s)\geq \sigma_{\mathcal L_p}$, while $\mathbf{L2'}$ requires that $\zeta_{\mathcal L_p}$ has a meromorphic continuation to all of $\mathbb C$. 
\end{remark}

 \subsection{Explicit Tube Formulas for $p$-Adic Fractal Strings} \label{explicit tf} 
 
 The following result is the counterpart in this context of Theorem 8.1 of [\ref{L-vF2}], the distributional tube formula for real fractal strings.  It is established in [\ref{LapLu3}] by using, in particular, the extended distributional explicit formula of [\ref{L-vF2}, Thms. 5.26 and 5.27], along with the expression for the volume of thin inner $\varepsilon$-tubes obtained in Theorem \ref{thin}.

\begin{theorem}[$p$-Adic explicit  tube formula]\label{dtf}
(i) Let $\mathcal L_p$ be a languid $p$-adic fractal string  for some real exponent $\kappa$ and a screen $S$ that lies strictly to the left of the vertical line $\Re(s)=1$.  
Further assume that $\sigma_{\mathcal L_p}<1.$\footnote{Recall from Corollary \ref{corollary sigma} that we always have $\sigma_{\mathcal L_p}\leq 1$. Moreover, we will see in Remark \ref{D<1} that if $\mathcal L_p$ is self-similar, then $\sigma_{\mathcal L_p}< 1$.}
Then the volume of the thin inner  $\varepsilon$-neighborhood of $\mathcal L_p$ is given by the following distributional explicit formula, on test functions in $\mathbf{D}(0, \infty):$\footnote{Here, $\mathbf{D}(0, \infty)$ is the space of $C^{\infty}$ functions with compact support in $(0,\infty)$.}
\begin{equation}\label{detf}
V_{\mathcal L_p}(\varepsilon)= \sum_{ \omega \in \mathcal{D}_{\mathcal{L}_p} (W)} res 
\left (
\frac{p^{-1} \zeta_{\mathcal{L}_p} (s) \varepsilon^{1-s}} {1-s}; \omega
\right) 
+ \mathcal{R}_p (\varepsilon),
\end{equation}
where $\mathcal D_{\mathcal L_p}(W)$ is the set of visible complex dimensions of $\mathcal L_p$ (as given in Definition \ref{dvcd}).
Here, the distributional error term is given by 
\begin{equation}\label{error term}
\mathcal R_p(\varepsilon)=\frac{1}{2\pi i}
\int_{S}\frac{p^{-1} \zeta_{\mathcal{L}_p} (s) \varepsilon^{1-s}} {1-s}ds
\end{equation}
and is estimated distributionally \footnote{As in [\ref{L-vF2}, Defn. 5.29].} by
\begin{equation}\label{estimate}
\mathcal{R}_p(\varepsilon)=O(\varepsilon^{1-\sup S}), \hspace{1cm} \mbox{as} ~\varepsilon \rightarrow 0^+.
\end{equation}

(ii) Moreover, if $\mathcal L_p$ is strongly languid (as in the second part of Definition \ref{languid}), then we can take $W=\mathbb C$ and
$\mathcal{R}_p(\varepsilon)\equiv 0$, provided we apply this formula to test functions supported on   compact subsets  of $[0, A)$. The resulting explicit formula without error term is often called an \emph{exact tube formula} in this case. 
\end{theorem}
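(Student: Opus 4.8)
The plan is to derive the tube formula \eqref{detf} by feeding the explicit expression for $V_{\mathcal L_p}(\varepsilon)$ from Theorem~\ref{thin} into the extended distributional explicit formula of \cite{L-vF2} (Thms.\ 5.26--5.27). First I would rewrite the formula $V_{\mathcal L_p}(\varepsilon)=p^{-1}\sum_{j:\,p^{-n_j}<\varepsilon}p^{-n_j}$ from Theorem~\ref{thin} as the value at $x=\varepsilon^{-1}$ of a pointwise sum over the lengths, so that it becomes a counting-type sum to which the explicit formula applies. The key observation is the integral representation in Lemma~\ref{L:zeta=N,V}: it shows that the Mellin transform of $V_{\mathcal L_p}(\varepsilon)$ (suitably interpreted, with the boundary term $\zeta_{\mathcal L_p}(1)l_1^{s-1}$ peeled off) is, up to the factor $p^{-1}/(1-s)$, essentially $\zeta_{\mathcal L_p}(s)$. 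Concretely, I would apply the distributional Mellin--Perron-type inversion to $p^{-1}\sum_j p^{-n_j}\mathbf 1_{\{p^{-n_j}<\varepsilon\}}$; the generating Dirichlet series is $\sum_j p^{-n_j}\cdot p^{-n_j(s-1)}=\zeta_{\mathcal L_p}(s)$ evaluated along a shifted variable, and the Perron kernel contributes the factor $\varepsilon^{1-s}/(1-s)$.

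The main steps, in order: (1) Set up the distributional identity $V_{\mathcal L_p}(\varepsilon)=\frac{1}{2\pi i}\int_{c-i\infty}^{c+i\infty}\frac{p^{-1}\zeta_{\mathcal L_p}(s)\varepsilon^{1-s}}{1-s}\,ds$ for $c>\sigma_{\mathcal L_p}$, valid in the sense of distributions on $\mathbf D(0,\infty)$ — this is where Lemma~\ref{L:zeta=N,V} and the languidity hypothesis $\mathbf{L1}$--$\mathbf{L2}$ are used to justify absolute convergence and the interchange of sum and integral when paired with a test function. (2) Shift the contour from the line $\Re(s)=c$ to the screen $S$, picking up residues at the visible complex dimensions $\omega\in\mathcal D_{\mathcal L_p}(W)$ of the integrand $\frac{p^{-1}\zeta_{\mathcal L_p}(s)\varepsilon^{1-s}}{1-s}$; the hypothesis that $S$ lies strictly left of $\Re(s)=1$ ensures the pole at $s=1$ is not crossed, and $\sigma_{\mathcal L_p}<1$ ensures all of $\mathcal D_{\mathcal L_p}$ lies left of $s=1$. (3) The contour-shift is licensed by the growth bounds $\mathbf{L1}$ (to kill the horizontal segments at heights $T_n$) and $\mathbf{L2}$ (to control the integral along $S$), exactly as in the proof of \cite{L-vF2}, Thm.\ 5.26. (4) Identify the leftover integral along $S$ with the error term $\mathcal R_p(\varepsilon)$ of \eqref{error term}, and estimate it distributionally via $|\varepsilon^{1-s}|=\varepsilon^{1-\Re(s)}\le\varepsilon^{1-\sup S}$ together with the $\mathbf{L2}$ bound on $\zeta_{\mathcal L_p}$ along the screen, giving \eqref{estimate}.

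For part (ii), when $\mathcal L_p$ is strongly languid I would run the same argument with the sequence of screens $S_m$ whose suprema tend to $-\infty$. The point is that $\mathbf{L2'}$ provides the bound $|\zeta_{\mathcal L_p}(S_m(t)+it)|\le CA^{|S_m(t)|}(|t|+1)^\kappa$, so that when the integrand is paired with a test function supported in a compact subset of $[0,A)$ — equivalently $\varepsilon$ bounded below by some $\varepsilon_0$, hence $\varepsilon^{-1}<A$, wait, more precisely the support condition forces $\varepsilon^{1-\Re(s)}$ to be dominated by powers of $A$ in the right direction — the factor $\varepsilon^{1-S_m(t)}A^{S_m(t)}$ tends to $0$ as $\sup S_m\to-\infty$, killing $\mathcal R_p$ in the limit. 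Letting $m\to\infty$ then sweeps up all of $\mathbb C$, so $W=\mathbb C$ and $\mathcal R_p(\varepsilon)\equiv 0$; the surviving sum over $\mathcal D_{\mathcal L_p}$ is now the full set of complex dimensions.

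I expect the main obstacle to be the rigorous justification of the distributional manipulations in step (1)--(2): namely, that the Mellin--Perron representation of the \emph{step function} $V_{\mathcal L_p}(\varepsilon)$ holds in the distributional (not pointwise) sense, and that the contour shift can be performed inside the pairing with an arbitrary test function in $\mathbf D(0,\infty)$. This is precisely the content of the extended distributional explicit formula of \cite{L-vF2} (Thms.\ 5.26--5.27 and Defn.\ 5.29 for the distributional $O$-estimate), so the real work is in verifying that $V_{\mathcal L_p}$ — through Lemma~\ref{L:zeta=N,V} — fits the hypotheses of that machinery, i.e.\ that it is a distribution of the admissible type with the stated screen, window, and languidity. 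The $p$-adic nature enters only through the specific form of $V_{\mathcal L_p}(\varepsilon)$ from Theorem~\ref{thin} (the factor $p^{-1}$ and the absence of the $2\varepsilon$ truncation present in the archimedean case); once that input is in place, the analytic argument is formally parallel to the real case treated in \cite{L-vF2}.
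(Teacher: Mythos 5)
Your proposal follows essentially the same route the paper takes: it establishes the formula by feeding the expression for $V_{\mathcal L_p}(\varepsilon)$ from Theorem \ref{thin} (via the Mellin-type representation of Lemma \ref{L:zeta=N,V}) into the extended distributional explicit formula of [\ref{L-vF2}, Thms.\ 5.26--5.27], with the languidity hypotheses licensing the contour shift to the screen and the strongly languid bound killing the error term in the limit over the screens $S_m$. This is exactly the strategy the paper attributes to the proof in [\ref{LapLu3}], so the proposal is correct in approach; only the bookkeeping of the support condition in part (ii) (where the relevant requirement is that $(A\varepsilon)^{-S_m(t)}\to 0$) needs to be stated more carefully.
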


\begin{remark}\label{tzf}
We may rewrite the (typically infinite) sum in (\ref{detf}) as follows:
\begin{equation}\label{sum residue}
\sum_{ \omega \in \mathcal{D}_{\mathcal{L}_p} (W)} res(
\zeta_{\mathcal L_p}(\varepsilon; s); s=\omega), 
\end{equation}
where (by analogy with the definitions and results in [\ref{LapPe1}--\ref{LPW}]),
\begin{equation}\label{tubular zeta}
\zeta_{\mathcal L_p}(\varepsilon; s):=\frac{p^{-1}\zeta_{\mathcal L_p}(s)\varepsilon^{1-s}}{1-s}
\end{equation}
is called the \emph{nonarchimedean tubular zeta function} of the $p$-adic fractal string $\mathcal L_p$.

By contrast, the archimedean tubular zeta function (in the present one-dimensional situation) of a real fractal string $\mathcal L$ is given by 
\begin{equation}\label{real tubular zeta}
\zeta_{\mathcal L}(\varepsilon; s):=\frac{\zeta_{\mathcal L}(s)(2\varepsilon)^{1-s}}{s(1-s)},
\end{equation}
and the analog of the above sum in the archimedean tube formula of [\ref{L-vF2}] (as rewritten in 
[\ref{LapPe1}]) is given as in (\ref{sum residue}), except with $\mathcal L_p$ replaced by $\mathcal L$ and with 
$\mathcal D_{\mathcal L}(W)\cup \{0\}$ instead of $\mathcal D_{\mathcal L_p}(W)$. Note that $\zeta_{\mathcal L}(\varepsilon; s)$ typically has a pole at $s=0$, whereas 
$\zeta_{\mathcal L_p}(\varepsilon; s)$ doesn't.
\end{remark}

 \begin{corollary}[$p$-Adic fractal tube formula]\label{ftf}
 If, in addition to the hypotheses in Theorem \ref{dtf}, we assume that all the visible complex dimensions of 
 $\mathcal{L}_p$ are  simple, then  
\begin{equation}\label{pftf} 
V_{\mathcal L_p}(\varepsilon)= \sum_{ \omega \in \mathcal{D}_{\mathcal L_p}(W)} 
c_{\omega} \frac{ \varepsilon^{1-\omega}} {1-\omega } 
+ \mathcal{R}_p (\varepsilon),
\end{equation}
where $c_{\omega}=p^{-1}res\left(\zeta_{\mathcal L_p}; \omega\right)$. 
Here, the error term $\mathcal R_p$ is given by (\ref{error term})
and is estimated by (\ref{estimate}) in the languid case. 
Furthermore, we have $ \mathcal{R}_p (\varepsilon) \equiv 0$ in the strongly languid case  provided we choose $W=\mathbb C$. 
 \end{corollary}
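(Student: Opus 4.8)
The plan is to deduce Corollary~\ref{ftf} directly from the distributional explicit tube formula of Theorem~\ref{dtf}, by evaluating each residue appearing in the sum~(\ref{detf}) under the additional hypothesis that all visible complex dimensions are simple.

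First I would invoke Theorem~\ref{dtf}(i): under the stated hypotheses,
\[
V_{\mathcal L_p}(\varepsilon)= \sum_{\omega \in \mathcal D_{\mathcal L_p}(W)} \mathrm{res}\!\left(\frac{p^{-1}\zeta_{\mathcal L_p}(s)\varepsilon^{1-s}}{1-s};\,\omega\right) + \mathcal R_p(\varepsilon),
\]
with $\mathcal R_p$ the error term~(\ref{error term}), estimated by~(\ref{estimate}) in the languid case, and $\mathcal R_p\equiv 0$ with $W=\mathbb C$ in the strongly languid case (on test functions supported in compact subsets of $[0,A)$). Since neither the error term nor its estimate is affected by the simplicity assumption, the task reduces to rewriting the sum of residues.

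Next, fixing $\omega\in\mathcal D_{\mathcal L_p}(W)$ (a pole of $\zeta_{\mathcal L_p}$ in $W$), I would first note that, by Corollary~\ref{corollary sigma} together with the standing hypothesis $\sigma_{\mathcal L_p}<1$, every such $\omega$ satisfies $\Re(\omega)\le\sigma_{\mathcal L_p}<1$; in particular $\omega\neq 1$. Hence $g(s):=p^{-1}\varepsilon^{1-s}/(1-s)$ is holomorphic and nonvanishing at $s=\omega$, so that the only singularity of the nonarchimedean tubular zeta function $\zeta_{\mathcal L_p}(\varepsilon;s)=\zeta_{\mathcal L_p}(s)\,g(s)$ (see~(\ref{tubular zeta})) at $s=\omega$ is the assumed simple pole of $\zeta_{\mathcal L_p}$. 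Applying the elementary identity $\mathrm{res}(fg;\omega)=g(\omega)\,\mathrm{res}(f;\omega)$, valid when $f$ has a simple pole at $\omega$ and $g$ is holomorphic there, with $f=\zeta_{\mathcal L_p}$, I obtain
\[
\mathrm{res}\!\left(\frac{p^{-1}\zeta_{\mathcal L_p}(s)\varepsilon^{1-s}}{1-s};\,\omega\right)=\frac{p^{-1}\varepsilon^{1-\omega}}{1-\omega}\,\mathrm{res}(\zeta_{\mathcal L_p};\omega)=c_\omega\,\frac{\varepsilon^{1-\omega}}{1-\omega},
\]
with $c_\omega=p^{-1}\mathrm{res}(\zeta_{\mathcal L_p};\omega)$. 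Substituting this term by term into the displayed formula yields~(\ref{pftf}); since this is merely a pointwise identification of each residue, it preserves the partial sums and is therefore compatible with the (conditionally convergent, distributionally interpreted) summation in Theorem~\ref{dtf}.

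The argument is essentially a routine residue computation; the only place the hypotheses genuinely intervene is in ensuring that $1$ is not a visible complex dimension, so that the factor $1/(1-s)$ neither increases the order of any pole of $\zeta_{\mathcal L_p}$ inside $W$ nor contributes an extra term to the sum. This is exactly where $\sigma_{\mathcal L_p}<1$ and the requirement that the screen lie strictly to the left of $\Re(s)=1$ are used. Everything else --- the form and the estimate of the error term, and the passage to the strongly languid (exact) case --- carries over verbatim from Theorem~\ref{dtf}.
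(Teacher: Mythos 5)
Your proposal is correct and is exactly the intended derivation: the paper treats the corollary as an immediate consequence of Theorem \ref{dtf}, obtained by evaluating each residue via $\mathrm{res}(fg;\omega)=g(\omega)\,\mathrm{res}(f;\omega)$ for the simple pole $\omega$ of $\zeta_{\mathcal L_p}$ and the factor $p^{-1}\varepsilon^{1-s}/(1-s)$, which is holomorphic at $\omega$ since $\Re(\omega)\le\sigma_{\mathcal L_p}<1$. Your added remarks on why $s=1$ causes no trouble and why the error term is unaffected are accurate and match the paper's framework.
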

 
 \begin{remark}
 In [\ref{L-vF2}, Ch. 8], under different sets of assumptions, both distributional and pointwise tube formulas are obtained for archimedean fractal strings (and also, for archimedean self-similar fractal strings). 
 (See, in particular, Theorems 8.1 and 8.7, along with \S8.4 in [\ref{L-vF2}].)
 At least for now, in the nonarchimedean case, we limit ourselves to discussing distributional explicit tube formulas. We expect, however, that under appropriate hypotheses, one should be able to obtain a pointwise fractal tube formula for $p$-adic fractal strings and especially, for $p$-adic self-similar strings. 
 In fact, for the simple examples of the nonarchimedean Cantor, Euler and Fibonacci strings, the direct derivation of the fractal tube formula (\ref{pftf})
 yields a formula that is valid pointwise and not just distributionally. (See, in particular, Examples  
  \ref{cantor volume}, \ref{euler volume}, and \ref{fibonacci}.) We leave the consideration of such possible extensions to a future work.
 \end{remark}
 
 \begin{example}[Fractal tube formula for the $p$-adic Euler string]\label{eulerrevisit}
 We now explain how to recover from Theorem \ref{dtf} (or Corollary \ref{ftf}) the tube formula for the Euler string $\mathcal E_p$ obtained via a direct computation in Example \ref{euler volume} of
  \S \ref{inner tube}.
 Indeed, it follows from Corollary \ref{ftf} (applied with $W=\mathbb C$)
  that 
 \begin{equation}\label{etf}
 V_{\mathcal E_p}(\varepsilon)= \frac{1}{p}\sum_{\omega \in \mathcal D_{\mathcal E_p} }
 res(\zeta_{\mathcal E_p}; \omega)
   \frac{\varepsilon^{1-\omega}}{ 1-\omega },
 \end{equation}
 which is exactly the expression obtained for $ V_{\mathcal E_p}(\varepsilon)$ in formula 
 (\ref{VolumeEuler}) of Example \ref{euler volume} since
\[ res(\zeta_{\mathcal E_p}; \omega)=\frac{1}{\log p}\]
  for all $\omega \in \mathcal D_{\mathcal E_p}.$
  (This follows easily from the expression of $\zeta_{\mathcal L_p}$ obtained in Equation
   (\ref{ptheuler}).)
  Note that Corollary \ref{ftf} can be applied here in the strongly languid case when $W=\mathbb C$ and 
  $ \mathcal{R}_p (\varepsilon) \equiv 0$  since, in light of the discussion in \S\ref{Euler string}, all the complex dimensions of $\mathcal E_p$ are simple and $\zeta_{\mathcal E_p}$ is clearly strongly languid of order $\kappa: =0$ and with the constant $A:=p^{-1}.$
  Furthermore, formula (\ref{etf}) can be rewritten in the following more concrete form:
   \begin{equation}\label{cetf}
 V_{\mathcal E_p}(\varepsilon)= \frac{1}{p\log p}\sum_{n\in \mathbb Z}
   \frac{\varepsilon^{1-in\mathbf{p}}}{ 1-in\mathbf{p} },
 \end{equation}
 since
$ \mathcal D_{\mathcal E_p}=\{in\mathbf{p} : n\in \mathbb Z\}$ and $\mathbf{p}=2\pi/\log p$ 
  (as in Equation (\ref{cdes}) of \S\ref{Euler string}).
  
  Finally, note that since the  series 
  \[
  \sum_{n\in \mathbb Z}
   \frac{\varepsilon^{1-in\mathbf{p}}}{ 1-in\mathbf{p} }  \]
   converges pointwise because the associated Fourier series 
   $ \sum_{n\in \mathbb Z}
   \frac{e^{2\pi in x}}{ 1-in\mathbf{p} }$ 
   is pointwise convergent on $\mathbb R$, the $p$-adic fractal tube formulas (\ref{etf})--(\ref{cetf}) actually converge pointwise rather than just distributionally. 
  \end{example}

 \section{Nonarchimedean Self-similar  Strings }\label{sss}
 Nonarchimedean (or $p$-adic) self-similar strings form an important class of $p$-adic fractal strings. In this section, we first recall the construction of these strings, as provided in [\ref{LapLu2}]; see 
 \S\ref{construction}.\footnote{This construction is the nonarchimedean analog of the geometric construction of real (or archimedean) self-similar strings carried out in [\ref{L-vF2}, \S2.1].}  Furthermore, we give an explicit expression for their geometric zeta functions and deduce from it the periodic structure of their poles (or complex dimensions) and zeros, as obtained in 
 [\ref{LapLu2}]; see \S \ref{gzf sss}--\ref{zeros and poles}. 
 Moreover, in \S\ref{exact tf}, we deduce from the results of \S\ref{explicit tf} and \S\ref{gzf sss}--\ref{zeros and poles} the special form of the fractal tube formula for $p$-adic self-similar strings, as obtained in
  [\ref{LapLu3}]. Finally, in \S\ref{amc}, we apply this latter result in order to calculate the average Minkowski content of such strings, as is also done in [\ref{LapLu3}]. 
 
 \subsection{Geometric Construction}\label{construction}
 
 Before explaining how to construct arbitrary $p$-adic self-similar strings, we need to introduce a definition and a few facts pertaining to $p$-adic similarity transformations. 
 
 \begin{definition}\label{similarity mapping}
A map $\Phi: \mathbb Z_p\longrightarrow \mathbb Z_p$ is called a \emph{similarity contraction mapping} of $\mathbb{Z}_p$ if there is a real number $r \in (0,1)$ such that 
\[
|\Phi(x)-\Phi(y)|_p=r\cdot|x-y|_p,
\]
for all $x, y \in \mathbb Z_p$. 
\end{definition}
 
Unlike in Euclidean space (and in the real line $\mathbb R$, in particular), it is not true that   every similarity transformation of $\mathbb Q_p$ (or of $\mathbb Z_p$) is necessarily affine.
Actually, in the nonarchimedean world (for example, in $\mathbb Q_p^d$, with $d\geq 1$), and in the $p$-adic line $\mathbb Q_p$, in particular, there are a lot of similarities which are not affine. 
However, it is known (see, e.g., [\ref{Sch}]) that every analytic similarity must be affine.\footnote{Here, a map $f:\mathbb Q_p \longrightarrow \mathbb Q_p$ is said to be analytic if it admits a convergent power series expansion about 0, and with coefficients in $\mathbb Q_p$, that is convergent in all of $\mathbb Q_p$.}
    Hence, from now, we are working with a similarity contraction mapping 
     $\Phi: \mathbb Z_p\longrightarrow \mathbb Z_p$ that is affine. 
     Thus we assume that there exist constants $a,b\in \mathbb Z_p$ with $|a|_p<1$ such that    
    $\Phi(x)=ax+b$ for all $x\in \mathbb Z_p$. Regarding the scaling factor $a$ of the contraction, it is well known that it can be written as $a=u\cdot p^n,$ for some unit $u\in \mathbb{Z}_p$ (i.e., $|u|_p=1$) and $n\in \mathbb N^{*}$ (see [\ref{Neu}]).  Then $r=|a|_p=p^{-n}.$  We summarize this fact in the following lemma:
 
  \begin{lemma}\label{scaling} 
  Let $\Phi(x)=ax+b$ be an affine similarity contraction mapping of $\mathbb {Z}_p$ with the scaling ratio $r$. Then $b\in \mathbb Z_p$ and  $a\in p\mathbb Z_p,$ and the scaling factor is $r=|a|_p=p^{-n}$ for some $n\in \mathbb N^{*}$. 
     \end{lemma}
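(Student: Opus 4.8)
<br>

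The plan is to verify the three assertions of Lemma \ref{scaling} in turn: that $b \in \mathbb{Z}_p$, that $a \in p\mathbb{Z}_p$, and that consequently $r = |a|_p = p^{-n}$ for some $n \in \mathbb{N}^*$. The starting point is the hypothesis established just before the lemma: $\Phi$ is an affine map $\Phi(x) = ax + b$ with $a, b \in \mathbb{Q}_p$, and $\Phi$ sends $\mathbb{Z}_p$ into $\mathbb{Z}_p$ with scaling ratio $r \in (0,1)$, i.e.\ $|\Phi(x) - \Phi(y)|_p = r\,|x-y|_p$ for all $x, y \in \mathbb{Z}_p$.

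First I would pin down $a$ and $r$. For any $x, y \in \mathbb{Z}_p$ with $x \neq y$, we have $\Phi(x) - \Phi(y) = a(x-y)$, so $|a|_p\,|x-y|_p = r\,|x-y|_p$, whence $|a|_p = r$. Since $r < 1$ and the value group of $|\cdot|_p$ on $\mathbb{Q}_p^\times$ is $\{p^k : k \in \mathbb{Z}\}$, we get $|a|_p = p^{-n}$ for some integer $n \geq 1$; in particular $a \in p\mathbb{Z}_p$, and $a$ can be written as $a = u p^n$ with $u$ a unit of $\mathbb{Z}_p$, which is the decomposition quoted from [\ref{Neu}]. Next, to see that $b \in \mathbb{Z}_p$, evaluate $\Phi$ at $0 \in \mathbb{Z}_p$: by hypothesis $\Phi(0) = a\cdot 0 + b = b$ lies in $\mathbb{Z}_p$, so $|b|_p \leq 1$. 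This already gives everything; alternatively, if one does not wish to assume $\Phi(\mathbb{Z}_p) \subseteq \mathbb{Z}_p$ explicitly, one can note that $\Phi(x) = ax + b$ with $|a|_p \leq 1$ forces $|b|_p \leq 1$ as soon as a single point of $\mathbb{Z}_p$ has image of absolute value $\leq 1$, by the ultrametric inequality $|b|_p = |\Phi(x) - ax|_p \leq \max\{|\Phi(x)|_p, |a|_p |x|_p\}$.

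There is essentially no hard step here: the lemma is a bookkeeping statement recording the constraints that $p$-adic geometry places on an affine similarity of $\mathbb{Z}_p$. The only point requiring a little care is the reliance on the discreteness of the value group $|\mathbb{Q}_p^\times|_p = p^{\mathbb{Z}}$ to convert the inequality $r < 1$ into the statement $r = p^{-n}$ with $n \in \mathbb{N}^*$ (rather than merely $0 < r < 1$); this is exactly the nonarchimedean feature that later forces all $p$-adic self-similar strings to be lattice strings, so it is worth stating explicitly. I would also remark that $n \geq 1$ (as opposed to $n \geq 0$) is precisely the condition $r < 1$, i.e.\ that $\Phi$ is a genuine contraction and not an isometry. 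With these observations assembled, the conclusions $b \in \mathbb{Z}_p$, $a \in p\mathbb{Z}_p$, and $r = |a|_p = p^{-n}$, $n \in \mathbb{N}^*$, follow immediately.
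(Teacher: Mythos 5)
Your proof is correct and follows essentially the same route as the paper, which simply records that $|a|_p=r<1$ forces $a=up^n$ with $u$ a unit and $n\in\mathbb N^*$ (citing [\ref{Neu}]), hence $r=|a|_p=p^{-n}$; your additional observations that $|a|_p=r$ comes from $\Phi(x)-\Phi(y)=a(x-y)$ and that $b=\Phi(0)\in\mathbb Z_p$ are exactly the details the paper leaves implicit. Nothing further is needed.
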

 
 \begin{figure}[ht]\label{p-adic self-similar construction}
\psfrag{dots}{$\vdots$}
\psfrag{Zp}{$\mathbb Z_p$}
\psfrag{P1}{$\Phi_1(\mathbb Z_p)$}
\psfrag{dots}{$\cdots$}
\psfrag{PJ}{$\Phi_N(\mathbb Z_p)$}
\psfrag{G1}{$G_1$}
\psfrag{GK}{$G_K$}
\psfrag{P11}{$\Phi_{11}(\mathbb Z_p)$}
\psfrag{P1J}{$\Phi_{1N}(\mathbb Z_p)$}
\psfrag{G11}{$\Phi_1(G_1)$}
\psfrag{G1K}{$\Phi_1(G_K)$}
\psfrag{PJ1}{$\Phi_{N1}(\mathbb Z_p)$}
\psfrag{PJJ}{$\Phi_{NN} \mathbb Z_p $}
\psfrag{GJ1}{$\Phi_N(G_1)$}
\psfrag{GJK}{$\Phi_N(G_K)$}
\psfrag{vdots}{$\vdots$}
\raisebox{-1cm}
{\psfig{figure=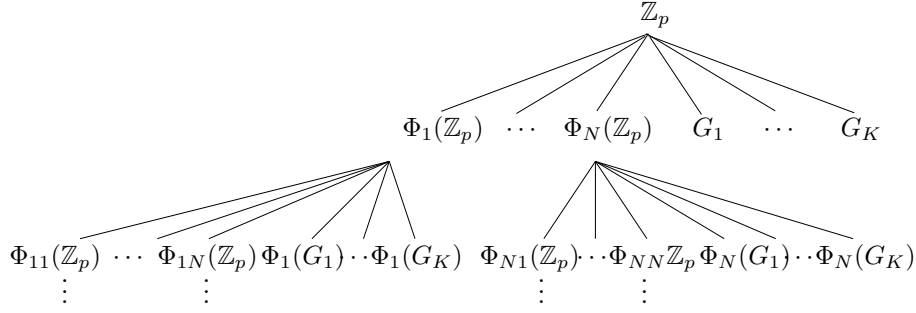, height=4cm}}
\caption{Construction of a  $p$-adic self-similar string.} \end{figure}

For simplicity, let us take the unit interval (or ball) $\mathbb{Z}_p$ in $\mathbb{Q}_p$ and construct a 
\emph{ $p$-adic} (or\emph{ nonarchimedean}) \emph{self-similar  string} $\mathcal{L}_p$ as follows (see [\ref{LapLu2}]).\footnote{In the sequel, $\mathcal L_p$ is interchangeably called a \emph{$p$-adic} or \emph{nonarchimedean} self-similar string.}
  Let $N\geq2$ be an integer and  $\Phi_1, \dots, \Phi_N: \mathbb Z_p\longrightarrow \mathbb Z_p$ be $N$ affine similarity contraction mappings with the respective scaling ratios $r_1, \dots, r_N \in (0,1)$ satisfying 
\begin{equation}\label{ratios}
1>r_1\geq r_2\geq \cdots \geq r_N>0;
\end{equation}
see Figure 9.  Assume that 
\begin{equation}\label{rho}
\sum_{j=1}^N r_j <1,
\end{equation}
and the images $\Phi_j(\mathbb{Z}_p)$ of $\mathbb{Z}_p$ do not overlap, i.e., 
$\Phi_j(\mathbb{Z}_p) \cap \Phi_l(\mathbb{Z}_p)= \emptyset$
for all $j\neq l$. 
Note that it follows from Equation (\ref{rho}) that $\bigcup_{j=1}^N \Phi_j(\mathbb Z_p)$ is not all of $\mathbb Z_p$.
We therefore have the following (nontrivial) decomposition of
$\mathbb Z_p$ into disjoint $p$-adic intervals:
\begin{equation}\label{first generation}         
 \mathbb Z_p = \bigcup_{j=1}^N \Phi_j(\mathbb Z_p)\cup \bigcup_{k=1}^K G_k,
\end{equation}
where $G_k$ is defined below. 

In a procedure reminiscent of the construction of the ternary Cantor set in \S\ref{C} or of the 3-adic Cantor set in \S\ref{C3}, we then subdivide the interval $\mathbb{Z}_p$ by means of the subintervals $\Phi_j(\mathbb{Z}_p)$.
Then the convex \footnote{We choose the convex components instead of the connected components because $\mathbb{Z}_p$ is totally disconnected. Naturally, no such distinction is necessary in the archimedean case;  cf. [\ref{L-vF2}, \S2.1.1]. Here and elsewhere in this paper, a subset $E$ of $\mathbb Q_p$ is said to be `convex' if for every $x,y\in E$, the $p$-adic segment $\{tx+(1-t)y: t\in \mathbb Z_p\}$ lies entirely in $E$.} 
components of 
\[
\mathbb Z_p \backslash \bigcup_{j=1}^N \Phi_j(\mathbb Z_p)
\]
are the first \emph{substrings} of the $p$-adic  self-similal string $\mathcal L_p$, say $G_1, G_2, \ldots, G_K,$ with $K\geq 1$.  These intervals $G_k$ are called the \emph{generators},  the deleted intervals in the first generation of the construction of $\mathcal L_p.$\footnote{Their archimedean counterparts are called `gaps' in  [\ref{L-vF2}, Ch. 2 and \S8.4], where archimedean self-similar strings are introduced.}
  The length of each $G_k$ is denoted by $g_k$; so that $g_k=\mu_H(G_k)$.\footnote{We note that the lengths $g_k$ ($k=1,2,\ldots, K$) will sometimes be called the (nonarchimedean) `gaps' or `gap sizes' in the sequel.}
Without loss of generality, we may assume that the lengths
$g_1, g_2, \dots, g_K$ of the first substrings (i.e., intervals) of $\mathcal L_p$ satisfy
\begin{equation}\label{gaps}
1>g_1\geq g_2\geq \cdots \geq g_K >0. 
\end{equation}
It follows from Equation (\ref{first generation}) and the additivity of Haar measure $\mu_H$ that 
\begin{equation}\label{gap identity}
\sum_{j=1}^N r_j + \sum_{k=1}^K g_k =1.
\end{equation}
We then repeat this process with each of the remaining subintervals 
$\Phi_j(\mathbb{Z}_p)$ of $\mathbb Z_p,$ for $j=1,2,\ldots,N$. 
And so on, ad infinitum. 
As a result, we obtain a \emph{$p$-adic  self-similar  string} $\mathcal{L}_p=l_1, l_2, l_3, \dots,$
consisting of intervals of length $l_n$ given by 
\begin{equation}\label{length}
r_{\nu_1}r_{\nu_2}\cdots r_{\nu_q}g_k, 
\end{equation}
for $k=1, \ldots, K$ and all choices of $q\in \mathbb N$ and $\nu_1, \ldots, \nu_q \in \{1, \ldots, N\}$.
Thus, the lengths are of the form 
$r_1^{e_1}\dots r_N^{e_N}g_k$ with $e_1, \dots, e_N \in \mathbb{N}$ (but not all zero).

In [\ref{LapLu2}], the classic notion of self-similarity is extended to the nonarchimedean setting, much as in [\ref{Hut}], where the underlying complete metric space is allowed to be arbitrary. We note that the next result follows by applying the classic Contraction Mapping Principle to the complete metric space of all nonempty compact subsets of $\mathbb Z_p.$\footnote{Recall that $\mathbb Z_p$ is  complete since it is a compact metric space (see \S\ref{C3} ).}

 \begin{theorem}[$p$-Adic self-similar set] \label{Hutchinson}
There is a unique nonempty compact subset $\mathcal S_p$ of $\mathbb Z_p$ such that 
\[
\mathcal S_p=\bigcup_{j=1}^N \Phi_j(\mathcal S_p).
\]
The set $\mathcal S_p$ is called the $p$-adic self-similar set associated with the self-similar system 
$\mathbf\Phi=\{\Phi_1, \ldots, \Phi_N\}.$ (It is also called the $\mathbf\Phi$-invariant set.)
\end{theorem}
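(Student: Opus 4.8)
The plan is to invoke the Contraction Mapping Principle on the complete metric space of nonempty compact subsets of $\mathbb Z_p$, equipped with the Hausdorff metric, exactly as in Hutchinson's classical argument but carried out over an arbitrary complete metric space (here $\mathbb Z_p$) rather than Euclidean space. First I would let $\mathcal K(\mathbb Z_p)$ denote the collection of all nonempty compact subsets of $\mathbb Z_p$, and equip it with the Hausdorff metric $d_H$ defined by $d_H(A,B)=\max\{\sup_{x\in A}d_p(x,B),\ \sup_{y\in B}d_p(y,A)\}$. Since $\mathbb Z_p$ is a compact (hence complete) metric space, a standard fact (valid for any complete metric space) gives that $(\mathcal K(\mathbb Z_p),d_H)$ is itself a complete metric space; in fact, since $\mathbb Z_p$ is compact, $\mathcal K(\mathbb Z_p)$ is compact as well.

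Next I would define the Hutchinson operator $\mathbf\Phi_{\!*}\colon \mathcal K(\mathbb Z_p)\to\mathcal K(\mathbb Z_p)$ by $\mathbf\Phi_{\!*}(A)=\bigcup_{j=1}^N\Phi_j(A)$. This is well defined: each $\Phi_j$ is continuous (indeed a similarity contraction), so $\Phi_j(A)$ is compact whenever $A$ is, and a finite union of compact sets is compact, and it is nonempty since $A$ is. The key estimate is that $\mathbf\Phi_{\!*}$ is a contraction on $(\mathcal K(\mathbb Z_p),d_H)$ with ratio at most $r_{\max}:=\max_j r_j=r_1<1$. This follows from two elementary properties of the Hausdorff metric: for any single map $\Phi_j$ with $|\Phi_j(x)-\Phi_j(y)|_p=r_j|x-y|_p$ we have $d_H(\Phi_j(A),\Phi_j(B))=r_j\,d_H(A,B)\le r_1\,d_H(A,B)$; and for finite unions one has $d_H\!\bigl(\bigcup_j A_j,\bigcup_j B_j\bigr)\le\max_j d_H(A_j,B_j)$. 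Combining these yields $d_H(\mathbf\Phi_{\!*}(A),\mathbf\Phi_{\!*}(B))\le r_1\,d_H(A,B)$, so $\mathbf\Phi_{\!*}$ is a strict contraction.

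Then the Contraction Mapping Principle (Banach fixed point theorem), applicable since $(\mathcal K(\mathbb Z_p),d_H)$ is complete, yields a unique fixed point $\mathcal S_p\in\mathcal K(\mathbb Z_p)$, i.e.\ a unique nonempty compact subset of $\mathbb Z_p$ with $\mathcal S_p=\bigcup_{j=1}^N\Phi_j(\mathcal S_p)=\mathbf\Phi_{\!*}(\mathcal S_p)$, which is precisely the assertion of the theorem. (As a byproduct, $\mathcal S_p=\lim_{m\to\infty}\mathbf\Phi_{\!*}^{\,m}(A_0)$ in $d_H$ for any starting $A_0\in\mathcal K(\mathbb Z_p)$, e.g.\ $A_0=\mathbb Z_p$, which connects with the constructions in Theorems~\ref{Cantor} and \ref{G}.)

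I do not anticipate a serious obstacle: the only place nonarchimedean geometry could matter is in verifying that $\mathcal K(\mathbb Z_p)$ is complete and that the Hausdorff-metric estimates hold, but these are purely metric-space facts that go through verbatim for any complete metric space, as already noted in the excerpt's reference to [\ref{Hut}]. The mild point to be careful about is simply that $\Phi_j$ maps $\mathbb Z_p$ into $\mathbb Z_p$ (guaranteed by hypothesis and by Lemma~\ref{scaling}, since $a\in p\mathbb Z_p$ and $b\in\mathbb Z_p$ force $\Phi_j(\mathbb Z_p)\subseteq\mathbb Z_p$), so that $\mathbf\Phi_{\!*}$ genuinely acts on $\mathcal K(\mathbb Z_p)$; beyond that, the argument is the standard Hutchinson/Banach one. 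Note that the non-overlapping and summability conditions \eqref{rho} play no role here — they are needed for the \emph{string} $\mathcal L_p$, not for the existence and uniqueness of the invariant set $\mathcal S_p$.
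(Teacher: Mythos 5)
Your proposal is correct and is exactly the argument the paper intends: the text preceding Theorem~\ref{Hutchinson} states that the result ``follows by applying the classic Contraction Mapping Principle to the complete metric space of all nonempty compact subsets of $\mathbb Z_p$,'' following Hutchinson's framework for an arbitrary complete metric space, and you have simply filled in the standard Hausdorff-metric details. No discrepancy to report.
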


The relationship between the $p$-adic self-similar string $\mathcal L_p$ and the above $p$-adic self-similar set $\mathcal S_p$ is given by the following theorem, also obtained in [\ref{LapLu2}]:\footnote{In Theorem \ref{complementary}, $\mathcal L_p$ is not viewed as a sequence of lengths but is viewed instead as the open set which is canonically given by a disjoint union of intervals (its $p$-adic convex components), as described in the above construction of a $p$-adic self-similar string.}

\begin{theorem}\label{complementary}
 (i)  $\mathcal L_p=\mathbb Z_p \backslash \mathcal S_p,$
the complement of $\mathcal S_p$ in $\mathbb Z_p$. 

(ii) 
$\mathcal{L}_p=\bigcup_{\alpha=0}^{\infty}\bigcup_{w\in W_{\alpha-1}}\bigcup_{k=1}^K\Phi_w(G_k),$
while
$\mathcal S_p=\bigcap_{\alpha=0}^{\infty}\bigcup_{w\in W_{\alpha}} \Phi_w(\mathbb{Z}_p),$
where $W_{\alpha}=\{1,2, \ldots, N\}^{\alpha}$ denotes the set of all finite words on $N$ symbols, of length $\alpha,$ and
 $\Phi_w:=\Phi_{w_{\alpha}} \circ \cdots  \circ \Phi_{w_1}$ 
 for~ $w=(w_1, \dots, w_{\alpha})\in W_{\alpha}. \footnote{By convention, $\Phi_w(G_k)=\emptyset$ if $w\in W_{-1}$.}$
\end{theorem}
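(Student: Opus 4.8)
The plan is to derive both parts from the self-similarity relation $\mathcal{S}_p=\bigcup_{j=1}^N\Phi_j(\mathcal{S}_p)$ of Theorem \ref{Hutchinson} together with the first-generation decomposition (\ref{first generation}), by iterating the latter. First I would prove part (ii) by induction on the number of generations, since part (i) will follow from it by a short measure-theoretic argument. For the inductive step, I start from (\ref{first generation}), namely $\mathbb{Z}_p=\bigcup_{j=1}^N\Phi_j(\mathbb{Z}_p)\cup\bigcup_{k=1}^K G_k$, and apply the injective affine maps $\Phi_w$ (for $w\in W_{\alpha}$) to it; because each $\Phi_w$ is an affine similarity, it carries the disjoint union of intervals $\Phi_j(\mathbb{Z}_p)$ and $G_k$ to a disjoint union of intervals $\Phi_{wj}(\mathbb{Z}_p)$ and $\Phi_w(G_k)$, and it respects the convex-component structure. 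Unwinding this for $\alpha=0,1,2,\dots$ yields
\[
\mathbb{Z}_p=\bigcup_{w\in W_{\alpha}}\Phi_w(\mathbb{Z}_p)\;\cup\;\bigcup_{\beta=0}^{\alpha-1}\bigcup_{w\in W_{\beta}}\bigcup_{k=1}^K\Phi_w(G_k),
\]
with all pieces pairwise disjoint (here I use the convention $\Phi_w=\mathrm{id}$ for $w\in W_0$ and the stated convention $\Phi_w(G_k)=\emptyset$ for $w\in W_{-1}$). Letting $\alpha\to\infty$, the second formula in (ii), $\mathcal{S}_p=\bigcap_{\alpha=0}^{\infty}\bigcup_{w\in W_{\alpha}}\Phi_w(\mathbb{Z}_p)$, is then exactly the statement that the nested compact sets $T_{\alpha}:=\bigcup_{w\in W_{\alpha}}\Phi_w(\mathbb{Z}_p)$ have intersection equal to $\mathcal{S}_p$; this is the standard consequence of Theorem \ref{Hutchinson} (the $\mathbf{\Phi}$-invariant set is $\bigcap_\alpha T_\alpha$ whenever the $T_\alpha$ are decreasing and $T_{\alpha+1}=\bigcup_j\Phi_j(T_\alpha)$, which holds since $\Phi_j(\mathbb{Z}_p)\subseteq\mathbb{Z}_p$ forces $T_{\alpha+1}\subseteq T_\alpha$). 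Granting this, the first formula in (ii) for $\mathcal{L}_p$ is forced: $\mathcal{L}_p$, as the open set under construction, is by definition everything deleted at some finite generation, i.e.\ $\bigcup_{\beta\geq 0}\bigcup_{w\in W_{\beta}}\bigcup_{k=1}^K\Phi_w(G_k)$, which matches the claimed union (the index shift to $W_{\alpha-1}$ and the convention $W_{-1}$ are just bookkeeping).

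For part (i), I would argue that $\mathbb{Z}_p$ is the disjoint union $\mathcal{L}_p\sqcup\mathcal{S}_p$. That $\mathcal{L}_p$ and $\mathcal{S}_p$ are disjoint follows from (ii): a point of $\mathcal{S}_p$ lies in $\Phi_w(\mathbb{Z}_p)$ for \emph{every} length of word $w$ extending a fixed address, so it can never fall into one of the deleted generators $\Phi_{w'}(G_k)$, since $G_k$ is disjoint from all $\Phi_j(\mathbb{Z}_p)$ and hence $\Phi_{w'}(G_k)$ is disjoint from $\Phi_{w'j}(\mathbb{Z}_p)$. That their union is all of $\mathbb{Z}_p$ follows by taking $\alpha\to\infty$ in the displayed finite-generation decomposition above: either a given $z\in\mathbb{Z}_p$ lies in some $\Phi_w(G_k)$ at a finite stage (so $z\in\mathcal{L}_p$), or it lies in $\bigcup_{w\in W_\alpha}\Phi_w(\mathbb{Z}_p)$ for all $\alpha$ (so $z\in\bigcap_\alpha T_\alpha=\mathcal{S}_p$). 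Hence $\mathcal{L}_p=\mathbb{Z}_p\setminus\mathcal{S}_p$.

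The main obstacle I anticipate is not the set-theoretic iteration but verifying that the pieces produced at each stage genuinely remain pairwise disjoint and that $\Phi_w$ maps convex components to convex components, so that the construction in \S\ref{construction} really does produce the union in (ii) as its canonical convex-component decomposition. This uses that each $\Phi_w$ is an affine similarity of $\mathbb{Z}_p$ (Lemma \ref{scaling}) — affine maps $x\mapsto ax+b$ with $|a|_p<1$ send $p$-adic balls to $p$-adic balls and preserve inclusion and disjointness — together with the non-overlapping hypothesis $\Phi_j(\mathbb{Z}_p)\cap\Phi_l(\mathbb{Z}_p)=\emptyset$ for $j\neq l$, which propagates to all words by induction because $\Phi_w$ is injective and $\Phi_{wj}(\mathbb{Z}_p)=\Phi_w(\Phi_j(\mathbb{Z}_p))$. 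Once disjointness at every generation is in hand, the rest is a routine passage to the limit, and the identification of $\bigcap_\alpha T_\alpha$ with $\mathcal{S}_p$ is exactly Theorem \ref{Hutchinson}.
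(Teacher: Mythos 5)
Your argument is correct, and it is the natural one: iterate the first-generation decomposition (\ref{first generation}) under the words $\Phi_w$, identify $\bigcap_{\alpha}\bigcup_{w\in W_{\alpha}}\Phi_w(\mathbb Z_p)$ with $\mathcal S_p$ via the uniqueness in Theorem \ref{Hutchinson}, and read off (i) from the resulting dichotomy (a point is either deleted at a finite stage or survives to every generation). The paper itself does not prove this theorem but defers it to the reference [\ref{LapLu2}]; your proof matches the construction of \S\ref{construction} step for step and is essentially the intended argument, including the correct identification of the only point needing care, namely that injectivity of the affine similarities propagates the non-overlap hypothesis to all generations so that the displayed decomposition is genuinely disjoint.
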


 \begin{figure}[h]
\psfrag{dots}{$\vdots$}
\psfrag{Z3}{$\mathbb Z_3$}
\psfrag{Z0}{$0+\mathbb Z_3$}
\psfrag{Z1}{$1+3\mathbb Z_3=G$}
\psfrag{Z2}{$2+\mathbb Z_3$}
\psfrag{1}{$0+9\mathbb Z_3$}
\psfrag{2}{$\Phi_1(G)$}
\psfrag{3}{$6+9\mathbb Z_3$}
\psfrag{4}{$2+9\mathbb Z_3$}
\psfrag{5}{$\Phi_2(G)$}
\psfrag{6}{$8+9\mathbb Z_3$}
\raisebox{-1cm}
{\psfig{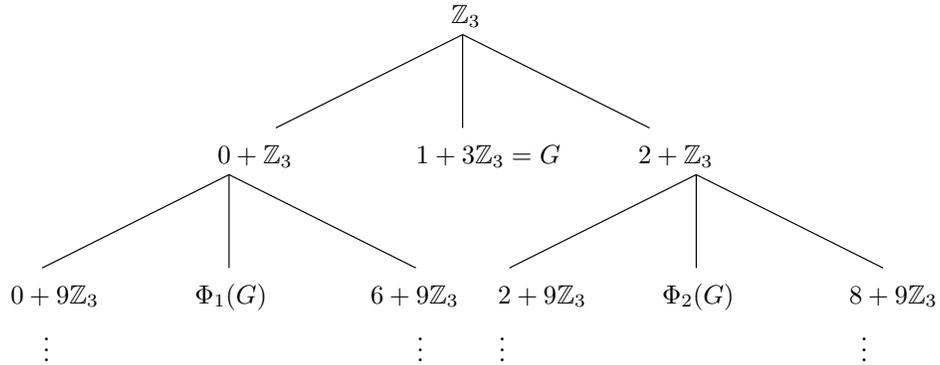}}
\caption{Construction of the nonarchimedean Cantor string $\mathcal{CS}_3$ via an IFS.}
\end{figure}
\begin{example}[Nonarchimedean Cantor string as a 3-adic self-similar string]\label{cantor}
In this example, we review the construction (given in \S\ref{CS3}) of the nonarchimedean Cantor string 
$\mathcal{CS}_3$, as introduced in [\ref{LapLu1}] and revisited in [\ref{LapLu2}]. Our main point here is to stress the fact that $\mathcal{CS}_3$ is a special case of a $p$-adic self-similar string, as constructed just above, and to prepare the reader for more general results about nonarchimedean self-similar strings, as given in the rest of \S\ref{sss}. 

Let $\Phi_1, \Phi_2: \mathbb Z_3\longrightarrow \mathbb Z_3$ be the two affine similarity contraction mappings of $\mathbb {Z}_3$ given by 
\begin{equation}\label{maps}
\Phi_1(x)=3x \quad \mbox{and}\quad \Phi_2(x)=2 + 3x,
\end{equation}
with the same scaling ratio $r=1/3$ (i.e., $r_1=r_2=1/3$).
By analogy with the construction of the real Cantor string (see \S\ref{CS}), subdivide the interval $\mathbb{Z}_3$ into subintervals 
\[
\Phi_1(\mathbb{Z}_3)=0+3\mathbb Z_3 \quad \mbox{and} \quad
 \Phi_2(\mathbb{Z}_3)=2+3\mathbb Z_3.\]
The remaining (3-adic) convex component
\[
\mathbb Z_3 \backslash \bigcup_{j=1}^2 \Phi_j(\mathbb Z_3)=1+3\mathbb Z_3=G
\]
is the first substring of a  $3$-adic self-similar string, called the \emph{nonarchimedean} 
 \emph{Cantor string} and denoted by $\mathcal{CS}_3$ [\ref{LapLu1}]. (Of course, this is the same $p$-adic fractal string $\mathcal{CS}_3$ as the one constructed in \S\ref{CS3}.)
The length of $G$ is
 $l_1=\mu_H(1+3\mathbb{Z}_p)=3^{-1}.$
By repeating this process with the remaining subintervals 
$\Phi_j(\mathbb{Z}_3),~ \mbox{for} ~ j=1,2$, and continuing on, ad infinitum, we eventually obtain a sequence 
$\mathcal{CS}_3 =l_1, l_2, l_3, \dots,$
associated with the open set resulting from this construction and consisting of intervals of lengths $l_v=3^{-v}$ with multiplicities $m_v=2^{v-1}$, for $v\in \mathbb N^*$. 
As we have seen in \S\ref{CS3}, and as follows from this construction (see Figure 10 and Equation (\ref{maps}), along with part (ii) of Theorem \ref{complementary}), the nonarchimedean Cantor string  $\mathcal{CS}_3$ can also be written as 
\begin{equation}\label{CS_3}
\mathcal{CS}_3=(1+3\mathbb{Z}_3) \cup  (3+9\mathbb{Z}_3) \cup  (5+9\mathbb{Z}_3) \cup \cdots.
\end{equation}
 By definition, the geometric zeta function of $\mathcal{CS}_3$ is given by
  \begin{eqnarray*}
\zeta_{\mathcal{CS}_3}(s)&=&(\mu_H(1+3\mathbb{Z}_3))^s + (\mu_H(3+9\mathbb{Z}))^s + (\mu_H(5+9\mathbb{Z}_3))^s + \cdots\\
&=&\sum_{v=1}^{\infty}\frac{2^{v-1}}{ 3^{vs}} 
=\frac{3^{-s}}{1-2\cdot 3^{-s}} \quad \mbox{for} \quad \Re(s)>\log_32.
\end{eqnarray*}
Hence, by analytic continuation, the meromorphic extension of $\zeta_{\mathcal{CS}_3}$ to the entire complex plane  
$\mathbb C$ exists and  is given by
\begin{equation}\label{cantor zeta}
\zeta_{\mathcal{CS}_3}(s)=\frac{3^{-s}}{1-2\cdot 3^{-s}}, \quad  \mbox{for} ~s\in \mathbb C,
\end{equation}
with poles at 
\[\omega=\frac{\log2}{\log3} +i n \frac{ 2 \pi}{\log3}, \quad n \in \mathbb Z.\] 
 Therefore, we recover the fact that the set of complex dimensions  of $\mathcal {CS}_3$ is given by
\begin{equation}\label{cd cantor} 
\mathcal{D}_{\mathcal {CS}_3}=\{   D+i n \mathbf{p}~|~n \in \mathbb{Z} \},
\end{equation}
where $D=\log_32$ is the dimension of $\mathcal {CS}_3$ and $\mathbf{p}=2\pi / \log3$ is its oscillatory period.  (This terminology will be explained in \S\ref{Periodicity} and \S\ref{zeros and poles}.)
Naturally, Equations (\ref{cantor zeta}) and (\ref{cd cantor}) are in agreement with the statement of Theorem \ref{zcd}.
Finally, note that $\zeta_{\mathcal{CS}_3}$ is a rational function of $z:=3^{-s}$, i.e.,
\begin{equation*}
\zeta_{\mathcal{CS}_3}(s)=\frac{z}{1-2z}. 
\end{equation*}
We refer the interested reader to [\ref{LapLu1}]  as well as to \S\ref{C3}--\ref{comparison} above for additional information concerning the nonarchimedean Cantor string $\mathcal{CS}_3$ and the associated nonarchimedean Cantor set $\mathcal C_3$. 
We just mention here that in light of part (i) of Theorem \ref{complementary}, we can recover the 3-adic Cantor set $\mathcal C_3$ as the complement of the 3-adic Cantor string $\mathcal {CS}_3$ in the unit interval (and vice-versa):
\begin{equation}
\mathcal{CS}_3=\mathbb Z_3 \backslash \mathcal C_3, \quad \mbox{and so} \quad 
\mathcal C_3=\mathbb Z_3 \backslash \mathcal{CS}_3.
\end{equation}
Indeed, according to Theorem \ref{Hutchinson} (and in agreement with Theorem \ref{invariant} of \S\ref{C3}), $\mathcal C_3$ is the self-similar set associated with the IFS $\mathbf{\Phi}=\{\Phi_1, \Phi_2\}$.
\end{example}

\begin{example}[The explicit tube formula for the nonarchimedean Cantor string]\label{cantor volume}
In this example, we explain how to derive the exact fractal tube formula for $\mathcal{CS}_3$ as stated in Equation (\ref{tfCS3}), in two different ways:

(i) First, via a direct computation (much as we derived the tube formula for the $p$-adic Euler string in Example \ref{euler volume}). 

(ii) Second, as a special case of the general $p$-adic tube formula obtained in \S\ref{explicit tf}.

Let $\varepsilon >0$.  Then, by Theorem \ref{thin}, we have 
\begin{equation}
V_{\mathcal {CS}_3}(\varepsilon)=\frac{1}{3}\sum_{n=k+1}^{\infty}
\frac{2^{n-1}}{3^n}
=\frac{1}{3}\left( \frac{2}{3}\right)^k,
\end{equation}
where $k:=[\log_3\varepsilon^{-1}]$. 
Let $x:=\log_3\varepsilon^{-1}=k+\{x\}$, where $\{x\}$ is the fractional part of $x$. Then a simple computation shows that
$\left(\frac{2}{3}\right)^x=\varepsilon^{1-D} ~\mbox{and}~ e^{2\pi i n x}=\varepsilon^{-in\mathbf{p}}$, 
with $D=\log_32$ and $\mathbf{p}=2\pi /\log 3$ as in Example \ref{cantor}.
Using the Fourier expansion for $b^{-\{x\}}$, as given by Equation (\ref{fourier}), for $b=3^{-1}$
and the above value of $x$, we obtain an expansion in terms of the complex dimensions $\omega=D+in\mathbf{p}$ of $\mathcal{CS}_3$:

\begin{eqnarray}
V_{\mathcal{CS}_3}(\varepsilon)\nonumber
&=& \frac{3^{-1}}{2\log 3}\sum_{n\in \mathbb Z}\frac{\varepsilon^{1-D-in\textbf{p}}} {1-D-in\textbf{p}}\\
&=& \frac{3^{-1}}{ 2\log 3} \sum_{\omega \in \mathcal D_{\mathcal{CS}_3}}\frac{\varepsilon^{1-\omega  }}{1-\omega},\label{tfc2}
\end{eqnarray}
since $\mathcal D_{\mathcal{CS}_3}$ is given by (\ref{cd cantor}).
Next, using Equation (\ref{cantor zeta}), we see that
 \[res(\zeta_{\mathcal{CS}_3}; \omega)=\frac{1}{2\log3},\]
 independently of $\omega \in \mathcal{D}_{\mathcal{CS}_3 }$, and so the exact fractal tube formula for the nonarchimedean Cantor string is found to be 
\begin{equation}\label{tfc}
V_{\mathcal{CS}_3}(\varepsilon)=
\frac{1}{ 3} \sum_{\omega \in \mathcal D_{\mathcal {CS}_3}}res(\zeta_{\mathcal{CS}_3}; \omega)\frac{\varepsilon^{1-\omega  }}{1-\omega}.
\end{equation}
Note that since $\mathcal{CS}_3$ has simple complex dimensions, we may also apply Corollary \ref{ftf} (in the strongly languid case when $W=\mathbb C$) in order  to precisely recover Equation (\ref{tfc}). (Alternatively, we could use Theorem \ref{5.13} in \S\ref{exact tf} below.)

We may rewrite (\ref{tfc2}) or (\ref{tfc}) in the following form (which agrees with the tube formula to be obtained in Theorem \ref{5.13}):
\[
V_{\mathcal{CS}_3}(\varepsilon)=\varepsilon^{1-D}G_{\mathcal{CS}_3}(\log_{3}\varepsilon^{-1}),
\]
where (as in Equation (\ref{4}) of \S\ref{comparison}) $G_{\mathcal{CS}_3}$ is the nonconstant periodic function (of period 1) on $\mathbb R$ given by 
\[
G_{\mathcal{CS}_3}(x):=\frac{1}{6\log3}\sum_{n\in \mathbb Z}\frac{e^{2\pi inx}}{1-D-in\mathbf{p}}.
\]
Finally, we note that since the Fourier series 
\[
\sum_{n\in \mathbb Z}\frac{e^{2\pi inx}}{1-D-in\mathbf{p}}
\]
is pointwise convergent on $\mathbb R$, the above direct computation of $V_{\mathcal{CS}_3}(\varepsilon)$
shows that (\ref{tfc2}) and (\ref{tfc}) actually hold pointwise rather than distributionally.
\end{example}

\subsection{Geometric Zeta Function of  $p$-Adic Self-Similar  Strings}\label{gzf sss}
  In this section, as well as in \S\ref{Periodicity} and \S\ref{zeros and poles}, we will survey results obtained in [\ref{LapLu2}] about the geometric zeta functions and the complex dimensions of $p$-adic self-similar strings. 
  
  In the next theorem, we provide a first expression for the geometric zeta function of a nonarchimedean self-similar string. At first sight, this expression is almost identical to the one obtained in the archimedean case in [\ref{L-vF2}, Thm. 2.4]. 
  Later on, however, we will see that unlike in the archimedean case where the situation is considerably more subtle and complicated (cf. [\ref{L-vF2}, Thms. 2.17 and 3.6]), this expression can be significantly  simplified since the two potentially transcendental functions appearing in the denominator and numerator of Equation (\ref{sszf}) below can always be made rational; see Theorem 
  \ref{rationality} in \S\ref{Periodicity}.

\begin{theorem}\label{geometric zeta function}
Let $\mathcal{L}_p$ be a  $p$-adic self-similar  string with scaling ratios $\{r_j\}_{j=1}^N$ and gaps $\{g_k\}_{k=1}^K$, as in the above construction. 
Then the geometric zeta function of $\mathcal{L}_p$
has a meromorphic extension to the whole complex plane $\mathbb{C}$ and is given by
\begin{equation}\label{sszf}
\zeta_{\mathcal{L}_p}(s)=
\frac
{\sum_{k =1}^{K} g_{k}^s}
{1-\sum_{j=1}^N r_j ^s}, 
\quad  \mbox{for} \quad  s\in \mathbb{C}.
\end{equation}
\end{theorem}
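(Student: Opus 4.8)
The plan is to mimic the structure of the construction in \S\ref{construction}: the self-similar string $\mathcal L_p$ is built by first removing the $K$ generators $G_1,\dots,G_K$ from $\mathbb Z_p$, and then repeating the procedure inside each of the $N$ scaled copies $\Phi_j(\mathbb Z_p)$. The lengths of $\mathcal L_p$ are therefore exactly the numbers $r_{\nu_1}\cdots r_{\nu_q}\,g_k$ over all finite words $(\nu_1,\dots,\nu_q)$ on $\{1,\dots,N\}$ (including the empty word, $q=0$) and all $k\in\{1,\dots,K\}$, as recorded in Equation~(\ref{length}). Each such length occurs with multiplicity equal to the number of words producing it; more invariantly, part~(ii) of Theorem~\ref{complementary} gives the disjoint decomposition $\mathcal L_p=\bigcup_{\alpha\ge 0}\bigcup_{w\in W_{\alpha-1}}\bigcup_{k=1}^{K}\Phi_w(G_k)$, and since $\Phi_w$ scales Haar measure by $r_{w_1}\cdots r_{w_\alpha}$ we have $\mu_H(\Phi_w(G_k))=r_{w_1}\cdots r_{w_{\alpha}}\,g_k$.

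First I would write down the defining Dirichlet series using this decomposition. For $\Re(s)$ large,
\begin{equation}\label{proof-dirichlet}
\zeta_{\mathcal L_p}(s)=\sum_{\alpha=0}^{\infty}\ \sum_{w\in W_{\alpha}}\ \sum_{k=1}^{K}\bigl(r_{w_1}\cdots r_{w_{\alpha}}\,g_k\bigr)^{s}
=\Bigl(\sum_{k=1}^{K}g_k^{s}\Bigr)\sum_{\alpha=0}^{\infty}\Bigl(\sum_{j=1}^{N}r_j^{s}\Bigr)^{\!\alpha},
\end{equation}
where in the last step I factor the sum over words as an $\alpha$-fold product, using $\sum_{w\in W_\alpha}r_{w_1}^s\cdots r_{w_\alpha}^s=(\sum_{j=1}^N r_j^s)^\alpha$. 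Next I would check absolute convergence: since $0<r_j<1$ and $\sum_{j=1}^N r_j<1$ by~(\ref{rho}), for real $s\ge 1$ we have $\sum_j r_j^s\le\sum_j r_j<1$, so the geometric series in $\alpha$ converges; more precisely it converges for all $s$ with $\Re(s)>\sigma$, where $\sigma$ is the unique real solution of $\sum_j r_j^\sigma=1$ (this $\sigma$ is in $[0,1)$ and equals $D=D_M$ by Theorem~\ref{D=sigma} together with Remark~\ref{D<1}). Summing the geometric series gives
\begin{equation}\label{proof-closed}
\zeta_{\mathcal L_p}(s)=\frac{\sum_{k=1}^{K}g_k^{s}}{1-\sum_{j=1}^{N}r_j^{s}}
\end{equation}
on that half-plane. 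Finally, the right-hand side of~(\ref{proof-closed}) is manifestly meromorphic on all of $\mathbb C$: both $\sum_k g_k^s$ and $1-\sum_j r_j^s$ are entire functions of $s$ (finite sums of exponentials $g_k^s=e^{s\log g_k}$, $r_j^s=e^{s\log r_j}$), so the quotient is meromorphic, with poles only at the zeros of the denominator. This meromorphic function agrees with $\zeta_{\mathcal L_p}$ on a half-plane, hence is its meromorphic continuation, proving~(\ref{sszf}).

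The routine points that still need a line each: the factorization in~(\ref{proof-dirichlet}) requires absolute convergence to rearrange, which is exactly the $\Re(s)>\sigma$ condition just discussed; one should also note that the decomposition in Theorem~\ref{complementary}(ii) is genuinely disjoint and exhausts $\mathcal L_p$, so no length is over- or under-counted. The only real subtlety — the ``main obstacle'' — is bookkeeping rather than analysis: one must be careful that the lengths in~(\ref{length}) are counted \emph{with the correct multiplicities}, i.e.\ that distinct words $w\in W_\alpha$ contribute distinct intervals $\Phi_w(G_k)$ even when the corresponding \emph{lengths} $r_{w_1}\cdots r_{w_\alpha}g_k$ coincide. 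This is where the non-overlapping hypothesis $\Phi_j(\mathbb Z_p)\cap\Phi_l(\mathbb Z_p)=\emptyset$ for $j\ne l$ is used: it guarantees that the images $\Phi_w(\mathbb Z_p)$ for $w\in W_\alpha$ are pairwise disjoint (by induction on $\alpha$, since each $\Phi_j$ is injective), so the intervals $\Phi_w(G_k)$ are pairwise disjoint convex components of $\mathcal L_p$ and the triple sum in~(\ref{proof-dirichlet}) is exactly the series $\sum_j p^{-n_j s}$ of Definition~\ref{zetaLp}, with no collisions. Once this is in place, everything else is the elementary geometric-series computation above. I would also remark, for use later, that $\zeta_{\mathcal L_p}$ is thus a rational function of the variable $z=p^{-s}$ once one knows (Theorem~\ref{rationality}) that all $r_j$ and $g_k$ are integral powers of $p$, but that refinement is not needed for the present statement.
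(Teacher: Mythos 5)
Your proposal is correct and takes essentially the approach the paper intends: the paper itself gives no proof here (the result is surveyed from [\ref{LapLu2}] and modeled on the archimedean case [\ref{L-vF2}, Thm.\ 2.4]), and that argument is exactly your decomposition of $\mathcal L_p$ into the disjoint intervals $\Phi_w(G_k)$ of measure $r_{w_1}\cdots r_{w_\alpha}g_k$, followed by the geometric-series summation for $\Re(s)>\sigma$ and the observation that the resulting closed form is a quotient of entire exponential sums, hence meromorphic on all of $\mathbb C$. Your identification of the multiplicity bookkeeping (disjointness of the $\Phi_w(G_k)$ via the non-overlap hypothesis) as the only genuine subtlety is also right.
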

\begin{corollary}
The set of complex dimensions of a $p$-adic  self-similar  string $\mathcal{L}_p$ is contained in the set of complex solutions $\omega$ of the Moran equation $\sum_{j=1}^N r_j ^{\omega}=1.$ If the string has a single generator (i.e., if $K=1$), then this inclusion is an equality.\footnote{See Examples \ref{cantor}, \ref{fibonacci} and Theorem \ref{rationality}.}
\end{corollary}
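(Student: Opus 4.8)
The plan is to read the result straight off the closed-form expression for $\zeta_{\mathcal{L}_p}$ provided by Theorem \ref{geometric zeta function}. First I would observe that both the numerator $\sum_{k=1}^{K} g_k^s$ and the denominator $1-\sum_{j=1}^{N} r_j^s$ in Equation (\ref{sszf}) are \emph{entire} functions of $s$: each $g_k^s=e^{s\log g_k}$ and each $r_j^s=e^{s\log r_j}$ is entire (recall $g_k,r_j\in(0,1)$, so $\log g_k,\log r_j$ are well-defined real numbers), and a finite sum of entire functions is entire. Hence $\zeta_{\mathcal{L}_p}$, which by Theorem \ref{geometric zeta function} is meromorphic on all of $\mathbb{C}$, is a quotient of two entire functions, so every pole of it must be a zero of the denominator. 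Since the zero set of $1-\sum_{j=1}^{N} r_j^s$ is exactly the set of complex solutions $\omega$ of the Moran equation $\sum_{j=1}^{N} r_j^{\omega}=1$, this gives the inclusion $\mathcal{D}_{\mathcal{L}_p}\subseteq\{\omega\in\mathbb{C}:\sum_{j=1}^{N} r_j^{\omega}=1\}$ (taking $W=\mathbb{C}$, which is legitimate because $\zeta_{\mathcal{L}_p}$ is meromorphic on all of $\mathbb{C}$).

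For the reverse inclusion in the single-generator case $K=1$, the point is that no cancellation between numerator and denominator can occur. When $K=1$ the numerator is simply $g_1^s=e^{s\log g_1}$, which is nowhere vanishing on $\mathbb{C}$. Thus at any solution $\omega$ of the Moran equation the denominator vanishes, to some finite order $m\ge1$ (it is a nonconstant entire function), while the numerator is nonzero there; hence $\omega$ is genuinely a pole of $\zeta_{\mathcal{L}_p}$, of the same order $m$. Therefore $\mathcal{D}_{\mathcal{L}_p}=\{\omega\in\mathbb{C}:\sum_{j=1}^{N} r_j^{\omega}=1\}$ when $K=1$, which is the asserted equality.

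I do not anticipate any real obstacle; the only mild care needed is the bookkeeping of orders of zeros versus orders of poles, and the remark that in the general case $K\ge2$ the numerator $\sum_{k=1}^{K} g_k^s$ may well share zeros with the denominator, which is precisely why the inclusion need not be an equality when there is more than one generator (cf.\ Examples \ref{cantor}, \ref{fibonacci} and the rationality statement of Theorem \ref{rationality}, where the case $K=1$ reappears in concrete form).
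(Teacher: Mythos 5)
Your argument is correct and is precisely the one the paper intends: the corollary is read off directly from the closed form $\zeta_{\mathcal{L}_p}(s)=\bigl(\sum_{k=1}^{K}g_k^s\bigr)/\bigl(1-\sum_{j=1}^{N}r_j^s\bigr)$ of Theorem \ref{geometric zeta function}, with poles forced to be zeros of the denominator, and with the nonvanishing of $g_1^s$ ruling out cancellation when $K=1$. Your closing remark about possible numerator--denominator cancellation when $K\ge 2$ correctly identifies why the inclusion need not be an equality in general.
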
 

\begin{definition}\label{latticenonlattice} 
A $p$-adic  self-similar  string $\mathcal{L}_p$ is said to be lattice (or nonlattice) if the multiplicative group generated by the  scaling ratios 
$r_1, r_2, \ldots, r_N$ is discrete (or dense)  in $(0,\infty)$.\footnote{More precisely, much as in the archimedean case in [\ref{L-vF2}, Ch. 3], we say that $\mathcal L_p$ is \emph{lattice} if the multiplicative group generated by the \emph{distinct} scaling ratios of $\mathcal L_p$ is a subgroup of $(0,\infty)$ of rank one (i.e., if it is a free abelian group). As it turns out, in the present case of $p$-adic self-similar strings, it does not matter whether we use this slightly refined definition or the one given in Definition \ref{latticenonlattice}; see Remark \ref{remark 4.11}. } 
\end{definition}
 
 \begin{theorem}\label{lattice}
Every  $p$-adic self-similar  string is lattice.  
\end{theorem}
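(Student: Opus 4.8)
The plan is to reduce the statement entirely to the discreteness of the valuation group $|\mathbb{Q}_p^{\times}|_p=\{p^k:k\in\mathbb Z\}$ of the $p$-adic absolute value. First I would recall from the construction in \S\ref{construction} that each of the $N\geq 2$ similarity contractions $\Phi_j\colon\mathbb Z_p\to\mathbb Z_p$ defining $\mathcal L_p$ is affine, say $\Phi_j(x)=a_jx+b_j$ with $a_j,b_j\in\mathbb Z_p$ and $0<|a_j|_p<1$, and that its scaling ratio is $r_j=|a_j|_p$. By Lemma \ref{scaling}, we may write $a_j=u_jp^{n_j}$ with $u_j\in\mathbb Z_p$ a unit (i.e.\ $|u_j|_p=1$) and $n_j\in\mathbb N^{*}$, so that $r_j=|a_j|_p=p^{-n_j}$ for some positive integer $n_j$, for every $j=1,\dots,N$. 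This is the only genuinely $p$-adic input; everything else is elementary group theory.

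The second step is purely group-theoretic. The multiplicative subgroup $\mathcal G\leq(0,\infty)$ generated by $r_1,\dots,r_N$ is then $\mathcal G=\langle p^{-n_1},\dots,p^{-n_N}\rangle\subseteq\langle p\rangle=\{p^k:k\in\mathbb Z\}$. Setting $d:=\gcd(n_1,\dots,n_N)\in\mathbb N^{*}$, one has by B\'ezout's identity that $\mathcal G=\{p^{kd}:k\in\mathbb Z\}$, which is infinite cyclic --- in particular a free abelian group of rank one --- and discrete in $(0,\infty)$ (its only accumulation point in $[0,\infty)$ is $0$, which does not lie in $(0,\infty)$). Hence $\mathcal L_p$ is lattice in the sense of Definition \ref{latticenonlattice}. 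Moreover, the same computation applied to the \emph{distinct} scaling ratios still yields a rank-one group, so the slightly refined version of the lattice condition alluded to in the footnote to Definition \ref{latticenonlattice} holds as well.

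Since the result is a direct consequence of Lemma \ref{scaling} together with the fact that $\mathbb Z$ is a principal ideal domain, there is no real obstacle to overcome; the only point deserving care is making sure that Lemma \ref{scaling} is applicable to \emph{every} generator $\Phi_j$, that is, that the reduction from arbitrary similarities of $\mathbb Q_p$ to affine ones (justified in \S\ref{construction} via the cited fact that analytic similarities are affine) is in force throughout. Given that reduction, the argument above is complete. It is worth emphasizing that this is precisely where the nonarchimedean theory departs from the archimedean one: in $\mathbb R$ the scaling ratios can be arbitrary reals in $(0,1)$ and the group they generate may well be dense, whereas the discreteness of $|\mathbb Q_p^{\times}|_p$ forces every $p$-adic self-similar string to be lattice. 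One may also record, for later use in \S\ref{Periodicity}, that the proof identifies a natural multiplicative generator of the lattice, namely $r=p^{-d}$ with $d=\gcd(n_1,\dots,n_N)$, which anticipates the rationality statement of Theorem \ref{rationality}.
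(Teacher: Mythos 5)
Your proof is correct and follows exactly the paper's own route: the paper deduces the theorem (see Remark \ref{remark 4.11}) from Lemma \ref{scaling}, which forces every scaling ratio $r_j=p^{-n_j}$ to lie in the discrete multiplicative group $p^{\mathbb Z}$, so that the group generated by $r_1,\dots,r_N$ is the discrete (rank-one) group $p^{d\mathbb Z}$ with $d=\gcd(n_1,\dots,n_N)$. Your additional observations --- that the refined rank-one version of the lattice condition also holds and that $r=p^{-d}$ is the natural multiplicative generator --- likewise match what the paper records in Remark \ref{remark 4.11} and \S\ref{Periodicity}.
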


 \begin{remark}\label{remark 4.11} 
 Theorem \ref{lattice} follows from the fact that all the scaling ratios $r_j$ must belong to the multiplicative group $p^ {\mathbb Z }.$ In fact, much more is true since the gaps $g_k$ must also belong to  $p^{ \mathbb Z },$ as will be discussed below in more detail in \S\ref{Periodicity}. It follows that  $p$-adic self-similar  strings are lattice strings in a very strong sense, namely, their geometric zeta functions are \emph{rational functions} of a suitable variable $z$ (see Theorem \ref{rationality} below).  \end{remark}

\begin{remark}Theorem \ref{lattice} is in sharp contrast with the usual theory of real (or archimedean) self-similar strings developed in [\ref{L-vF2}, Chs. 2 and 3].  Indeed,  there are both lattice and nonlattice strings  in the archimedean case. Furthermore, generically, archimedean self-similar strings are nonlattice. Moreover, it is shown in [\ref{L-vF2}, Ch. 3] by using Diophantine approximation that every nonlattice string in $\mathbb R=\mathbb Q_{\infty}$ can be approximated by a sequence of lattice strings with oscillatory periods increasing to infinity. It follows that the complex dimensions of an archimedean nonlattice string are quasiperiodically distributed (in a very precise sense, that is explained in \emph{loc. cit.}) because the complex dimensions of archimedean lattice strings are periodically distributed along finitely many vertical lines. Clearly, there is nothing of this kind in the nonarchimedean case since $p$-adic self-similar strings are necessarily lattice. 
\end{remark}

\begin{remark} The $p$-adic Euler string $\mathcal E_p$, discussed in \S\ref{Euler string}, is \emph{not} self-similar because  $\mathcal E_p$ has dimension
$D=0,$ whereas the requirement that $N\geq2$ in the definition of a $p$-adic self-similar string implies that 
$D>0$ for any  $p$-adic  self-similar string.
\end{remark}

\subsection{$p$-Adic Self-Similar Strings Are Strongly Lattice}\label{Periodicity}
 A small modification of the above argument enables us to show that every $p$-adic self-similar  string is lattice in a much stronger sense, as we now explain. It will follow (see Theorem \ref{periodicity}) that \emph{not only the poles} (i.e., the complex dimensions of $\mathcal{L}_p$) \emph{but also the zeros of $\zeta_{\mathcal{L}_p}$} \emph{are periodically distributed.} Accordingly, we will say that   $p$-adic self-similar strings are \emph{strongly lattice}.

We introduce some necessary notation. First, by Lemma \ref{scaling}, we write
\begin{equation*}\label{r}
r_j=p^{-n_j}, \quad \mbox{with} \quad n_j\in \mathbb N^* \quad  \mbox{for} \quad j=1, 2, \ldots, N.
\end{equation*}
Second, we write 
\begin{equation*}
g_k=\mu_H(G_k)=p^{-m_k}, \quad \mbox{with} \quad m_k\in \mathbb N^* \quad \mbox{for} \quad k=1, 2, \ldots, K.
\end{equation*}
Third, let 
\begin{equation*}
d= \mbox{gcd}\{n_1, \ldots, n_N, m_1, \ldots, m_K\}.
\end{equation*}
Then there exist positive integers $n_j' ~\mbox{and}~ m_k'$ such that
\begin{equation}\label{n'}
n_j=dn_j'  \quad \mbox{and} \quad m_k=dm_k' \quad \mbox{for}  \quad j=1, \ldots, N  \quad \mbox{and} \quad  k=1,\ldots, K.
\end{equation}
Finally, we set \footnote{Note that by construction, $r_j=r^{n_j'}$ and $g_k=r^{m_k'}$ for $j=1, \dots, N$ and $k=1,\dots, K.$ Hence, $r$ is the multiplicative generator in $(0,1)$ of the rank one group generated by $\{r_1, \ldots, r_N, g_1, \dots, g_K\}$ (or, equivalently, by either
 $\{r_1, \ldots, r_N\}$ or $\{g_1, \ldots, g_K\}$).}
 
\begin{equation}\label{Q}
p^d=1/r.
\end{equation}
Without loss of generality, we may assume that the scaling ratios $r_j$ and  the gaps $g_k$ are written in nonincreasing order as in Equations (\ref{ratios}) and (\ref{gaps}), respectively; so that 
\begin{equation}
0<n_1'\leq n_2' \leq \cdots \leq n_N'  \quad \mbox{and} \quad 0<m_1'\leq m_2' \leq \cdots \leq m_K'. 
\end{equation}

\begin{theorem}\label{rationality}
Let $\mathcal{L}_p$ be a $p$-adic self-similar  string and $z=r^s,$ with $r=p^{-d}$ as in Equation (\ref{Q}).
Then the geometric zeta function
 $\zeta_{\mathcal{L}_p}$ of $\mathcal{L}_p$ is a rational function in $z$. Specifically, 
 \begin{equation}
\zeta_{\mathcal{L}_p}(s)=
\frac
{\sum_{k =1}^{K} z^{m_{k}'}}
{1-\sum_{j=1}^N z ^{n_j'}},
\end{equation}
where $m_k', n_j'\in \mathbb N^*$ are given by Equation (\ref{n'}).
\end{theorem}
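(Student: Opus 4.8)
The plan is to bootstrap from the closed-form expression for $\zeta_{\mathcal{L}_p}$ already established in Theorem \ref{geometric zeta function}, namely $\zeta_{\mathcal{L}_p}(s)=\bigl(\sum_{k=1}^{K} g_k^{s}\bigr)\big/\bigl(1-\sum_{j=1}^{N} r_j^{s}\bigr)$ as a meromorphic function on all of $\mathbb{C}$, and simply substitute the nonarchimedean values of the scaling ratios and the gap sizes. By Lemma \ref{scaling}, each $\Phi_j(x)=a_j x+b_j$ has $|a_j|_p = r_j = p^{-n_j}$ with $n_j\in\mathbb{N}^*$, which already forces $r_j^{s}=p^{-n_j s}$.

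The first genuine step is to record that the gaps are likewise negative integer powers of $p$, i.e.\ $g_k=p^{-m_k}$ with $m_k\in\mathbb{N}^*$. This is where the (short) content of the argument sits. Since $a_j=u_j p^{n_j}$ for a unit $u_j$, the image $\Phi_j(\mathbb{Z}_p)=b_j+a_j\mathbb{Z}_p=b_j+p^{n_j}\mathbb{Z}_p$ is a $p$-adic ball; and in an ultrametric space the complement in $\mathbb{Z}_p$ of a finite disjoint union of balls is again a finite disjoint union of balls --- precisely the convex components $G_1,\dots,G_K$ of $\mathbb{Z}_p\setminus\bigcup_{j=1}^{N}\Phi_j(\mathbb{Z}_p)$ from the construction of \S\ref{construction}. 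Hence each $G_k$ is a ball, so $g_k=\mu_H(G_k)=p^{-m_k}$ for some $m_k\in\mathbb{Z}$, and since $g_k<1$ by \eqref{gaps} in fact $m_k\in\mathbb{N}^*$. One can also see this concretely by iterating the self-duplication identity \eqref{decomposition} finitely many times to a common scale at which every $\Phi_j(\mathbb{Z}_p)$ is a union of balls; what remains is then automatically a union of balls at that scale, and the $G_k$ are obtained by regrouping them into maximal balls.

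The second step is purely formal. With $d=\gcd\{n_1,\dots,n_N,m_1,\dots,m_K\}$ and $n_j=dn_j'$, $m_k=dm_k'$, $n_j',m_k'\in\mathbb{N}^*$ as in \eqref{n'}, and $z=r^{s}=p^{-ds}$ as in \eqref{Q}, one has $r_j^{s}=p^{-n_j s}=(p^{-ds})^{n_j'}=z^{n_j'}$ and likewise $g_k^{s}=z^{m_k'}$. Substituting into the expression from Theorem \ref{geometric zeta function} yields
\[
\zeta_{\mathcal{L}_p}(s)=\frac{\sum_{k=1}^{K} z^{m_k'}}{1-\sum_{j=1}^{N} z^{n_j'}},
\]
which exhibits $\zeta_{\mathcal{L}_p}$ as the asserted rational function of $z$; since $s\mapsto p^{-ds}$ is entire and nowhere vanishing, this also re-derives the meromorphic continuation to $\mathbb{C}$, with poles exactly at the $s$ for which $z=p^{-ds}$ solves $1-\sum_{j=1}^{N} z^{n_j'}=0$.

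I expect the only real obstacle --- and it is a mild one --- to be the justification that every gap size $g_k$ is a negative integer power of $p$; the remainder is substitution. This is exactly the feature that makes the $p$-adic theory \emph{strongly lattice} and has no archimedean counterpart, so it merits a careful statement even though its proof is elementary.
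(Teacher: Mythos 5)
Your proposal is correct and follows essentially the same route as the paper: substitute $r_j=p^{-n_j}$ (Lemma \ref{scaling}) and $g_k=p^{-m_k}$ into the closed form of Theorem \ref{geometric zeta function} and rewrite everything in terms of $z=p^{-ds}$. Your explicit justification that each generator $G_k$, being a convex component (hence a maximal ball, cf.\ Remark \ref{convex component}), has Haar measure a negative integral power of $p$ is exactly the ``strong lattice'' observation the paper records in setting up the notation of \S\ref{Periodicity}.
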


 \begin{definition}\label{oscillatoryperiod} 
 Let $\mathbf{p}=\frac{2\pi}{d\log p}$. Then $\mathbf{p}$ is called the \emph{oscillatory period} of $\mathcal{L}_p.$ \end{definition}

\subsection{Periodicity of the Poles and the Zeros of $\zeta_{\mathcal L_p}$}\label{zeros and poles}

The following result (also from [\ref{LapLu2}]) is the nonarchimedean counterpart of [\ref{L-vF2}, Thms.  2.17 and 3.6], which provide the rather subtle structure of the complex dimensions of archimedean self-similar strings. It is significantly simpler, however, due in part to the fact that nonlattice $p$-adic self-similar strings do not exist.  

To avoid any confusion, we stress that in the statement of the next theorem, $\zeta_{\mathcal L_p}$ is viewed as a function of the original complex variable $s$. Moreover,  it follows from Theorem \ref{D=sigma} in \S\ref{Mdimension} above and from Theorem \ref{periodicity} below that $D$, 
 the dimension of $\mathcal L_p$, defined as the \emph{abscissa of convergence} of the Dirichlet series originally defining $\zeta_{\mathcal L_p}$ (and sometimes also denoted by $\sigma=\sigma_{\mathcal L_p} $ here) coincides with $\delta$ and the \emph{Minkowski dimension} $D_M=D_{M,\mathcal L_p}$ of $\mathcal L_p$: 
 \[D=D_M=\sigma=\delta,\]
  where $\delta$ is the \emph{similarity dimension} of $\mathcal L_p$, i.e., the unique \emph{real} solution of the Moran equation $\sum_{j=1}^Nr_j^s=1$. Hence, in the present case of $p$-adic self-similar  strings, there is no need to distinguish between these various notions of `fractal dimensions'. (See Remark \ref{D<1} below for more information.)

\begin{theorem}[Structure of the complex dimensions]\label{periodicity} 
Let $\mathcal{L}_p$ be a nontrivial $p$-adic self-similar   string. Then \\
\indent(i) The complex dimensions of $\mathcal{L}_p$ and the zeros of $\zeta_{\mathcal{L}_p}$ are periodically distributed along finitely many  vertical lines, with period $\mathbf p$, the oscillatory period of $\mathcal{L}_p$ (as given in Definition \ref{oscillatoryperiod}). \\
\indent(ii) Furthermore, along a given vertical line, each pole (respectively, each zero) of $\zeta_{\mathcal{L}_p}$ has the same multiplicity. \\
\indent(iii) Finally, the dimension $D$ of $\mathcal{L}_p$ is the only complex dimension that is located on the real axis.\footnote{By contrast, it is immediate to check that there are no real zeros (still in the $s$ variable).} Moreover, $D$ is simple \footnote{i.e., $D$ is a simple pole of $\zeta_{\mathcal{L}_p}.$} and is located on the right most vertical line. That is, $D$ is equal to the maximum of the real parts of the complex dimensions.  
\end{theorem}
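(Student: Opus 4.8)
The plan is to deduce everything from the rational‐function formula for $\zeta_{\mathcal{L}_p}$ in Theorem~\ref{rationality}, combined with the elementary mapping properties of the substitution $z=r^{s}$. Write $P(z):=\sum_{k=1}^{K}z^{m_{k}'}$ and $Q(z):=1-\sum_{j=1}^{N}z^{n_{j}'}$, so that by Theorem~\ref{rationality} one has, as an identity of meromorphic functions on all of $\mathbb{C}$, $\zeta_{\mathcal{L}_p}(s)=P(z)/Q(z)$ with $z=z(s)=r^{s}=p^{-ds}$ and $r=p^{-d}$. The key preliminary observation is that $s\mapsto z(s)=e^{-ds\log p}$ is an entire map onto $\mathbb{C}\setminus\{0\}$ whose derivative $z'(s)=-(d\log p)\,z(s)$ never vanishes, so it is a holomorphic covering; it is periodic of period $i\mathbf{p}$, where $\mathbf{p}=2\pi/(d\log p)$ is the oscillatory period of Definition~\ref{oscillatoryperiod}. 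Hence, for every $z_{0}\neq0$ the preimage $z^{-1}(z_{0})$ is a single arithmetic progression $\{s_{0}+i\nu\mathbf{p}:\nu\in\mathbb{Z}\}$, all of whose points lie on the one vertical line $\Re(s)=-\log|z_{0}|/(d\log p)$.

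Parts (i) and (ii) then follow by pulling back. Since $\zeta_{\mathcal{L}_p}=(P/Q)\circ z$ with $z$ entire and nowhere zero, the poles (respectively the zeros) of $\zeta_{\mathcal{L}_p}$ are exactly the $z$‑preimages of the poles (respectively the zeros) of the reduced rational function $P/Q$ lying in $\mathbb{C}\setminus\{0\}$; there are only finitely many of these, as $P$ and $Q$ are polynomials (and $Q(0)=1\neq0$). By the preliminary observation, each such point $z_{0}$ contributes precisely one full arithmetic progression of common difference $\mathbf{p}$, situated on a single vertical line, which is (i). Because $z$ is a local biholomorphism at every point, the order of the pole (or zero) of $\zeta_{\mathcal{L}_p}$ at $s_{0}+i\nu\mathbf{p}$ equals the order of the pole (or zero) of $P/Q$ at $z_{0}$, independently of $\nu$; this is (ii).

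For part (iii), recall (as noted just before the statement, or via the elementary geometric‐series computation $\sum_{n}(\mathrm{length}_{n})^{t}<\infty\iff\sum_{j}r_{j}^{t}<1\iff t>\delta$) that $D$ equals the similarity dimension $\delta$, the unique real solution of the Moran equation $\sum_{j=1}^{N}r_{j}^{s}=1$. Put $z_{0}:=r^{D}\in(0,1)$; since $r_{j}=r^{n_{j}'}$, the Moran equation says exactly $Q(z_{0})=0$, while $P(z_{0})=\sum_{k}z_{0}^{m_{k}'}>0$, so $z_{0}$ is a pole of $P/Q$ and $D$ is a complex dimension. Moreover $Q'(z_{0})=-\sum_{j}n_{j}'\,z_{0}^{\,n_{j}'-1}<0$, so $z_{0}$ is a \emph{simple} root of $Q$, and with $P(z_{0})\neq0$ it is a simple pole of $P/Q$; hence $D$ is a simple pole of $\zeta_{\mathcal{L}_p}$. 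For any complex dimension $\omega$ we have $Q(r^{\omega})=0$, i.e. $\sum_{j}r_{j}^{\omega}=1$, whence $1=\bigl|\sum_{j}r_{j}^{\omega}\bigr|\le\sum_{j}r_{j}^{\Re(\omega)}$; since $\sigma\mapsto\sum_{j}r_{j}^{\sigma}$ is strictly decreasing on $\mathbb{R}$ and equals $1$ at $\sigma=D$, this forces $\Re(\omega)\le D$, so $D$ lies on the rightmost vertical line and equals $\max\{\Re(\omega)\}$; and if $\omega$ is real the same strict monotonicity forces $\omega=D$. (The parenthetical assertion that there are no real zeros is immediate: for real $\sigma$, $z=r^{\sigma}>0$ and $P(z)$ is then a sum of positive terms.)

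I expect no step to require a genuine estimate; the only care needed is bookkeeping. One must apply the pull‑back argument to the \emph{fully reduced} rational function, so that possible common factors of $P$ and $Q$ are correctly accounted for — cancellation can only delete some of the finitely many candidate vertical lines, never disturb the $\mathbf{p}$‑periodicity along those that survive — and one must invoke correctly the chain $D=D_{M}=\sigma=\delta$ (Theorem~\ref{D=sigma} together with the computation above) that licenses identifying the ``dimension'' $D$ with the real Moran root. These are the points I would be most careful to write out in full.
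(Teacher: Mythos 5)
Your proof is correct and takes exactly the route the paper intends: the survey itself defers the proof to [LapLu2], but Theorem \ref{rationality} and the surrounding remarks make clear that parts (i)--(ii) are to be read off from the rationality of $\zeta_{\mathcal{L}_p}$ in $z=r^{s}$ via the covering map $s\mapsto r^{s}$ (periodic of period $i\mathbf{p}$), and part (iii) from the Moran equation $\sum_j r_j^{D}=1$ together with $D=\sigma=\delta$ and the triangle-inequality/monotonicity argument you give. The one caveat is that two distinct roots of the reduced denominator could a priori lie on the same circle $|z|=\mathrm{const}$, so what your argument literally establishes for (ii) is constancy of the multiplicity along each arithmetic progression $\{\omega_u+in\mathbf{p}\}_{n\in\mathbb{Z}}$ rather than along each geometric vertical line; this is precisely the reading of (ii) used in the rest of the paper (e.g., in Theorem \ref{residue} and Theorem \ref{5.13}), so no gap results.
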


\begin{remark} 
As will be apparent to the expert reader, the situation described above---specifically, the rationality of the zeta function in the variable $z=r^{s}$, with $r=p^{-d}$, and the ensuing periodicity of the poles and the zeros---is analogous to the one encountered for a curve (or more generally, a variety) over a finite field $\mathbb{F}_{p^d}$; see, e.g., Chapter 3 of [\ref{ParSh}]. In this analogy, the prime number $p$ naturally corresponds to the characteristic of the finite field, and $p^d=r^{-1}$ is the analog of the cardinality of the field. 
\end{remark}

\begin{remark} \label{D<1}
By Theorem \ref{D=sigma}, $D$ is also the inner Minkowski dimension $D_M$ of the self-similar set associated with the present self-similar system $\mathbf{\Phi}$:
 $D=D_M=\sigma,$
the abscissa of convergence of the geometric zeta function. Moreover, in light of (\ref{sszf}) and part (iii) of Theorem \ref{periodicity} above, it always coincides with the similarity dimension $\delta$ of 
 $\mathbf{\Phi}$. Namely, $D$ is the unique real solution of the Moran equation
 $\sum_{j=1}^N r_j^s=1$ (as in [\ref{Mor}]): 
\begin{equation}\label{Moran}
\sum_{j=1}^N r_j^{D}=1.
\end{equation}
As a result, $D=\sigma=D_M=\delta<1,$ since by assumption (see Equation (\ref{rho}) above), $\sum_{j=1}^N r_j<1$. This last observation will enable us, in particular, to apply the fractal tube formula (Theorem \ref{dtf} and Corollary \ref{ftf}) to any $p$-adic self-similar  string. 
\end{remark}

We next supplement the above results by establishing a theorem obtained in [\ref{LapLu3}] and which  will be very useful to us in \S\ref{exact tf} in order to simplify the tube formula associated with a $p$-adic self-similar string. 

According to part (i) of Theorem \ref{periodicity}, there exist finitely many poles 
\[\omega_1,\ldots,  \omega_q,\]
with $\omega_1=D$ and $\Re(\omega_q)\leq \cdots \leq \Re(\omega_2)<D$,
such that 
\[\mathcal D_{\mathcal L_p}=\{\omega_u+in\mathbf{p} ~|~ n\in \mathbb Z, ~u=1, \ldots, q\}.\]
Furthermore, each complex dimension $D+in\mathbf{p}$ is simple (by parts (ii) and (iii) of Theorem \ref{periodicity}) and the residue of $\zeta_{\mathcal L_p}(s)$ at $s=D+in\mathbf{p}$ is independent of $n\in \mathbb Z$ and equal to 
\begin{equation}\label{residue formula}
res(\zeta_{\mathcal{L}_p}; D+in\mathbf{p})=
\frac
{\sum_{k =1}^{K} r^{m_{k}'D}}
{\log{r^{-1}}\sum_{j=1}^N n_j'r^{n_j'D}}.
\end{equation}
The latter fact (concerning residues) is an immediate consequence of the following   
result from [\ref{LapLu3}]:
 
\begin{theorem}\label{residue}
(i) For each $v=1, \ldots, q,$ the principal part of the Laurent series of $\zeta_{\mathcal L_p}(s)$ at 
$s=\omega_v+in\mathbf{p}$ does not depend on $n\in \mathbb Z$. \\
(ii) Moreover, let $u\in \{1, \ldots, q\}$ be such that $\omega_u$ (and hence also $\omega_u+in\mathbf{p}$, for every $n\in \mathbb Z$, by part (ii) of Theorem \ref{periodicity} ) is simple. 
Then the residue of $\zeta_{\mathcal L_p}(s)$ at $s=\omega_u+in\mathbf{p}$ is independent of  $n\in \mathbb Z$ and 
\begin{equation}\label{residue equation}
res(\zeta_{\mathcal{L}_p}; \omega_u+in\mathbf{p})=
\frac
{\sum_{k =1}^{K} r^{m_{k}'\omega_u}}
{\log{r^{-1}}\sum_{j=1}^N n_j'r^{n_j'\omega_u}}.
\end{equation}
In particular, this is the case for $\omega_1=D.$ 
\end{theorem}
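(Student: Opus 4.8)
The plan is to reduce everything to the rational function of $z=r^{s}$ furnished by Theorem~\ref{rationality} and to exploit the fact that, locally near each point $\omega_v+in\mathbf p$, the substitution $s\mapsto z=r^{s}$ is one and the same $n$-independent change of variable. Write $P(z)=\sum_{k=1}^{K}z^{m_k'}$ and $Q(z)=1-\sum_{j=1}^{N}z^{n_j'}$, so that $\zeta_{\mathcal L_p}(s)=R(z)$ with $R:=P/Q$ a fixed rational function and $z=r^{s}=p^{-ds}$. Since $r=p^{-d}$ we have $\log r=-d\log p$, hence $r^{i\mathbf p}=e^{i\mathbf p\log r}=e^{-2\pi i}=1$, and therefore $z$ is invariant under $s\mapsto s+in\mathbf p$. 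More precisely, putting $z_v:=r^{\omega_v}$ and $\sigma:=s-(\omega_v+in\mathbf p)$, one has
\[
z-z_v=z_v\bigl(r^{\,s-\omega_v-in\mathbf p}-1\bigr)=z_v\bigl(r^{\sigma}-1\bigr),
\]
which depends on $s$ only through $\sigma$, not on $n$. Moreover $\sigma\mapsto z_v(r^{\sigma}-1)$ is a local biholomorphism at $\sigma=0$, since its derivative there is $z_v\log r\neq0$, and all of its Taylor coefficients in $\sigma$ are independent of $n$.

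For part (i), fix $v$ and expand $R$ in its Laurent series about $z=z_v$, say $R(z)=\sum_{l\ge-d_v}c_l\,(z-z_v)^{l}$ with $c_{-d_v}\neq0$; here $d_v\ge1$ is the order of the pole of $R$ at $z_v$, and it equals the multiplicity of $\omega_v$ as a pole of $\zeta_{\mathcal L_p}$ because $s\mapsto z-z_v$ vanishes simply at $s=\omega_v+in\mathbf p$ (in agreement with part (ii) of Theorem~\ref{periodicity}). Substituting $z-z_v=z_v(r^{\sigma}-1)$, which has a simple zero at $\sigma=0$, the composite $R\bigl(z_v+z_v(r^{\sigma}-1)\bigr)$ is a Laurent series in $\sigma$ whose coefficients are given by fixed formulas in the $c_l$ and in the Taylor coefficients of $z_v(r^{\sigma}-1)$ — all independent of $n$. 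In particular the coefficients of $\sigma^{-1},\dots,\sigma^{-d_v}$, i.e. the principal part of $\zeta_{\mathcal L_p}$ at $s=\omega_v+in\mathbf p$, do not depend on $n$. This proves (i).

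For part (ii), assume $\omega_u$ is simple, so $d_u=1$ and $R(z)=\frac{c_{-1}}{z-z_u}+h(z)$ with $h$ holomorphic at $z_u$ and $c_{-1}=res(R;z_u)\neq0$. Then
\[
\zeta_{\mathcal L_p}(\omega_u+in\mathbf p+\sigma)=\frac{c_{-1}}{z_u(r^{\sigma}-1)}+h(z)=\frac{c_{-1}}{z_u\log r}\,\sigma^{-1}+O(1),
\]
so $res(\zeta_{\mathcal L_p};\omega_u+in\mathbf p)=c_{-1}/(z_u\log r)$, which is already manifestly independent of $n$. To make it explicit, note that simplicity of the pole means here that $z_u$ is a simple zero of $Q$ with $P(z_u)\neq0$, so $c_{-1}=P(z_u)/Q'(z_u)$; since $Q'(z)=-\sum_{j=1}^{N}n_j'z^{n_j'-1}$ we get $Q'(z_u)=-z_u^{-1}\sum_{j=1}^{N}n_j'z_u^{n_j'}$, whence
\[
res(\zeta_{\mathcal L_p};\omega_u+in\mathbf p)=\frac{P(z_u)}{z_u\log r\cdot Q'(z_u)}=\frac{P(z_u)}{\log(r^{-1})\sum_{j=1}^{N}n_j'z_u^{n_j'}}.
\]
Substituting $z_u=r^{\omega_u}$ (so $z_u^{m_k'}=r^{m_k'\omega_u}$ and $z_u^{n_j'}=r^{n_j'\omega_u}$) gives exactly Equation~(\ref{residue equation}); the case $\omega_1=D$ is included because $D$ is a simple pole by part (iii) of Theorem~\ref{periodicity}.

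The power-series bookkeeping in the first two paragraphs is routine; the points that need care are the passage between orders of vanishing in the variable $s$ and in the variable $z$ (supplied by the local biholomorphism $s\mapsto r^{s}$) and the identity $r^{i\mathbf p}=1$, which is precisely what makes the local change of variable literally identical near every $\omega_v+in\mathbf p$ and thus forces the $n$-independence. A minor caveat worth recording is that when $K>1$ one must check that a simple pole of $\zeta_{\mathcal L_p}$ corresponds to a simple zero of $Q$ at which $P$ does not vanish, so that the displayed residue formula is not of the indeterminate form $0/0$; this is automatic in the single-generator case $K=1$, which is the one relevant to the nonarchimedean Cantor and Fibonacci strings.
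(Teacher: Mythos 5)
Your proof is correct, and it follows exactly the route the paper intends: the paper defers the proof to [LapLu3] but explicitly sets it up via Theorem \ref{rationality} (rationality of $\zeta_{\mathcal L_p}$ in $z=r^{s}$) and the identity $r^{i\mathbf p}=1$, which is precisely your argument. Your closing caveat about a possible $P$--$Q$ cancellation when $K>1$ is a legitimate and worthwhile observation about the scope of formula (\ref{residue equation}), and your verification that it cannot occur at $\omega_1=D$ is right.
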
 
 
 Note that by contrast, in the lattice case of the archimedean theory of self-similar strings developed in [\ref{L-vF2}, Chs. 2 and 3], we had to assume that the gap sizes (and not just the scaling ratios) are integral powers of $r$ in order to obtain the counterpart of Theorem \ref{residue}. 
 
\begin{remark}[Comparison with the archimedean case]\label{stronglattice}
Part (i) of Theorem \ref{periodicity}, along with Theorem \ref{rationality}, shows that in some very explicit sense, the theory of  $p$-adic self-similar  strings is both simpler and more natural than its archimedean counterpart. Indeed, not only is it the case that every $p$-adic self-similar  string $\mathcal L_p$ is lattice, but both the zeros and poles of $\zeta_{\mathcal L_p}(s)$ are periodically distributed along vertical lines, with the same period. By contrast, even if an archimedean self-similar string $\mathcal L$ is assumed to be `lattice', then the zeros of $\zeta_{\mathcal L}(s)$ are usually not periodically distributed because the multiplicative group generated by the distinct gap sizes need not be of rank one; see  [\ref{L-vF2}, Chs. 2 and 3].\end{remark}

 \subsection{Exact Tube Formulas for  $p$-Adic  Self-Similar  Strings}\label{exact tf}
   In view of Equation (\ref{sszf}), it follows from the argument given  at the beginning of [\ref{L-vF2}, \S6.4] that every $p$-adic self-similar   string $\mathcal L_p$ is strongly languid, with $\kappa=0$ and $A=r_Ng_K^{-1},$ in the notation of the latter part of Definition \ref{languid}. Indeed, Equation (\ref{sszf}) implies that 
$|\zeta_{\mathcal L_p}(s)|\ll(r_N^{-1}g_K)^{-|\Re(s)|}$, as $\Re(s)\rightarrow -\infty.$ 
Hence, we can apply the distributional  tube formula without error term (i.e., the last part of Theorem 
\ref{dtf} and of Corollary \ref{ftf}) with $W=\mathbb C$. Since by Theorem \ref{lattice}, $\mathcal L_p$ is a lattice string, we obtain (in light of Theorems \ref{rationality}, \ref{periodicity} and   \ref{residue}) the following simpler analogue of Theorem 8.25 in [\ref{L-vF2}], established in [\ref{LapLu3}]:\footnote{We note that instead, we could more generally apply parts (i) and (ii) of Theorem \ref{periodicity} in order to obtain a distributional tube formula with or without error term, valid \emph{without} assuming that all of the complex dimensions of $\mathcal L_p$ are simple. This observation is used in Remark \ref{theta}. } 

\begin{theorem}\label{5.13}
Let $\mathcal{L}_p$ be a $p$-adic self-similar   string with multiplicative generator $r$. Assume that all the complex dimensions of  $\mathcal{L}_p$ are simple. Then, for all $\varepsilon$ with $0<\varepsilon<g_Kr_N^{-1}$, the volume $V_{\mathcal L_p}(\varepsilon)$ is given by the following exact distributional tube formula:
\begin{equation}\label{ssvolume}
V_{\mathcal L_p}(\varepsilon)= \sum_{u=1}^q \varepsilon^{1-\omega_u}G_u(\log_{1/r}\varepsilon^{-1}),
\end{equation}
where $1/r=p^d$ (as in Equation (\ref{Q})), and for each $u=1, \ldots, q,~ G_u$ is a real-valued periodic function of period 1 on $\mathbb R$ corresponding to the line of complex dimensions through 
$\omega_u ~ (\omega_1=D>\Re (\omega_2) \geq \cdots \geq \Re (\omega_q)),$ and is given by the following (conditionally and also distributionally convergent) Fourier series: 

 \begin{equation}\label{Gu}
G_u(x)= \frac{res(\zeta_{\mathcal{L}_p};\omega_u)}{p}
\sum_{n\in \mathbb Z} \frac{e^{2\pi i n x}}{ 1-\omega_u-in\mathbf{p} },
\end{equation}
where \mbox{(}as in Equation (\ref{residue equation}\mbox{)} of Theorem \ref{residue}), 
 \begin{equation*}
res(\zeta_{\mathcal{L}_p}; \omega_u )=
\frac
{\sum_{k =1}^{K} r^{m_{k}'\omega_u}}
{\log{r^{-1}}\sum_{j=1}^N n_j'r^{n_j'\omega_u}}.
\end{equation*}
Moreover, $G_u$ is nonconstant and bounded.
 \end{theorem}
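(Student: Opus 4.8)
The plan is to specialize the general $p$-adic fractal tube formula (Corollary~\ref{ftf}) to the self-similar case, using the structural results of \S\ref{Periodicity}--\ref{zeros and poles} to collapse the sum over $\mathcal{D}_{\mathcal{L}_p}$ into finitely many Fourier series, and then verify the stated properties of each $G_u$. First I would invoke the observation, already recorded just before the statement, that every $p$-adic self-similar string is strongly languid with $\kappa=0$ and $A = r_N g_K^{-1}$: this follows from Equation~(\ref{sszf}), since $\sum_{k=1}^K g_k^s$ and $1-\sum_{j=1}^N r_j^s$ are finite Dirichlet polynomials, so $|\zeta_{\mathcal{L}_p}(s)| \ll (r_N^{-1} g_K)^{-|\Re(s)|}$ as $\Re(s) \to -\infty$. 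Hence Corollary~\ref{ftf} applies with $W = \mathbb{C}$ and $\mathcal{R}_p(\varepsilon) \equiv 0$, provided $0 < \varepsilon < A^{-1} = g_K r_N^{-1}$, and (using the hypothesis that all complex dimensions are simple) we get
\begin{equation*}
V_{\mathcal{L}_p}(\varepsilon) = \sum_{\omega \in \mathcal{D}_{\mathcal{L}_p}} \frac{p^{-1}\,res(\zeta_{\mathcal{L}_p};\omega)}{1-\omega}\,\varepsilon^{1-\omega}.
\end{equation*}

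Next I would reorganize this sum using part~(i) of Theorem~\ref{periodicity}: the complex dimensions lie on finitely many vertical lines $\Re(s) = \Re(\omega_u)$, $u = 1,\ldots,q$, each line being a full arithmetic progression $\{\omega_u + in\mathbf{p} : n \in \mathbb{Z}\}$. Grouping the terms accordingly gives
\begin{equation*}
V_{\mathcal{L}_p}(\varepsilon) = \sum_{u=1}^q \varepsilon^{1-\omega_u} \left( \frac{1}{p} \sum_{n \in \mathbb{Z}} \frac{res(\zeta_{\mathcal{L}_p}; \omega_u + in\mathbf{p})}{1-\omega_u - in\mathbf{p}}\, \varepsilon^{-in\mathbf{p}} \right),
\end{equation*}
where I use that $\varepsilon^{1-(\omega_u+in\mathbf{p})} = \varepsilon^{1-\omega_u}\varepsilon^{-in\mathbf{p}}$. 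By Theorem~\ref{residue}(ii), $res(\zeta_{\mathcal{L}_p}; \omega_u + in\mathbf{p})$ is independent of $n$ and equals the closed-form expression in~(\ref{residue equation}); pulling this constant out of the $n$-sum and writing $\varepsilon^{-in\mathbf{p}} = e^{2\pi i n \log_{1/r}\varepsilon^{-1}}$ (which holds because $\mathbf{p} = 2\pi/(d\log p)$ and $1/r = p^d$, so $\varepsilon^{-in\mathbf{p}} = e^{-in\mathbf{p}\log\varepsilon} = e^{in\mathbf{p}\log\varepsilon^{-1}} = e^{2\pi i n \log_{p^d}\varepsilon^{-1}}$) identifies the inner sum as $G_u(\log_{1/r}\varepsilon^{-1})$ with $G_u$ as in~(\ref{Gu}). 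This yields~(\ref{ssvolume}). The convergence claims — that the Fourier series for $G_u$ converges conditionally and distributionally — follow as in the archimedean treatment of [\ref{L-vF2}, Ch.~8]: the coefficients decay like $O(1/|n|)$, so the series is the Fourier series of a function of bounded variation, hence conditionally convergent pointwise and convergent in the sense of distributions.

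It remains to check that each $G_u$ is real-valued, nonconstant, and bounded. Boundedness is immediate from the $O(1/|n|)$ decay together with the bounded-variation/Dirichlet-test argument (or one can note $G_u$ is essentially a dilated sawtooth-type function, explicitly a combination of $\{x\}$-type terms as in the Fourier identity~(\ref{fourier})). Real-valuedness follows because $V_{\mathcal{L}_p}(\varepsilon)$ is manifestly real and, more directly, because complex dimensions come in conjugate pairs $\omega_u \pm in\mathbf{p}$ (as $\zeta_{\mathcal{L}_p}$ has real Dirichlet coefficients) with conjugate residues, so the $n$ and $-n$ terms in~(\ref{Gu}) are complex conjugates; for $u=1$ one has $\omega_1 = D$ real, and pairing $n \leftrightarrow -n$ shows $G_1(x) \in \mathbb{R}$, while for $u \geq 2$ one pairs the line through $\omega_u$ with the line through $\overline{\omega_u}$. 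The main obstacle I anticipate is proving $G_u$ is \emph{nonconstant}: one must exhibit at least one nonzero Fourier coefficient with $n \neq 0$, i.e. show $res(\zeta_{\mathcal{L}_p};\omega_u) \neq 0$. For $u=1$ this is clear since $D$ is a genuine simple pole with residue given by~(\ref{residue formula}), whose numerator and denominator are sums of positive terms, hence strictly positive. For $u \geq 2$ one must argue that $\omega_u$ is an actual pole (not cancelled by a zero of the numerator $\sum_k z^{m_k'}$), which is exactly the content of $\omega_u \in \mathcal{D}_{\mathcal{L}_p}$ together with simplicity; then $res(\zeta_{\mathcal{L}_p};\omega_u) \neq 0$, so the $n$-th coefficient is nonzero for every $n$, and in particular $G_u$ is nonconstant. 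I would close by remarking that, for the concrete examples (Cantor, Euler, Fibonacci strings), the direct computations in Examples~\ref{cantor volume} and~\ref{euler volume} recover~(\ref{ssvolume}) and show the formula in fact holds pointwise, consistent with Theorem~\ref{5.13}.
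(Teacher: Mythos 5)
Your proposal is correct and follows essentially the same route as the paper: establish strong languidity from Equation (\ref{sszf}), apply the exact case of Theorem \ref{dtf}/Corollary \ref{ftf} with $W=\mathbb{C}$, and then regroup the sum over $\mathcal{D}_{\mathcal{L}_p}$ line by line using Theorems \ref{periodicity} and \ref{residue} together with the identity $\varepsilon^{-in\mathbf{p}}=e^{2\pi i n\log_{1/r}\varepsilon^{-1}}$. Your additional remarks on real-valuedness (via conjugate pairing) and nonconstancy (via nonvanishing of the residues at genuine simple poles) correctly fill in details the survey leaves implicit.
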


\begin{remark}
In comparing our results with the corresponding results in Chapter 2 and \S8.4 of [\ref{L-vF2}], obtained for real self-similar fractal strings, the reader should keep in mind the following two facts: (i) the simplification brought upon by the ``strong lattice property'' of $p$-adic self-similar   strings; see Theorem \ref{residue} and Remark \ref{stronglattice} above. 
(ii) By construction, any $p$-adic self-similar  string $\mathcal L_p$ (as defined in this section) has total length $L$ equal to one: $L=\mu_H(\mathcal L_p)=\zeta_{\mathcal L_p}(1)=\mu_H(\mathbb Z_p)=1.$ Indeed, for notational simplicity, we have assumed that the similarity transformations 
$\Phi_j$ $(j=1, \dots, N)$ are self-maps of the `unit interval' $\mathbb Z_p$, rather than of an arbitrary `interval' of length $L$ in $\mathbb Q_p$. 
\end{remark}

\begin{remark}[Truncated tube formula with error term]\label{theta}
The   analog of Corollary 8.27 in [\ref{L-vF2}]  holds in the present context, with $2\varepsilon$ replaced by $\varepsilon$  and with $L:=1;$  see the previous remark.
In particular, in light of the method of proof of \emph{loc. cit.,} we have the following `truncated tube formula':
\begin{equation}\label{4.25}
V(\varepsilon)=\varepsilon^{1-D}G(\log_{1/r}\varepsilon^{-1}) + E(\varepsilon), 
\end{equation}
where $G=G_1$ is the nonconstant, bounded periodic function of period 1 given by Equation (\ref{Gu}) of Theorem \ref{5.13} (with $u=1$ and $\omega_1=D$). Here, $E(\varepsilon)$ is an error term that can be estimated much as in \emph{loc. cit.}  In particular,   $E(\varepsilon)=o(1)$ and, moreover, there exists $\delta>0$ such that $\varepsilon^{-(1-D)}E(\varepsilon)=O(\varepsilon^{\delta}),$  as $\varepsilon\rightarrow 0^+.$

Furthermore, since we limit ourselves here to the first line of complex dimensions, and since those complex dimensions are always simple (by part (iii) of Theorem \ref{periodicity}), we do \emph{not}
have to assume (as in Theorem \ref{5.13}) that all the complex dimensions of $\mathcal L_p$ are simple in order for Equation (\ref{4.25}) and the corresponding error estimate for $E(\varepsilon)$ to be valid. (This latter fact can be used to give a direct  proof of the equality  
$D=D_{M}=\sigma$ for any nontrivial $p$-adic self-similar string.) 

More specifically, we note that Equation (\ref{4.25}) and the corresponding error estimate for 
$E(\varepsilon)$ follow from part (i) of Theorem \ref{dtf} (the explicit tube formula with error term, applied to a suitable window), along with the fact that the complex dimensions on the rightmost vertical line $\Re(s)=D$ are simple (according to parts (ii) and (iii) of Theorem \ref{periodicity}, from [\ref{LapLu2}]).
\end{remark}

\begin{remark}
Note that in light of Remark \ref{D<1}, we have $D=\sigma<1$ for any nontrivial $p$-adic self-similar   string $\mathcal L_p$. Hence, we can also apply the distributional tube formula in the general case (when the complex dimensions of $\mathcal L_p$ are not necessarily simple) or, in the present special case of simple complex dimensions (Corollary \ref{ftf}) to obtain a distributional tube formula in this situation, as claimed in Theorem \ref{5.13}. \end{remark}

The next result follows immediately from the truncated tube formula provided in Remark \ref{theta}, along with the corresponding error estimate. 

\begin{theorem}\label{Minkowski nonmeasurable}
A  $p$-adic self-similar string is never Minkowski measurable because it always has multiplicatively periodic oscillations of order $D$ in its geometry. 
\end{theorem}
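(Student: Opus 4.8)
The plan is to deduce the statement directly from the truncated tube formula established in Remark~\ref{theta}, namely
\begin{equation}\label{prooftube}
V_{\mathcal L_p}(\varepsilon)=\varepsilon^{1-D}G(\log_{1/r}\varepsilon^{-1})+E(\varepsilon),
\end{equation}
where $G=G_1$ is the nonconstant, bounded periodic function of period $1$ on $\mathbb R$ given by Equation~(\ref{Gu}) (with $u=1$, $\omega_1=D$), and where the error term satisfies $\varepsilon^{-(1-D)}E(\varepsilon)=O(\varepsilon^{\delta})$ for some $\delta>0$ as $\varepsilon\to 0^+$. Recall from Remark~\ref{theta} that~(\ref{prooftube}) is available for \emph{any} nontrivial $p$-adic self-similar string, since it involves only the rightmost line of complex dimensions $\Re(s)=D$, on which all complex dimensions are simple by parts~(ii) and~(iii) of Theorem~\ref{periodicity}; thus we need \emph{not} assume that all complex dimensions of $\mathcal L_p$ are simple.

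First I would multiply~(\ref{prooftube}) by $\varepsilon^{-(1-D)}$ to obtain
\begin{equation}\label{normalizedtube}
\varepsilon^{-(1-D)}V_{\mathcal L_p}(\varepsilon)=G(\log_{1/r}\varepsilon^{-1})+O(\varepsilon^{\delta}),\qquad \varepsilon\to 0^+.
\end{equation}
Since $\mathcal L_p$ is nontrivial, it has $N\geq 2$ scaling ratios and at least one generator, so $G$ is a genuinely nonconstant periodic function; the residue $res(\zeta_{\mathcal L_p};D)$ appearing in~(\ref{Gu}) is a strictly positive real number (the numerator and denominator in the formula of Theorem~\ref{residue} are sums of positive terms), and the Fourier coefficients $(1-D-in\mathbf p)^{-1}$ are not all zero, which is exactly why $G$ is not constant. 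Because $G$ is continuous (its Fourier series converges, as noted in Theorem~\ref{5.13}) and periodic of period $1$, it attains a strict maximum value $M$ and a strict minimum value $m$ with $m<M$. Consequently, one can choose two sequences $\varepsilon_k\to 0^+$ and $\varepsilon_k'\to 0^+$ along which $\log_{1/r}\varepsilon_k^{-1}\to$ a maximizer of $G$ (mod $1$) and $\log_{1/r}(\varepsilon_k')^{-1}\to$ a minimizer of $G$ (mod $1$); feeding these into~(\ref{normalizedtube}) and using that the error term tends to $0$ shows
\[
\limsup_{\varepsilon\to 0^+}\varepsilon^{-(1-D)}V_{\mathcal L_p}(\varepsilon)=M>m=\liminf_{\varepsilon\to 0^+}\varepsilon^{-(1-D)}V_{\mathcal L_p}(\varepsilon).
\]
Hence the limit defining the Minkowski content (Definition in \S\ref{Mdimension}) does not exist in $(0,\infty)$, so $\mathcal L_p$ is not Minkowski measurable.

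The main obstacle, and the only point requiring genuine care, is establishing rigorously that $G$ is \emph{nonconstant}: one must rule out the degenerate possibility that the oscillatory contributions cancel. This is handled by observing that $G$ nonconstant is equivalent to the existence of some $n\neq 0$ with nonzero $n$-th Fourier coefficient, which here is manifest since $res(\zeta_{\mathcal L_p};D)\neq 0$ and $(1-D-in\mathbf p)^{-1}\neq 0$ for all $n$; this uses the strong lattice property (Theorem~\ref{rationality}), which guarantees $\mathbf p=2\pi/(d\log p)>0$ is well defined and that there genuinely are nonreal complex dimensions $D+in\mathbf p$ on the line $\Re(s)=D$. A secondary technical point is the interchange of the limit with the evaluation of $G$ along the chosen sequences, which is justified by the uniform decay $O(\varepsilon^\delta)$ of the error term together with the continuity of $G$. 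Both points are routine given the results already assembled in \S\ref{zeros and poles} and Remark~\ref{theta}, so the theorem follows.
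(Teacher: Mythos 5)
Your overall strategy coincides with the paper's: Theorem \ref{Minkowski nonmeasurable} is asserted there to follow immediately from the truncated tube formula (\ref{4.25}) of Remark \ref{theta} together with its error estimate, and your write-up fills in that deduction, including the correct (and important) observation that only the rightmost line of complex dimensions is involved, on which the poles are always simple, so no global simplicity hypothesis is needed. Your identification of the crux --- that $G=G_1$ is genuinely nonconstant because $res(\zeta_{\mathcal L_p};D)>0$ and every Fourier coefficient $(1-D-in\mathbf p)^{-1}$ is nonzero --- is also correct and is exactly what makes the ``oscillations of order $D$'' nontrivial.

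One step of your execution is wrong as written, though easily repaired: you claim that $G$ is continuous and therefore attains a strict maximum and minimum. In fact $G$ is generically \emph{discontinuous}; it is a multiplicatively periodic step-type function (for the $3$-adic Cantor string one computes $\varepsilon^{-(1-D)}V_{\mathcal{CS}_3}(\varepsilon)=\tfrac13(3/2)^{\{x\}}$ with $x=\log_3\varepsilon^{-1}$, which jumps at integer $x$), consistent with Remark \ref{real vs. p-adic volume}, which notes that $V_{\mathcal L_p}(\varepsilon)$ is a step function with jump discontinuities. Conditional pointwise convergence of the Fourier series in (\ref{Gu}) does not give continuity of the sum, so ``$G$ attains a strict maximum $M$ and minimum $m$'' is unjustified. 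The fix is standard: since $G$ is nonconstant, pick $x_0,x_1\in[0,1)$ with $G(x_0)\neq G(x_1)$ and set $\varepsilon_k:=r^{\,k+x_0}$ and $\varepsilon_k':=r^{\,k+x_1}$, so that $\log_{1/r}\varepsilon_k^{-1}$ has constant fractional part and $G(\log_{1/r}\varepsilon_k^{-1})=G(x_0)$ exactly for every $k$ (similarly for $x_1$). Feeding these into your normalized formula and using $E(\varepsilon)=O(\varepsilon^{1-D+\delta})$ produces two distinct subsequential limits of $\varepsilon^{-(1-D)}V_{\mathcal L_p}(\varepsilon)$, which is all that is needed to conclude that the Minkowski content does not exist. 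With this adjustment your argument is complete and matches the paper's intended proof.
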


\begin{example}[Nonarchimedean Fibonacci string and its fractal tube formula] \label{fibonacci}
Let $\Phi_1~ \mbox{and}~  \Phi_2$ be the two affine similarity contraction mappings of $\mathbb {Z}_2$ given by 
\begin{equation*}
\Phi_1(x)=2x \quad \mbox{and} \quad \Phi_2(x)=1+4x,
\end{equation*}
with the respective scaling ratios $r_1=1/2$ and $r_2=1/4$. 
The associated $2$-adic self-similar string (introduced in [\ref{LapLu2}]) with generator $G=3+4\mathbb Z_2$ is called the 
\emph{nonarchimedean} \emph{Fibonacci string} and  denoted by $\mathcal{FS}_2$ (compare with the archimedean counterpart discussed in [\ref{L-vF2}, \S 2.3.2]). 
It is given by the sequence
$\mathcal{FS}_2 =l_1, l_2, l_3, \dots$ and consists (for $m=1,2, \ldots$) of intervals of lengths 
$l_m=2^{-(m+1)}$ with multiplicities $f_{m},$ the Fibonacci numbers.\footnote{These numbers are defined by the recursive formula: $f_{m+1}=f_m+f_{m-1}, f_0=0$ and $f_1=1$.}
Alternatively, the nonarchimedean Fibonacci string is the bounded open subset of $\mathbb Z_2$  given by the following disjoint union of 2-adic intervals (necessarily its 2-adic convex components): 
\[\mathcal{FS}_2=(3+4\mathbb{Z}_2)\cup (6+8\mathbb{Z}_2)\cup (12+16\mathbb{Z}_2)\cup (13+
16\mathbb{Z}_2) \cup \cdots.\]
By Theorem \ref{geometric zeta function}, the geometric zeta function of $\mathcal{FS}_2$ is given (almost exactly as for the archimedean Fibonacci string, cf. \emph{loc. cit.}) by \footnote{The minor difference between the two geometric zeta functions is due to the fact that the real Fibonacci string $\mathcal{FS}$ in [\ref{L-vF2}, \S2.3.2 and Exple. 8.32] has total length 4 whereas the present 2-adic Fibonacci string $\mathcal{FS}_2$
has total length 1.}
 \begin{equation}\label{gzff}
\zeta_{\mathcal{FS}_2}(s)
=\frac{4^{-s}}{1-2^{-s}-4^{-s}}. 
\end{equation}
Hence, the set of complex dimensions  of $\mathcal {FS}_2$ is given by
\begin{equation}\label{cdfs2}
\mathcal{D}_{\mathcal {FS}_2}=\{   D+i n \mathbf{p}~|~n \in \mathbb{Z} \}\cup 
\{  -D+i (n+1/2) \mathbf{p}~|~n \in \mathbb{Z}\}
\end{equation}
with $D=\log_2 \phi$, where $\phi=(1+\sqrt5)/2$ is the golden ratio,  and $\mathbf{p}=2\pi / \log2,$ the oscillatory period of $\mathcal{FS}_2$; see Figure 11.
We refer the interested reader to [\ref{LapLu2}] for additional information concerning the nonarchimedean Fibonacci string. 
\begin{figure}[h]
\psfrag{p}{$\mathbf{p}$}
\psfrag{.5p}{$\frac{1}{2}\mathbf{p}$}
\psfrag{-1}{$-1$}
\psfrag{-D}{$-D$}
\psfrag{0}{$0$}
\psfrag{D}{$D$}
\psfrag{1}{$1$}
\raisebox{-1cm}
{\psfig{figure=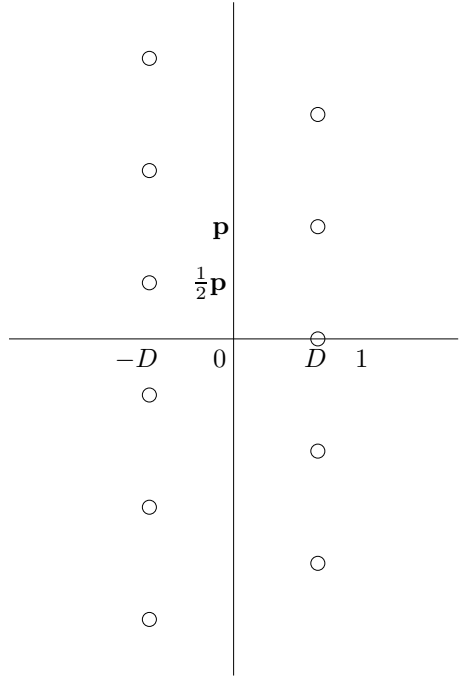, height=9 cm}}
\caption{The complex dimensions of the nonarchimedean Fibonacci string $\mathcal{FS}_2$. Here, $D=\log_2 \phi$ and $\mathbf{p}=2\pi / \log2.$}
\end{figure}

Note that $\zeta_{\mathcal{FS}_2}$ does not have any zero (in the variable $s$) since the equation $4^{-s}=0$ does not have any solution.
Moreover, in agreement with Theorem \ref{rationality}, $\zeta_{\mathcal{FS}_2}$ is a rational function of $z=2^{-s},$ i.e.,  
\begin{equation}\label{zfs2}
\zeta_{\mathcal{FS}_2}(s)=\frac{z^2}{1-z-z^2}. 
\end{equation}
Since, in light of  (\ref{zfs2}), 
 the complex dimensions of $\mathcal{FS}_2$ are simple, we may apply either
  Corollary \ref{ftf} or 
Theorem \ref{5.13}  in order to obtain the following exact fractal tube formula 
for the nonarchimedean Fibonacci string: \footnote{We leave it as an excercise for the interested reader to verify the  computations below or to obtain a direct derivation of $V_{\mathcal {FS}_2}(\varepsilon)$, much as we have done for $V_{\mathcal{E}_p}(\varepsilon)$ and $V_{\mathcal{CS}_3}(\varepsilon)$ in Examples \ref{euler volume} and  \ref{cantor volume}, respectively.}
  \begin{eqnarray*}
V_{\mathcal {FS}_2}(\varepsilon)&=& \frac{1}{2}
 \sum_{\omega\in \mathcal D_{\mathcal{FS}_2}}res(\zeta_{\mathcal {FS}_2};\omega)
  \frac{\varepsilon^{1-\omega}}{1-\omega}\\
   &=&
   \varepsilon^{1-D}G_1(\log_2\varepsilon^{-1})+
      \varepsilon^{1+D-i\mathbf{p}/2}G_2(\log_2\varepsilon^{-1}), 
                          \end{eqnarray*}
                where $G_1$ and $G_2$ are bounded periodic functions of period 1 on $\mathbb R$ given by their respective (conditionally convergent) Fourier series
                \begin{equation}
                G_1(x)= \phi^{-2}\frac{\phi + 2}{10\log 2}
 \sum_{n\in \mathbb Z}
   \frac{e^{2\pi i n x}}{1-D-in\mathbf p} 
                   \end{equation}
                   and 
                    \begin{equation}
                G_2(x)=   \phi^2\frac{3-\phi}{10\log 2}
                 \sum_{n\in \mathbb Z}
   \frac{e^{2\pi i n x}}
  {1+D-i(n+1/2)\mathbf p}.
                     \end{equation}
                     Note that the above Fourier series for $G_1$ and $G_2$ are conditionally (and also distributionally) convergent, for all $x\in\mathbb R.$ Furthermore, the explicit fractal tube formula $V_{\mathcal {FS}_2}(\varepsilon)$ for 
                     $\mathcal{FS}_2$ actually holds pointwise and not just distributionally, as the interested reader may verify via a direct computation (much as in Example \ref{cantor volume} for the 3-adic Cantor string $\mathcal{CS}_3$).
                     \end{example}

 \subsection{The Average Minkowski Content}\label{amc}  
According to Theorem \ref{Minkowski nonmeasurable}, a $p$-adic  self-similar string does not have a well-defined Minkowski content, because it is not Minkowski measurable. Nevertheless, as we shall see in Theorem \ref{average content} below, it does have a suitable `average content' $\mathcal{M}_{av},$ in the following sense:

\begin{definition}\label{defcontent}
Let $\mathcal L_p$ be a $p$-adic fractal string of dimension $D$. The \emph{average Minkowski content}, $\mathcal{M}_{av}$, is defined by the logarithmic Cesaro average
\[
\mathcal{M}_{av}=\mathcal{M}_{av}(\mathcal L_p):=\lim_{T\rightarrow \infty}\frac{1}{\log T}\int_{1/T}^1\varepsilon^{-(1-D)}V_{\mathcal L_p}(\varepsilon) 
\frac{d\varepsilon}{\varepsilon},
\]
provided this limit exists and is a finite positive real number. 
\end{definition}

\begin{theorem}\label{average content} 
Let $\mathcal L_p$ be a $p$-adic self-similar  string of dimension $D$. Then the average Minkowski content of  $\mathcal {L}_p$ exists and is given by the finite positive number
\begin{equation}\label{content}
\mathcal{M}_{av}=\frac{1}{p(1-D)} res(\zeta_{\mathcal L_p}; D)=\frac{1}{p(1-D)} \frac
{\sum_{k =1}^{K} r^{m_{k}'D}}
{\log{r^{-1}}\sum_{j=1}^N n_j'r^{n_j'D}}.
\end{equation} 
\end{theorem}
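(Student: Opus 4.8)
The plan is to compute the Cesàro average directly from the exact tube formula for $p$-adic self-similar strings (Theorem \ref{5.13}), exploiting the fact that only the leading line of complex dimensions (through $\omega_1=D$) contributes in the limit. First I would recall from Remark \ref{theta} the truncated tube formula
\[
V_{\mathcal L_p}(\varepsilon)=\varepsilon^{1-D}G(\log_{1/r}\varepsilon^{-1})+E(\varepsilon),
\]
valid for all sufficiently small $\varepsilon>0$, where $G=G_1$ is the bounded, nonconstant periodic function of period $1$ given by (\ref{Gu}) with $u=1$, and where the error satisfies $\varepsilon^{-(1-D)}E(\varepsilon)=O(\varepsilon^{\delta})$ as $\varepsilon\to 0^+$ for some $\delta>0$. (Using the truncated formula rather than (\ref{ssvolume}) avoids having to assume that all complex dimensions are simple, since the complex dimensions on the line $\Re(s)=D$ are automatically simple by part (iii) of Theorem \ref{periodicity}.) Hence
\[
\varepsilon^{-(1-D)}V_{\mathcal L_p}(\varepsilon)=G(\log_{1/r}\varepsilon^{-1})+O(\varepsilon^{\delta}).
\]

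Next I would insert this into the definition of $\mathcal M_{av}$. The $O(\varepsilon^{\delta})$ term contributes $\frac{1}{\log T}\int_{1/T}^{1}O(\varepsilon^{\delta})\,\frac{d\varepsilon}{\varepsilon}=O(1/\log T)\to 0$, so it is harmless. For the main term, I would substitute $x=\log_{1/r}\varepsilon^{-1}$, so that $\frac{d\varepsilon}{\varepsilon}=-(\log r^{-1})\,dx$ and the interval $\varepsilon\in[1/T,1]$ becomes $x\in[0,\log_{1/r}T]$; thus
\[
\frac{1}{\log T}\int_{1/T}^{1}G(\log_{1/r}\varepsilon^{-1})\frac{d\varepsilon}{\varepsilon}
=\frac{\log r^{-1}}{\log T}\int_{0}^{\log_{1/r}T}G(x)\,dx
=\frac{1}{\log_{1/r}T}\int_{0}^{\log_{1/r}T}G(x)\,dx.
\]
Since $G$ is periodic of period $1$ and integrable (indeed bounded), the classical averaging fact gives
\[
\lim_{T\to\infty}\frac{1}{\log_{1/r}T}\int_{0}^{\log_{1/r}T}G(x)\,dx=\int_0^1 G(x)\,dx,
\]
which is the Cesàro (equidistribution) mean of a periodic function. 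Finally, from the Fourier series (\ref{Gu}) only the $n=0$ term survives integration over a period, so
\[
\int_0^1 G(x)\,dx=\frac{res(\zeta_{\mathcal L_p};D)}{p}\cdot\frac{1}{1-D},
\]
and inserting the explicit residue formula (\ref{residue equation}) with $\omega_u=D$ (equivalently (\ref{residue formula})) yields exactly (\ref{content}). Positivity and finiteness follow because $res(\zeta_{\mathcal L_p};D)>0$ (all $r^{m_k'D}$ and $n_j'r^{n_j'D}$ are positive) and $0<D<1$ by Remark \ref{D<1}.

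The main technical point to be careful about is the interchange of the Cesàro limit with the Fourier expansion of $G$: one should not integrate the series term by term without justification. The clean way is to avoid that altogether and use only $\int_0^1 G(x)\,dx = \widehat G(0)$, i.e. compute the zeroth Fourier coefficient of $G$ directly from (\ref{Gu}) — this is legitimate because the Fourier series in (\ref{Gu}) converges to $G$ in the sense of distributions (and conditionally pointwise), so pairing it against the constant test function $\mathbf 1_{[0,1]}$ picks out precisely the $n=0$ coefficient. A secondary point is ensuring the lower limit of integration in Definition \ref{defcontent} causes no trouble: the truncated formula is only asserted for small $\varepsilon$, but the contribution of any fixed interval $\varepsilon\in[\varepsilon_0,1]$ to $\frac{1}{\log T}\int_{1/T}^1(\cdots)\frac{d\varepsilon}{\varepsilon}$ is $O(1/\log T)\to 0$, so the behavior near $\varepsilon=1$ is irrelevant to the limit. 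With these two observations in place the computation is routine.
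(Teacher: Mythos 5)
Your proposal is correct and follows essentially the same route the paper indicates: the remark immediately after Theorem \ref{average content} states that the result follows from the truncated tube formula (\ref{4.25}) of Remark \ref{theta} together with its error estimate, which is exactly your argument (Cesàro averaging of the periodic function $G=G_1$ plus the $O(\varepsilon^{\delta})$ error, with the zeroth Fourier coefficient producing $res(\zeta_{\mathcal L_p};D)/\bigl(p(1-D)\bigr)$). Your added care about extracting $\int_0^1 G$ via the distributional pairing rather than term-by-term integration, and about the irrelevance of the behavior near $\varepsilon=1$, are sensible refinements of the same computation.
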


\begin{remark}
Definition \ref{defcontent} and Theorem \ref{average content} are the exact nonarchimedean counterpart of [\ref{L-vF2}], Definition 8.29 and Theorem 8.30. Furthermore, Theorem \ref{average content} (which is obtained in [\ref{LapLu3}]) follows 
from the `truncated explicit tube formula' given by Equation (\ref{4.25}) in Remark \ref{theta}, along with the corresponding error estimate. 
\end{remark}
\begin{example}[Nonarchimedean Cantor string]\label{average content CS3}
The average Minkowski content of the nonarchimedean Cantor string $\mathcal{CS}_3$ is given by 
\[
\mathcal M_{av}(\mathcal{CS}_3)=\frac{1}{6(\log3-\log2)}.
\]
Indeed, we have seen in Examples \ref{cantor} and \ref{cantor volume}  that $ D=\log_32,  ~res(\zeta_{\mathcal {CS}_3}; D)=1/2\log3$ and $p=3$.
\end{example}

\begin{example}[Nonarchimedean Fibonacci string]\label{amcf}
The average Minkowski content of the nonarchimedean Fibonacci string $\mathcal{FS}_2$ is given by 
\[
\mathcal M_{av}(\mathcal{FS}_2)=\frac{1}{2(\phi +2)(\log2- \log \phi)},
\]
where $\phi=\frac{1+ \sqrt 5}{2}$ is the golden ratio (and so $\phi^{-1}=\frac{\sqrt 5-1}{2}$).  
Indeed, with $p=2$ and   $D=\log_2 \phi $, one can verify that 

\[res(\zeta_{\mathcal {FS}_2}; D)=\frac{1}{(\phi+2)\log2}.\]

Hence, the above expression for $\mathcal M_{av}=\mathcal M_{av}(\mathcal{FS}_2)$ 
follows from Theorem \ref{average content}. Furthermore, note that 
$\log2- \log \phi=\log(\sqrt 5-1).$
Hence, $\mathcal M_{av}$ can be rewritten as follows:
 \[
\mathcal M_{av}=\frac{1}{(5+\sqrt{5})\log(\sqrt 5-1)}.
\]
\end{example}

\begin{remark}
Even though the $p$-adic Euler string $\mathcal E_p$ is not self-similar, we can still use the same methods as those used to prove Theorem \ref{average content}, along with the fractal tube formula (\ref{VolumeEuler}),  in order to calculate its average Minkowski content; we leave the easy verification to the reader. 
Clearly, $\mathcal E_p$ is not Minkowski measurable since, in light of Equation (\ref{cdes}), it has nonreal complex dimension on the real line $\Re(s)=D=0.$
This is also apparent from the fractal tube formula (\ref{VolumeEuler}).
\end{remark}

\section{Concluding Comments}\label{conclusion}  
We close this paper with some comments regarding several possible directions for future research in this area. We hope to address these issues in later work.

\subsection{Ad\`elic Fractal Strings and Their Spectra} \label{adelic}
It would be interesting to unify the archimedean and nonarchimedean settings by appropriately defining \emph{ad\`elic} fractal strings, and then studying the associated spectral zeta functions (as is done for standard archimedean fractal strings in [\ref{Lap1}--\ref{Lap2}] and  [\ref{LapMa},  \ref{LapPo}, \ref{LapPo3}, \ref{L-vF1}, \ref{L-vF2}]). To this aim, the spectrum of these ad\`elic fractal strings should be suitably defined and its study may benefit from Dragovich's work  [\ref{Drag}] on ad\`elic quantum harmonic oscillators. In the process of defining these ad\`elic fractal strings, we expect to make contact with the notion of a fractal membrane (or ``quantized fractal string'') introduced in [\ref{Lap2}, Ch. 3] and rigorously constructed in [\ref{LapNe}] as a Connes-type noncommutative geometric space; see also [\ref{Lap2}, \S 4.2]. The aforementioned spectral zeta function of an ad\`elic fractal string would then be viewed as the (completed) spectral partition function of the associated fractal membrane, in the sense of [\ref{Lap2}]. (See also Remark \ref{harmonic string} and \S\ref{towards}.)

\subsection{Nonarchimedean Fractal Strings in Berkovich Space} \label{berkovich}
    As we have seen in \S3.2, there can only exist \emph{lattice} $p$-adic  self-similar strings, because of the discreteness of the valuation group of $\mathbb Q_p$. However, in the archimedean setting, there are both lattice and nonlattice self-similar strings. We expect that by suitably extending the notion of  
    $p$-adic self-similar  string to Berkovich's $p$-adic analytic space [\ref{Ber}, \ref{Duc}], it can be shown that $p$-adic  self-similar strings are generically nonlattice in this broader setting.  Furthermore, we conjecture that every nonlattice string in the Berkovich projective line  can be approximated by lattice strings with increasingly large oscillatory periods (much as occurs in the archimedean case [\ref{L-vF2}, Ch. 3]).
Finally, we expect that, by constrast with what happens for $p$-adic fractal strings, the volume 
$V_{\mathcal L_p}(\varepsilon)$ will be a continuous function of $\varepsilon$ in this context. (Compare with Remark \ref{real vs. p-adic volume}.)

\subsection{Higher-Dimensional Fractal Tube Formula} \label{higher}
We expect that the higher-dimensional tube formulas obtained by Lapidus and Pearse in [\ref{LapPe1}, \ref{LapPe2}] (as well as, more generally, by those same authors and Winter in [\ref{LPW}]) for archimedean self-similar systems and the associated tilings [\ref{Pe}] in $\mathbb R^d$ have a natural nonarchimedean counterpart in the $d$-dimensional $p$-adic space $\mathbb Q_p^d,$ for any integer $d\geq 1$. In the latter $p$-adic case, the corresponding `tubular zeta function'
 $\zeta_{\mathcal T_p}(\varepsilon; s)$ (when $d=1$, see Remark \ref{tzf}) should have a more complicated expression than in the one-dimensional situation, and should involve both the inner radii and the `curvature' of the generators (see [\ref{LapPe1}--\ref{LPW}] for the archimedean case.) of the tiling (or $p$-adic fractal spray) $\mathcal T_p.$ Moreover, by analogy with what is expected to happen in the Euclidean case  [\ref{LapPe1}--\ref{LPW}], the coefficients of the resulting higher-dimensional tube formula should have an appropriate interpretation in terms of yet to be suitably `nonarchimedean fractal curvatures' associated with each complex and integral dimension of $\mathcal T_p$.  Finally, by analogy with the archimedean case (for $d\geq1$, see [\ref{LapPe1}] and [\ref{LPW}]), the $p$-adic higher-dimensional fractal tube formula should take the same form as in 
 Equation (\ref{sum residue}), except with $\zeta_{\mathcal L_p}(\varepsilon; s)$ given by a different expression from the one in (\ref{tubular zeta}) where $d=1$, and with $\mathcal D_{\mathcal L_p}(W)$ replaced by 
 $ \mathcal D_{\mathcal L_p}(W)\cup \{0, 1, \ldots, d\}$, as well as (for nonarchimedean self-similar tilings) with $W=\mathbb C$ and $\mathcal R_p(\varepsilon)\equiv 0$ in the counterpart of Equation (\ref{tubular zeta}) or (\ref{real tubular zeta}). 
 In the future, we plan to investigate the above problems along with related question pertaining to fractal geometry and geometric measure theory in nonarchimedean spaces. 

\subsection{Towards Nonarchimedean Bergman Spaces and Toeplitz Algebras}\label{towards}
We close this discussion by pointing out a long-term problem that involves challenging and seemingly wide open questions in nonarchimedean harmonic and functional analysis. 

In [\ref{Lap2}], fractal membranes were introduced as suitable quantized analogues of fractal strings. They were viewed heuristically as infinite, ad\`elic noncommutative tori but were also proposed to be properly defined as noncommutative spaces (in the sense of Connes, [\ref{Con}]).

A rigorous construction of archimedean fractal membranes is provided by Lapidus and Nest in 
[\ref{LapNe}]; see also [\ref{Lap2}, \S4.2.1]. It involves, in particular, Toeplitz operators acting on Bergman spaces (in the standard setting of archimedean complex, harmonic and functional analysis). 
The resulting Toeplitz algebra is the $C^*$-algebra $\mathcal T$ which is represented on a suitable Hilbert space $\mathcal H$ (an infinite tensor product of Bergman spaces, one space for each `circle' in the underlying ad\`elic infinite dimensional torus representing the fractal membrane or, equivalently, one space for each interval of the original fractal string being `quantized'). 

Then, the noncommutative space representing the given (archimedean) fractal membrane in a `spectral triple'
\begin{equation}\label{spectral triple}
\mathcal {ST}=\{\mathcal A, \mathcal H, \mathcal D\}, 
\end{equation}  
where $\mathcal D$ is a suitable Dirac-type operator acting on $\mathcal H$.
(Here, $\mathcal D$ is a specific unbounded self-adjoint operator with compact resolvents and bounded commutators with the elements of a suitable dense subalgebra of $\mathcal A$.) We refer to [\ref{Lap2}, \S4.2.1] for an outline of the construction of $\mathcal {ST}$ and to [\ref{LapNe}] for a precise description. 

From our present perspective, the challenge alluded to at the beginning of this subsection consists in obtaining an appropriate nonarchimedean counterpart of the above construction, and thereby of the notion of a fractal membrane.\footnote{It is mentioned in [\ref{Lap2}, \S2.5 and \S5.6] that a suitable $p$-adic, and even ad\`elic, extension of fractal membranes (and their moduli space) could be potentially very useful for the theory.} (Various aspects of this problem are closely connected with the problems discussed in \S \ref{adelic} and \S \ref{berkovich} above.)

At a more modest (but already quite nontrivial level), we must begin by obtaining appropriate nonarchimedean analogs of classical notions in archimedean harmonic and functional analysis, including especially Bergman spaces [\ref{DS}], as well as Toeplitz operators and the associated Toeplitz algebras [\ref{BS}]. 
As we suggested in \S\ref{berkovich} for different, but related reasons, it would likely be helpful in this context to work with nonarchimedean Berkovich-type spaces [\ref{Ber}, \ref{Duc}] rather than with the traditional $p$-adic spaces. 
It does not seem that much information is available on this subject in the literature on nonarchimedean functional analysis and operator theory, 
 but it would be certainly be interesting to investigate aspects of this problem in the future. We invite the interested reader to do so as well. \\


\bibliographystyle{amsplain}

\end{document}